\newtheorem{theorem}{Theorem}[section]
\newtheorem{corollary}[theorem]{Corollary}
\newtheorem{lemma}[theorem]{Lemma}
\newtheorem{proposition}[theorem]{Proposition}
\theoremstyle{definition}
\newtheorem{definition}[theorem]{Definition}
\newtheorem{remark}[theorem]{Remark}
\numberwithin{equation}{section}
\newcommand{\Q}{\mathbb{Q}}
\newcommand{\Z}{\mathbb{Z}}
\newcommand{\C}{\mathbb{C}}
\newcommand{\N}{\mathbb{N}}
\newcommand{\abs}[1]{\left|#1\right|}
\newcommand{\ord}{\operatorname{ord}}
\newcommand{\End}{\operatorname{End}}
\newcommand{\Norm}{\operatorname{Norm}}
\newcommand{\Gal}{\operatorname{Gal}}
\newcommand{\Ker}{\operatorname{Ker}}
\newcommand{\Aut}{\operatorname{Aut}}
\newcommand{\Res}{\operatorname{Res}}
\newcommand{\lcm}{\operatorname{lcm}}
\author{Matteo Verzobio}
\title{Primitive divisors of elliptic divisibility sequences 
	with $j=1728$}
\date{}
\begin{document}
	
	\maketitle
	
		\renewcommand{\thefootnote}{}
	
	\footnote{2010 \emph{Mathematics Subject Classification}: Primary 11G05, 11B39; Secondary 11A41, 11D59, 11G07, 11G50.}
	
	\footnote{\emph{Key words and phrases}: Elliptic curves, primitive divisors, elliptic divisibility sequences.}
	
	\renewcommand{\thefootnote}{\arabic{footnote}}
	\setcounter{footnote}{0}

		\begin{abstract}
		Take a rational elliptic curve defined by the equation $y^2=x^3+ax$ in minimal form and consider the sequence $B_n$ of the denominators of the abscissas of the iterate of a non-torsion point; we show that $B_{5m}$ has a primitive divisor for every $m$. Then, we show how to generalize this method to the terms in the form $B_{mp}$ with $p$ a prime congruent to $1$ modulo $4$. 
		\end{abstract}

	\section{Introduction}
	\begin{definition}
		A sequence of integers $(x_n)_{n\in \N}$ is a divisibility sequence if
		\[
		m \mid n \implies x_m\mid x_n.
		\]  
		Given a sequence of integers $(x_n)_{n\in \N}$, we say that the $n$-th term has a primitive divisor if there exists a prime $p$ such that
		\[
		p\mid x_n  \text{  and  } p\nmid x_1\cdot x_2\cdots x_{n-1}.
		\]
	\end{definition}
	\begin{definition}
		Take an elliptic curve $E$, defined over $\Q$. Consider $P$ a rational non-torsion point on $E$ and take
		\[
		x(nP)=\frac{A_n}{B_n} \text{   with   } (A_n,B_n)=1 \text{  and  }B_n>0.
		\]
		We will say that the sequence of positive integers $\{B_n\}_{n\in \N}$ is an elliptic divisibility sequence. The sequence of the $B_n$ depends on $E$ and $P$ and we will sometimes denote it with $B_n(E,P)$.
	\end{definition}
	Thanks to \cite[Proposition 10]{silverman}, we know that for every elliptic curve in minimal form and for every non-torsion point $P\in E(\Q)$, $B_n(E,P)$ has a primitive divisor for $n$ large enough. Computational evidence suggests that it is only when $n$ is very small that $B_n$ does not have a primitive divisor. As far as I know, the example of the $B_n(E,P)$ without a primitive divisor for the largest $n$ for $E$ in minimal form is at $n=39$ and it is given at the beginning of page 476 of \cite{ingram2}. Given the curve $E$ defined by the equation
	\[
	y^2+xy+y=x^3+x^2-125615x+61201397
	\]
	and $P=(7107,-602054)$, $B_{39}(E,P)$ does not have a primitive divisor.
	For some classes of curves, there are some effectivity results. For example, in \cite{everestward} it is proved that, if $E$ has a non-trivial rational $2$-torsion point, then $B_n$ has a primitive divisor for $n$ even and greater than an effective computable constant. Also in \cite[Theorem 2.2]{everestward}, an unconditional result for primitive divisors of elliptic divisibility sequences associated with elliptic curves of the form $y^2=x^3-T^2x$ is obtained. The work in \cite{yabutavoutier} both improves and generalizes this result, proving that, if $E$ is defined by $y^2=x^3+ax$ with $a$ fourth-power-free integer, then the sequence of the $B_n$ has a primitive divisor for every $n\geq 3$ even. 
	
	The first aim of this paper is to correct an error in the proof of this fact. In order to prove the main result of \cite{yabutavoutier}, it is necessary to show that $B_{5m}$ has a primitive divisor for every $m$. Yabuta and Voutier prove this in their Lemma 5.1. In the proof of this lemma there is a mistake at the end of page 181 that we want to fix. Putting $C_k=B_{mk}$, the authors assume that if $B_{5m}$ does not have a primitive divisor, then $C_5$ does not have a primitive divisor too. However, this is not necessarily true. It is possible that $C_5$ has a primitive divisor which divides $B_n$ for some $n\mid 5m$ with $m \nmid n$. This means that their use of their Lemma 3.4 to obtain an upper bound for $\log B_{5m}$ is not correct. 
	 This same mistake also seems to affect the proof of Lemma 7 in \cite{ingram2}. We will fix this issue, proving the following.
		\begin{theorem}\label{Teo}
			Let $E_a$ be the elliptic curve defined by the equation \[y^2=x^3+ax\] with $a$ an integer fourth-power-free and $P$ be a non-torsion point in $E(\Q)$. Then, $B_n(E_a,P)$ has a primitive divisor, if $n$ is a multiple of $5$.
		\end{theorem}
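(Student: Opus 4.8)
The strategy is to adapt the Yabuta–Voutier argument for even indices, working with the auxiliary sequence $C_k = B_{mk}$, but to handle carefully the subtle point identified in the introduction: that $B_{5m}$ lacking a primitive divisor does \emph{not} directly force $C_5$ to lack one. First I would recall that, since $B_n(E_a,P)$ is a divisibility sequence, any prime dividing $B_{5m}$ either divides some $B_d$ with $d \mid 5m$, $d \neq 5m$, or is a primitive divisor. If $B_{5m}$ has no primitive divisor, then every prime $p \mid B_{5m}$ divides $B_{5m/5} = B_m$ or $B_{5m/p'}$ for some prime $p' \mid 5m$; in particular I want to bound the contribution to $\ord_p(B_{5m})$ coming from primes that do \emph{not} already divide $B_m$. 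The classical theory of formal groups on elliptic curves (the elliptic analogue of lifting-the-exponent) gives, for $p \nmid B_m$ but $p \mid B_{5m}$, that $p = 5$ is forced or that $5 \mid p \pm 1$ type congruences hold and $\ord_p(B_{5m}) = \ord_p(B_{m \cdot r})$ is controlled; the upshot is a clean inequality of the shape $\log B_{5m} \le \log B_m + (\text{small explicit error depending on } C_1,\dots,C_5 \text{ and a bounded power of }5)$.

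**Key steps in order.** (1) Set $C_k = B_{mk}$ and record that $\{C_k\}$ is itself an elliptic divisibility sequence for the point $mP$; invoke the height machinery to get the canonical-height asymptotics $\log B_n = \hat h(P)\, n^2 + O(1)$ with explicit $O(1)$, hence $\log C_k = \hat h(mP) k^2 + O(1) = m^2 \hat h(P) k^2 + O(1)$. (2) Prove the correct divisibility bookkeeping: if $q$ is a prime with $q \mid B_{5m}$ and $q \nmid \prod_{n<5m} B_n$ fails, trace exactly which smaller $B_n$ it can divide, separating the case $n \mid m$ (absorbed into $C_1 = B_m$) from the case $n \mid 5m$, $n \nmid m$ (so $5 \mid n$, $n = 5m/t$ with $t \mid m$, $t>1$), and use the formal-group valuation formula to show such $q$ contributes at most $\ord_q(C_5/\gcd) \le$ (bounded) — this is precisely where the Yabuta–Voutier argument must be repaired. (3) Combine (1) and (2) into an inequality $\hat h(P)\,(5m)^2 \le \hat h(P)\, m^2 + (\text{explicit constant}) + \kappa \log 5$ for an absolute $\kappa$, which forces $\hat h(P)$, equivalently $m$, to be small. (4) Resolve the finitely many remaining $(E_a, P, m)$ by the arithmetic of $y^2 = x^3+ax$ directly — using the $2$-descent / quartic-twist structure special to $j = 1728$, the factorization of $B_n$ via the $\mathbb Z[i]$-multiplication, and if necessary a finite computer check of the bounded range, exactly as in the even case.

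**Main obstacle.** The crux is step (2): giving a valid replacement for the flawed deduction on page 181 of \cite{yabutavoutier}. Concretely, I must show that the ``extra'' primes — those dividing $B_{5m}$ but coming in through an index $n$ with $5 \mid n$ and $m \nmid n$ — cannot conspire to make $\log B_{5m}$ large, i.e. I need a genuine upper bound on $\sum_{q}\ord_q(B_{5m})\log q$ over exactly that set of primes. I expect to get this from a careful application of the formal-group / Ingram-style valuation lemma (the elliptic ``lifting the exponent''), which says that for $q \nmid B_d$ one has $\ord_q(B_{dk}) = \ord_q(k) + \ord_q(B_d')$-type identities, so that a prime entering at index $5m/t$ has $q$-adic valuation in $B_{5m}$ that is either $0$ or pinned down by $\ord_q(5)$ and by $\ord_q$ of a term we have already counted. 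Making the error term fully explicit (so that step (4) has a concrete finite range) and verifying the $j=1728$-specific descent input are the remaining technical burdens, but they are routine relative to this repair.
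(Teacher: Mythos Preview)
Your step (2)/(3) contains a genuine gap. The ``extra'' primes --- those $q$ whose entry point $d_q$ divides $5m$, satisfies $5\mid d_q$, and $d_q<5m$ --- do \emph{not} contribute boundedly to $\log B_{5m}$. For each prime $t\mid m$ the primitive prime divisors of $B_{5m/t}$ are all of this type, and their total contribution to $\log B_{5m}$ is essentially $\log B_{5m/t}\sim 2(5m/t)^2\hat h(P)$, which grows with $m$. Summing over $t$ gives a term of size $2(5m)^2\bigl(\rho(5m)-\tfrac{1}{25}\bigr)\hat h(P)$, not $O(1)$. Consequently the inequality $\hat h(P)(5m)^2\le \hat h(P)m^2+O(1)$ in step~(3) is simply false; no formal-group lifting-the-exponent identity will produce it, because those identities control each $\ord_q$ individually, not the sum over all such $q$.

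The paper's repair is cleaner and sidesteps the $C_k$-sequence entirely. The point is that Yabuta--Voutier's own Lemma~3.4 (here Lemma~\ref{yv}) already gives a valid upper bound for $\log B_n$ under the hypothesis that $B_n$ has no primitive divisor \emph{in the original sequence}: $\log B_n\le 2\log n+2n^2\rho(n)\hat h(P)+2C\omega(n)$. There is no need to transfer the hypothesis to $C_5$. Combining this with the lower bound $\log B_{5m}\ge 18m^2\hat h(P)-O(\log|a|)$ coming from the reducibility of $\psi_5$ (Lemma~\ref{ndp}) yields $2n^2\hat h(P)\bigl(\tfrac{9}{25}-\rho(n)\bigr)\le O(\log|a|+\log n)$; since $\tfrac{9}{25}-\rho(n)>0$ when $2\nmid n$, this bounds $n$. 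The even case needs a separate lower bound via the duplication formula. After reducing to square-free $n$ (Lemma~\ref{rad}), only $n\in\{5,10,15,30,35\}$ remain. Your step~(4) is also too optimistic here: $n=30$ in particular requires substantive work --- the paper proves $\Psi_{30}(X,Y)\mid 2^{434}$ via a careful analysis of the sequence $g_n=\gcd(\phi_n,\psi_n^2)$ and then solves the resulting Thue equations --- not merely a finite computer check over bounded $(a,P,m)$, since $a$ is not yet bounded at that stage.
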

	Observe that, up to isomorphism over $\Q$, every elliptic curve in minimal form and with $j$-invariant equal to $1728$ is defined by the equation $y^2=x^3+ax$ with $a$ an integer fourth-power-free.

Finally, we show how to generalize the proof of Theorem \ref{Teo} to the case when we consider the terms in the form $B_{mp}$, for $p\equiv 1\mod 4$, proving the following theorem.
\begin{theorem}\label{thm2}
	Let $E_a$ be as before and $P$ be a non-torsion point in $ E_a(\Q)$. Take a prime $p$ congruent to $1$ modulo $4$. Then $B_n(E_a,P)$ has a primitive divisor if $n$ is square-free, $n>p$ and $p$ is the smallest divisor of $n$.
\end{theorem}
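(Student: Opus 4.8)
The plan is to adapt the proof of Theorem~\ref{Teo}, with the split prime $p$ playing the role of $5$, and---crucially, in order to avoid the gap in \cite{yabutavoutier}---to work with the whole divisor lattice of $n$ rather than with a subsequence $(B_{mk})_{k}$. Write $n=mp$; the hypotheses say that $m>1$, that $m$ is squarefree and coprime to $p$, and that every prime divisor of $m$ exceeds $p$. Suppose for contradiction that $B_n = B_n(E_a,P)$ has no primitive divisor, and set $\Phi_n := \prod_{d\mid n} B_d^{\,\mu(n/d)}$, so that $\prod_{d\mid n}\Phi_d = B_n$.

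The first step is to convert ``no primitive divisor'' into an upper bound for $\log\Phi_n$. For a prime $q\ge 5$ of good reduction for $E_a$, let $r_q$ be its rank of apparition in $(B_k)$; then $q\mid B_d$ iff $r_q\mid d$, and $v_q(B_{r_q k}) = v_q(B_{r_q}) + v_q(k)$. Feeding this into the Möbius sum defining $\Phi_n$ and using that $n$ is squarefree, one finds $v_q(\Phi_n) = v_q(B_n)$ when $r_q = n$ (i.e. $q$ is primitive), $v_q(\Phi_n) = 1$ when $r_q = n/q$, and $v_q(\Phi_n) = 0$ otherwise. Hence every non-primitive prime $q\ge 5$ of good reduction occurring in $\Phi_n$ divides $n$, with exponent $1$. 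Bounding the contribution of the remaining primes (the primes $2$ and $3$ and the finitely many primes of bad reduction, whose exponents in $\Phi_n$ are $\le\kappa_a$ for a constant $\kappa_a$ depending mildly on $a$), the assumption forces
\[ \log\Phi_n \le \log n + \kappa_a . \]

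The second step produces a contradiction through canonical heights. As in the proof of Theorem~\ref{Teo} one has, for every $k\ge 1$,
\[ \bigl|\log B_k - 2k^2\hat h(P)\bigr| \le c_1\log k + c_2 \]
with explicit $c_1,c_2$; the delicate inequality is the lower bound, which is equivalent to controlling the archimedean local height $\hat\lambda_\infty(kP)$, i.e. to preventing $kP$ from approaching the identity $O$ too fast. This is precisely where the complex multiplication by $\Z[i]$, and the splitting $p = \pi\overline\pi$ forced by $p\equiv 1 \bmod 4$, are used, exactly as for $p=5$. Applying Möbius inversion to the approximate formula gives $\log\Phi_n = 2\hat h(P)\,J_2(n) + O\!\bigl(\tau(n)(c_1\log n + c_2)\bigr)$, where $J_2(n) = \prod_{\ell\mid n}(\ell^2-1) = n^2\prod_{\ell\mid n}(1-\ell^{-2})$. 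Since every prime dividing $n$ is $\ge p\ge 5$, we get $J_2(n) \ge n^2\prod_{\ell\ge 5}(1-\ell^{-2}) > \tfrac{9}{10}\,n^2$, hence
\[ \log\Phi_n \ge \tfrac{9}{5}\,\hat h(P)\,n^2 - O\!\bigl(\tau(n)\log n\bigr) . \]
Comparing the two displayed bounds for $\log\Phi_n$, and invoking an effective positive lower bound for $\hat h(P)$ valid on elliptic curves with $j=1728$, we reach a contradiction for every $n$ above an explicit bound $N_0$.

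It remains to handle the finitely many squarefree integers $n$ with smallest prime factor $p\equiv 1\bmod 4$ and $p < n\le N_0$. For these I would argue directly using the $\pi$-isogeny attached to $p = \pi\overline\pi$: the $p$-division polynomial of $E_a$ factors through $\Z[i]$, which lets one write the ``new'' part of $B_{mp}$ as a norm from $\Q(i)$ and then verify the existence of a primitive divisor by a finite computation, again following the treatment of Theorem~\ref{Teo}. I expect the conceptual heart of the proof to lie in the first step---dealing with all divisors of $n$ at once through $\Phi_n$, which is exactly where \cite{yabutavoutier} slipped---while the main practical obstacle should be quantitative: making $c_1, c_2$, and hence $N_0$, small and uniform enough in $a$ that the residual list of cases is genuinely finite and checkable. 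That archimedean estimate is also the reason the hypothesis $p\equiv 1\bmod 4$ cannot be dropped in this approach.
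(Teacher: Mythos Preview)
Your Step~1 (the M\"obius product $\Phi_n$ and the bound $\log\Phi_n\le\log n+\kappa_a$ under the no-primitive-divisor hypothesis) is fine and is indeed close in spirit to how the paper handles the small cases such as $n=30$ via Lemma~\ref{x}. The gap is in Step~2. You assert that
\[
\bigl|\log B_k-2k^2\hat h(P)\bigr|\le c_1\log k+c_2
\]
for every $k\ge 1$, and that the lower half of this is obtained ``exactly as for $p=5$'' via CM. Neither claim is correct. For $p=5$, Lemma~\ref{ndp} gives $\log B_{5m}\ge 18m^2\hat h(P)-C_a$, i.e.\ leading coefficient $18$, not $2(5m)^2/m^2=50$; and for general $p\equiv 1\pmod 4$ the paper only obtains (Proposition~\ref{Bmk} with Lemma~\ref{2p-2})
\[
\log B_{mp}\ \ge\ 2(2p-2)\,m^2\hat h(P)-C_{a,p},
\]
that is, leading coefficient $\tfrac{2(2p-2)}{p^2}(mp)^2$, far below $2(mp)^2$. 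The mechanism is \emph{not} a bound on $\hat\lambda_\infty(kP)$: CM is used to factor $\psi_p^2=f\cdot g$ over $\Z[x]$ with $d:=\min(\deg f,\deg g)=2p-2$ (Lemma~\ref{2p-2}), and then a root-separation estimate (Lemmas~\ref{rootsep}, \ref{red}, \ref{Kk}) gives $\psi_p^2(A_m,B_m)\ge (K_p|a|^{-1/2}/2)^{d}H(mP)^{d}$, whence the displayed lower bound on $\log B_{mp}$. An effective inequality $\hat\lambda_\infty(kP)\le c_1\log k+c_2$ for all $k$ would amount to an effective elliptic-logarithm lower bound of strength well beyond what is available (David's bounds are polylogarithmic in $\log k$, not linear), and the paper neither proves nor uses anything of the sort.

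Because the lower bound actually achieved has leading coefficient $\tfrac{2p-2}{p^2}$ rather than $1$, the contradiction is \emph{not} with $J_2(n)\ge \tfrac{9}{10}n^2$, but with the much finer comparison
\[
\frac{2p-2}{p^2}\ >\ \rho(n)=\sum_{\ell\mid n}\frac{1}{\ell^2},
\]
which is precisely Lemma~\ref{0.8} and depends essentially on the hypothesis that the smallest prime factor of $n$ is $p$. Your $\Phi_n$/M\"obius framework would need two-sided bounds on $\log B_d$ for \emph{every} divisor $d\mid n$, but the CM-factorization trick only yields the weak lower bound above for those $d$ divisible by $p$, and nothing for the other $d$; so the Step~2 estimate cannot be assembled as you propose. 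The paper instead compares the single lower bound for $\log B_{mp}$ directly against the upper bound of Lemma~\ref{yv}, obtaining~\eqref{eq:10} and then~(\ref{m}), which fails for all but a short explicit list of $n$ that are then dispatched by computing $K_p$ and re-substituting into~\eqref{eq:10}.
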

In \cite[Remark 1.5]{yabutavoutier}, it is conjectured that for every sequence $B_n(E_a,P)$, every term has a primitive divisor for $n\geq4$. The work of this paper made one step forward in order to prove this conjecture. 
\section{Preliminaries}
		 We start by recalling the hypothesis of \cite{yabutavoutier} and the facts that we will use. Let $a$ be a fourth-power-free integer and $E_a$ be the elliptic curve defined by the equation $y^2=x^3+ax$. We will denote by $\Delta$ the discriminant of the curve, that is $\Delta:=-64a^3$. We define the height of a rational number as
		 \[
		 H\Big(\frac uv\Big)=\max\{\abs{u},\abs{v}\}
		 \]
		  if $u$ and $v$ are coprime and the logarithmic height as
		 \[
		 h\Big(\frac uv\Big)=\log H\Big(\frac uv\Big).
		 \]
		 Given $P\in E(\Q)$, we define $H(P)=H(x(P))$ and $h(P)=h(x(P))$. We consider the canonical height of a point as defined in \cite[Proposition VIII.9.1]{arithmetic}. Observe that if $P$ is in $E_a(\Q)$ and $x(P)=u/v$ with $u$ and $v$ two coprime integers, then
		 \[
		 y^2=x^3+ax=\frac{u^3+auv^2}{v^3}.
		 \]
		 Since $(u^3+auv^2,v^3)=1$, then $v^3$ is a square and therefore also $v$ it is. In particular, every term of the sequence of the $B_n(E,P)>0$ is a square. Moreover, if $A_n=0$, then $x(nP)=0$ and so $y(nP)^2=x(nP)^3+ax(P)=0$. If $y(nP)=0$, then $2nP=O$ thanks to \cite[III.2.3]{arithmetic} and this is absurd since we are assuming that $P$ has infinite order. So, we have $A_n\neq 0$ for every $n\geq 1$ and in particular $\abs{A_n}\geq 1$.
		 \begin{lemma}\label{C}
		 Let \[C=0.26+\frac{\log \abs{a}}4.\] Then,
		\[
		\abs{h(P)-2\hat{h}(P)}\leq 2C
		\]
		for every $P$ in $E_a(\Q)$. 
	\end{lemma}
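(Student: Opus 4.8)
The inequality is an explicit version of the standard comparison between the naive height $h$ and the canonical height, for the family $E_a$. Since the canonical height of \cite[Proposition VIII.9.1]{arithmetic} satisfies $\hat h(P)=\tfrac12 h(P)+O(1)$, the assertion $|h(P)-2\hat h(P)|\le 2C$ is equivalent to $|\hat h(P)-\tfrac12 h(P)|\le C$, and I would prove this last inequality. The tool is the decomposition of the canonical height into local N\'eron heights,
\[
\hat h(P)=\hat\lambda_\infty(P)+\sum_{p}\hat\lambda_p(P),
\]
compared place by place with the decomposition $h(P)=\log^{+}|x(P)|_\infty+\sum_p\max\{0,-\ord_p(x(P))\}\log p$ of the naive height.

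The crucial structural fact is that $E_a$ has good reduction away from $2$ and the primes dividing $a$: its discriminant is $\Delta=-64a^{3}$, and since $a$ is fourth-power-free the equation $y^{2}=x^{3}+ax$ is a global minimal Weierstrass model (for $p\ge 5$ this is immediate from $p^{4}\nmid a$, while $p=2,3$ need only a short direct verification). At a prime $p\nmid 2a$ the local canonical height equals $\tfrac12\max\{0,-\ord_p(x(P))\}\log p$ exactly, so those terms cancel against the corresponding terms of $h(P)$, and only the archimedean place and the primes dividing $2a$ survive in $\hat h(P)-\tfrac12 h(P)$. For each such finite prime I would bound $\bigl|\hat\lambda_p(P)-\tfrac12\max\{0,-\ord_p(x(P))\}\log p\bigr|$ by running Tate's algorithm at $p$: the reduction is additive and potentially good (because $j=1728$), the Kodaira type is controlled by $\ord_p(a)\in\{1,2,3\}$ (for $p\ge 5$ the types are $\mathrm{III}$, $\mathrm{I}_0^{*}$, $\mathrm{III}^{*}$; $p=2$ and $p=3$ are treated separately), and for each type the local correction takes only finitely many explicit values, each bounded by a fixed multiple of $\ord_p(\Delta)\log p$. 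Since $\ord_p(\Delta)=6\,[p=2]+3\,\ord_p(a)$, summing these bounds over $p\mid\Delta$ produces the term $\tfrac14\log|a|$ of $C$ together with a bounded $2$-adic contribution.

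Finally, for the archimedean place I would estimate $\bigl|\hat\lambda_\infty(P)-\tfrac12\log^{+}|x(P)|_\infty\bigr|$ via the usual series/algorithm for the real local height applied to the duplication formula $x(2P)=(x(P)^{2}-a)^{2}/\bigl(4(x(P)^{3}+ax(P))\bigr)$; over $\R$ the substitution $x=\sqrt{|a|}\,X$, $y=|a|^{3/4}Y$ carries $E_a$ to one of the two fixed curves $Y^{2}=X^{3}\pm X$, which makes this estimate uniform once the effect of the scaling on both $\hat\lambda_\infty$ and $\log^{+}|x|$ is accounted for. Collecting the finite and the archimedean contributions and optimising the numerical constants then gives $|\hat h(P)-\tfrac12 h(P)|\le 0.26+\tfrac14\log|a|$, which is the lemma. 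The main obstacle I anticipate is this last piece of bookkeeping: the individual local corrections each carry $\log|a|$-sized pieces, so a crude triangle inequality over all places overshoots the coefficient $\tfrac14$ and the constant $0.26$; one has to use the precise values of the corrections and the relations forced by the product formula on $\Delta=-64a^{3}$ so that the $\log|a|$-terms recombine into exactly $\tfrac14\log|a|$. One must also dispose separately of the finitely many points where this analysis degenerates, namely the $2$-torsion point $(0,0)$ (where $h(P)=\hat h(P)=0$) and the points $(\pm\sqrt{-a},0)$ when $-a$ is a square (where $\hat h(P)=0$ and $h(P)=\tfrac12\log|a|\le 2C$); for these the inequality is trivial.
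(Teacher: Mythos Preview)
The paper does not prove this lemma at all: its entire proof is the single sentence ``This is proved in \cite[Theorem 1.4]{yabutavoutier2}.'' So there is nothing to compare at the level of argument; the paper simply imports the bound from Voutier and Yabuta's companion paper on height differences for this family.

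Your proposal is a sketch of the local-heights approach that is, in fact, the method used in the cited reference: decompose $\hat h$ into N\'eron local heights, observe that primes of good reduction contribute nothing to $\hat h(P)-\tfrac12 h(P)$, bound the non-archimedean corrections at $p\mid 2a$ via the Kodaira type (which for $j=1728$ is determined by $\ord_p a$), and handle the archimedean place by the rescaling $x\mapsto |a|^{1/2}X$ that reduces to the two fixed curves $Y^2=X^3\pm X$. This is the right architecture, and what it buys over a bare citation is an actual explanation of where the $\tfrac14\log|a|$ comes from.

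That said, what you have written is a plan rather than a proof. You yourself identify the obstacle: a naive triangle inequality over the bad places gives a coefficient larger than $\tfrac14$ in front of $\log|a|$ and a constant larger than $0.26$, and the precise recombination requires the explicit Cremona--Prickett--Siksek / Silverman-type formulas for the local corrections at each Kodaira type together with the archimedean Tate series estimate on $Y^2=X^3\pm X$. None of those computations are carried out here, and the sharp constants are exactly the content of \cite[Theorem 1.4]{yabutavoutier2}. So as it stands your write-up would not be accepted as a proof of the lemma; it is a correct outline of how the cited result is obtained, but the numerical work that pins down $0.26$ and $\tfrac14$ is missing.
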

\begin{proof}
	This is proved in \cite[Theorem 1.4]{yabutavoutier2}.
\end{proof}
			\begin{lemma}\label{ndp}
			Let $P\in E_a(\Q)$ be a point of infinite order. Consider the elliptic divisibility sequence $(B_n)=(B_n (E_a, P))$. Then,
			\[
			\log B_{5m}\geq 18m^2\hat{h}(P)-29\log \abs{a}-32.863.
			\]
		\end{lemma}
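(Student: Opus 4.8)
The plan is to bound $\log B_{5m}$ by combining the exact identity $\hat h(5mP)=25m^{2}\hat h(P)$ with Lemma~\ref{C} and with the way the denominator of $x(5mP)$ is manufactured from that of $x(mP)$. Writing $5mP=5\cdot(mP)$ and using the division polynomials of $E_a$, one has
\[
x(5mP)=\frac{\widetilde{\phi_5}(A_m,B_m)}{B_m\,\widetilde{\psi_5}(A_m,B_m)^{2}},
\]
where $\psi_5$ is the fifth division polynomial (a polynomial in $x$ of degree $12$ and leading coefficient $5$), $\phi_5=x\psi_5^{2}-\psi_4\psi_6$ (monic of degree $25$), and $\widetilde{\,\cdot\,}$ denotes homogenization in $(A_m,B_m)$. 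Since $\phi_5$ is monic, $\gcd(\widetilde{\phi_5}(A_m,B_m),B_m)=1$; and since $\phi_5$ and $\psi_5$ are coprime as polynomials (separability of $[5]$), their resultant is, by the weight grading in which $x$ has weight $1$ and $a$ weight $2$, a nonzero monomial $Ka^{150}$. Hence the cancellation in the displayed fraction divides a fixed power of $a$, and one gets
\[
\log B_{5m}\ \ge\ \log B_m+2\log\abs{\widetilde{\psi_5}(A_m,B_m)}-C_1\log\abs{a}-C_2
\]
for explicit absolute constants $C_1,C_2$.

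It then remains to bound $\abs{\widetilde{\psi_5}(A_m,B_m)}=B_m^{12}\,\abs{\psi_5\!\left(x(mP)\right)}$ from below, which I would do by cases on the size and location of $x(mP)$. If $\abs{x(mP)}$ exceeds a threshold depending only on $a$, the leading term of $\psi_5$ dominates and $\abs{\widetilde{\psi_5}(A_m,B_m)}\ge\tfrac52\abs{A_m}^{12}$; here $\abs{A_m}\ge B_m$, so $\log\abs{A_m}=h(mP)\ge 2m^{2}\hat h(P)-2C$ by Lemma~\ref{C}, and one lands well above the required bound. If instead $x(mP)$ is bounded but not close to any root of $\psi_5$, then $x(5mP)$ is also bounded, hence $B_{5m}\gg_a\abs{A_{5m}}$ and $\log B_{5m}\ge h(5mP)-O(\log\abs{a})\ge 50m^{2}\hat h(P)-O(\log\abs{a})$, again by Lemma~\ref{C} applied to $5mP$. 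The delicate case is when $x(mP)$ lies close to the abscissa of a nontrivial $5$-torsion point, i.e. when $\psi_5(x(mP))$ is small: there one only knows $\abs{\widetilde{\psi_5}(A_m,B_m)}\ge 1$, and the lower bound on $\log B_{5m}$ must instead be read off from $h(5mP)=50m^{2}\hat h(P)+O(1)$ together with the structural facts from the Preliminaries — that every $B_n$ is a perfect square and that $A_n(A_n^{2}+aB_n^{2})$ is a perfect square, which forces $A_n=\pm u_n^{2}w_n$ with $w_n$ squarefree and $w_n\mid\operatorname{rad}(a)$, and hence $A_n^{2}+aB_n^{2}=\pm w_nk_n^{2}$. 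Plugging these into the size estimates for $5mP$ — the point whose abscissa has a huge numerator and a comparatively small denominator — is the technical heart of the argument, and is where a coefficient like the stated $18$ is produced (the easy cases all give a larger coefficient).

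Throughout I would keep the constants explicit: $2C=0.52+\tfrac12\log\abs{a}$ from Lemma~\ref{C}, the resultant constant $K$ and the threshold for $\abs{x(mP)}$, and the substitution $\hat h(5mP)=25m^{2}\hat h(P)$, which together account for the stated $29\log\abs{a}$ and $32.863$. An essentially equivalent route to the case analysis is to push $5mP$ through the $2$-isogeny $\phi\colon E_a\to E_{-4a}$, $(x,y)\mapsto(y^{2}/x^{2},\,y(x^{2}-a)/x^{2})$: then $X(\phi(5mP))=(A_{5m}^{2}+aB_{5m}^{2})/(A_{5m}B_{5m})$ has denominator essentially $A_{5m}B_{5m}$, and Lemma~\ref{C} applied to a minimal model of $E_{-4a}$, together with $\hat h_{E_{-4a}}(\phi(5mP))=2\hat h_{E_a}(5mP)$, disposes of every configuration except the one where $\abs{A_{5m}}$ dominates every other term; that configuration again reduces to the Diophantine analysis of $A_{5m}^{2}+aB_{5m}^{2}=\pm w\,k^{2}$. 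I expect exactly this last case — lower-bounding $B_{5m}$ when $x(5mP)$ has a large numerator and a small denominator — to be the only genuine obstacle; everything else is bookkeeping with Lemma~\ref{C}.
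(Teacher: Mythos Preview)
The proposal correctly identifies the overall structure --- write $B_{5m}$ as $B_m\psi_5^2(A_m,B_m)$ divided by a gcd dividing a power of $a$, then bound $|\psi_5(A_m,B_m)|$ from below --- but the case you yourself flag as ``the only genuine obstacle'' is exactly the one your argument does not close. When $x(mP)$ is near the abscissa of a nontrivial $5$-torsion point, $\psi_5(x(mP))$ can be arbitrarily small, and neither of your two suggested escapes works. Reading the bound off $h(5mP)$ fails because in precisely this regime $|A_{5m}|$ carries all of $h(5mP)$ while $B_{5m}$ stays comparatively small, so $h(5mP)\approx 50m^2\hat h(P)$ says nothing about $\log B_{5m}$. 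Pushing through the $2$-isogeny lands you, as you acknowledge, in the same configuration: $X(\phi(5mP))$ has numerator $\approx A_{5m}^2$ and denominator $\approx A_{5m}B_{5m}$, so again the height is carried by the numerator. The Diophantine bookkeeping you invoke ($A_n=\pm u_n^2w_n$ with $w_n\mid\operatorname{rad}(a)$, and $A_n^2+aB_n^2=\pm w_nk_n^2$) records the square structure but produces no quantitative lower bound on $B_{5m}$ in terms of $m^2\hat h(P)$.

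The idea that actually closes this gap --- and which is the content of the cited inequality in \cite[Lemma~5.1]{yabutavoutier}; the present paper does not reprove it --- is that for $j=1728$ the polynomial $\psi_5$ is \emph{reducible} over $\Z$: since $5\equiv 1\bmod 4$ splits in $\End(E_a)\cong\Z[i]$, one has $\psi_5^2=f\cdot g$ with $f,g\in\Z[x]$ coprime (this is the $p=5$ instance of the mechanism developed later in Lemmas~\ref{2p-2}, \ref{split}, \ref{red} and Proposition~\ref{Bmk}). Now the delicate case evaporates: if $x(mP)$ is close to a root of one factor, a root-separation bound of the form $K_5\,|a|^{-1/2}$ keeps it uniformly far from every root of the other, so that factor is at least $H(mP)^{d}\cdot c(a)$ with $d=\min(\deg f,\deg g)$, while the remaining factor is a nonzero integer and hence $\geq 1$. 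It is this factorization, not any further analysis of $A_{5m}^2+aB_{5m}^2$, that converts $h(mP)\geq 2m^2\hat h(P)-2C$ into a lower bound $\log B_{5m}\geq 2d\,m^2\hat h(P)-c_1\log|a|-c_2$ and yields the explicit constants $18$, $29$, $32.863$. Without using the reducibility of $\psi_5$ your case analysis cannot be completed.
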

	\begin{proof}
		This was established in the proof of \cite[Lemma 5.1]{yabutavoutier}. See the inequality near the bottom of page 181 in \cite{yabutavoutier}.
	\end{proof}
	\begin{lemma}\label{h}
	Let $a$ be a fourth-power-free integer. For every non-torsion point $P$ in $E_a(\Q)$,
	\[
	\hat{h}(P)\geq \frac{\log\abs{a}-\log 4}{16}.
	\]
	\end{lemma}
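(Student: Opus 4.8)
First, if $|a|\le 4$ the right‑hand side is non‑positive and the inequality holds since $\hat{h}(P)\ge 0$; so I may assume $|a|\ge 5$.

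The plan is to exploit the rational $2$‑torsion point $T=(0,0)\in E_a(\Q)$ together with Lemma~\ref{C}. The key algebraic fact is that, for any $Q\in E_a(\Q)$ with $x(Q)\ne 0$, the line through $Q$ and $T$ is $y=\lambda x$, and substituting into $y^2=x^3+ax$ gives $x\bigl(x^2-\lambda^2 x+a\bigr)=0$; its three roots are $0,\ x(Q),\ x(Q+T)$, so by Vieta
\[
x(Q)\cdot x(Q+T)=a .
\]
Moreover the duplication formula reads $x(2P)=\dfrac{\bigl(x(P)^2-a\bigr)^2}{4\bigl(x(P)^3+ax(P)\bigr)}=\Bigl(\dfrac{x(P)^2-a}{2\,y(P)}\Bigr)^2$, so $x(2P)$ is a square in $\Q$: writing $\dfrac{x(P)^2-a}{2y(P)}=\dfrac{m}{n}$ in lowest terms, we get $x(2P)=m^2/n^2$, hence $A_2=m^2$ and $B_2=n^2$. (Here $2P\ne O$ and $2P+T\ne O$ because $P$ is non‑torsion, and $x(2P)\ne 0$ because $A_2\ne 0$ by the discussion preceding Lemma~\ref{C}, so $x(2P+T)=a/x(2P)$ is well defined.)

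The heart of the argument is to show that $m$, and therefore $A_2$, is coprime to $a$ (or at worst shares only a bounded power of $2$ with it). I would prove this prime by prime. Fix $p\mid a$. If $v_p(x(P))<0$, then $P$ reduces to $O$ at $p$, so $v_p(x(2P))<0$, forcing $p\mid n$ and $p\nmid m$. If $v_p(x(P))\ge 0$, a direct computation of $p$‑adic valuations using $y(P)^2=x(P)\bigl(x(P)^2+a\bigr)$ and $1\le v_p(a)\le 3$ (this is where fourth‑power‑freeness enters) pins down $v_p(x(P)^2-a)$ and $v_p(y(P))$ and shows $v_p\!\bigl(\tfrac{x(P)^2-a}{2y(P)}\bigr)\le 0$, whence $p\nmid m$. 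The prime $p=2$ is the genuinely delicate case — because of the extra factor $2$ in $2y(P)$ and $2$‑adic congruences — and I expect it to be the main obstacle; it is plausibly the reason the statement carries the (otherwise slack) term $-\log 4$. Granting $\gcd(a,A_2)=1$, the fraction $x(2P+T)=a\,n^2/m^2$ is already reduced, so $H(2P+T)=\max(|a|\,n^2,\ m^2)\ge |a|$, i.e. $h(2P+T)\ge\log|a|$.

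Finally, Lemma~\ref{C} applied at $2P+T$ gives $\hat{h}(2P+T)\ge\tfrac12 h(2P+T)-C\ge\tfrac12\log|a|-\bigl(0.26+\tfrac14\log|a|\bigr)=\tfrac14\log|a|-0.26$, and since $T$ is a torsion point, $\hat{h}(2P+T)=\hat{h}(2P)=4\hat{h}(P)$; dividing by $4$,
\[
\hat{h}(P)\ \ge\ \tfrac{1}{16}\log|a|-0.065\ \ge\ \frac{\log|a|-\log 4}{16},
\]
the last inequality because $0.065\le\tfrac{1}{16}\log 4$. Thus, modulo the coprimality lemma — and the careful $2$‑adic bookkeeping, where if $\gcd(a,A_2)$ turns out to equal $4$ one falls back on $\max\bigl(H(2P),H(2P+T)\bigr)$, or on $h(P)$ itself when $x(P)$ is large — the stated bound follows.
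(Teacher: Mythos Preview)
The paper does not give its own proof of this lemma: it simply cites \cite[Theorem~1.2]{yabutavoutier2}. So there is no in-paper argument to compare against, and your sketch is attempting strictly more than the paper does.

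Your approach via the $2$-torsion point $T=(0,0)$, the identity $x(Q)\,x(Q+T)=a$, and the fact that $x(2P)$ is a rational square is exactly the right circle of ideas---it is essentially the $2$-descent height computation underlying the cited result. Your treatment of odd primes $p\mid a$ is correct: a case analysis on $v_p(u)$ versus $v_p(a)\in\{1,2,3\}$ does yield $v_p(A_2)=0$.

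However, the gap you flag at $p=2$ is genuine and is \emph{not} absorbed by the $-\log 4$ term. Take $v_2(a)=2$, write $a=4a'$ with $a'$ odd, and suppose $x(P)=u/v$ with $v$ odd and $v_2(u)=1$, say $u=2u'$. For $s^2=u(u^2+av^2)=8u'(u'^2+a'v^2)$ to be a square one needs $v_2(u'^2+a'v^2)$ odd; when $a'\equiv 1\pmod 8$ this valuation is exactly~$1$, while $v_2(u'^2-a'v^2)\ge 3$. Hence $v_2(u^2-av^2)\ge 5$, $v_2(2sw)=3$, so $v_2(m)\ge 2$ and $\gcd(a,A_2)=4$. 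Plugging $d=4$ into your chain gives only
\[
\hat{h}(P)\ \ge\ \tfrac{1}{16}\log|a|-\tfrac{\log 4}{8}-0.065\ \approx\ \tfrac{1}{16}\log|a|-0.238,
\]
weaker than the stated bound by about $0.15$. Your proposed fallback to $\max\bigl(H(2P),H(2P+T)\bigr)$ does not close this: in the regime $n^2\le m^2\le |a|n^2$ one still gets only $\max\ge |a|/4$, and the constant $C$ from Lemma~\ref{C} consumes the remaining slack.

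This is consistent with Lemma~\ref{100} below, where it is noted that \cite[Theorem~1.2]{yabutavoutier2} in fact gives the stronger bound $\hat{h}(P)\ge(\log|a|+\log 16)/16$ whenever $a\not\equiv 4\pmod{16}$. The residue class $a\equiv 4\pmod{16}$ is precisely where your coprimality fails, and pushing the constant down to $-\log 4$ there requires a finer $2$-adic argument (local height contributions, or passing to the $2$-isogenous curve $E_{-4a}$) than your sketch supplies.
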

\begin{proof}
 This is proved in \cite[Theorem 1.2]{yabutavoutier2}.
\end{proof}
\begin{lemma}\label{rho}
	For every positive integer $n$ define
	\[
	\rho(n)=\sum_{p|n}\frac{1}{p^2}.
	\]
	Then,
	\[
	\rho(n)<\sum_{p \text{  prime  }}\frac{1}{p^2}<0.45225.
	\]
\end{lemma}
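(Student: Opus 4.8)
This is just the statement that $\rho(n) = \sum_{p \mid n} p^{-2}$ is bounded above by the prime zeta function value $P(2) = \sum_{p} p^{-2}$, together with a concrete numerical bound $P(2) < 0.45225$. Let me sketch how I'd prove each piece.

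The first inequality $\rho(n) < P(2)$ is immediate: the sum defining $\rho(n)$ ranges over the (finite) set of primes dividing $n$, each term $p^{-2}$ is positive, and $n$ is divisible by at least one prime but not by all primes (no integer is), so the sum over $p \mid n$ is a proper sub-sum of the sum over all primes, hence strictly smaller.

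For the numerical bound on $P(2) = \sum_p p^{-2}$, the plan is to split the sum into a finite head and a tail that is crudely estimated. Concretely, I would sum $p^{-2}$ over the primes up to some cutoff — say $p \le 100$ or so — by direct computation, then bound the tail $\sum_{p > N} p^{-2}$ by $\sum_{k > N} k^{-2} < \int_{N}^{\infty} t^{-2}\,dt = 1/N$, or slightly more sharply by $\sum_{k \ge N+1} k^{-2} < 1/N$. Adding the explicitly computed head to this tail bound, one checks that the total stays below $0.45225$. (For reference, $P(2) = 0.452247\ldots$, so the cutoff must be chosen large enough that the head already exceeds, and the head-plus-tail-bound still undershoots, $0.45225$; taking $N$ on the order of a few hundred and using the integral tail bound comfortably suffices, and one could instead invoke the closed-form-ish value $P(2) = \sum_{k\ge 1} \mu(k) \log \zeta(2k)/k$ if a cleaner derivation is wanted, but the head-plus-tail method is the most elementary.)

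The only mild subtlety — not really an obstacle — is being careful that the chosen cutoff genuinely makes the rounded-up head plus the tail bound land strictly below $0.45225$ rather than above it, since the true value is only about $3 \times 10^{-5}$ below the target; this is purely a matter of taking the cutoff large enough and keeping enough decimal places in the finite computation.
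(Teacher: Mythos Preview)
Your argument is essentially correct, and in fact the paper gives no proof at all here: it simply cites \cite{yabutavoutier}. Two minor corrections. First, for $n=1$ there are no prime divisors, so your phrase ``$n$ is divisible by at least one prime'' fails there, though of course $\rho(1)=0<P(2)$ is trivially true. Second, the margin is actually about $2.6\times 10^{-6}$ (not $3\times 10^{-5}$), and with the crude tail bound $\sum_{k>N}k^{-2}<1/N$ a cutoff of ``a few hundred'' is far too small: one checks that even $N=10^{5}$ still gives $\text{head}+1/N>0.45225$, and $N$ must be pushed toward $10^{6}$ for that estimate to close. The M\"obius--$\zeta$ identity $P(2)=\sum_{k\ge 1}\mu(k)\log\zeta(2k)/k$ you mention is the practical route, since it converges geometrically.
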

	\begin{proof}
		This was proved at the top of page 178 of \cite{yabutavoutier}.
	\end{proof}
\begin{lemma}\label{yv}
	Suppose that $B_n$ does not have a primitive divisor. Then,
	\[
	\log B_n\leq 2\log n+2n^2\rho(n)\hat{h}(P)+2C\omega(n),
	\]
	where $\omega(n)$ is the number of prime divisors of $n$.
\end{lemma}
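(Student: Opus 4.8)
The plan is to reduce the statement to an elementary divisibility between integers and then take logarithms, using Lemma~\ref{C} to pass from naive to canonical heights. The reduction I aim for is the following: if $B_n$ has no primitive divisor, then
\[
B_n \ \Big|\ n^{2}\prod_{q\mid n}B_{n/q},
\]
the product running over the prime divisors $q$ of $n$. Granting this, the lemma follows in a few lines. Taking logarithms gives $\log B_n\le 2\log n+\sum_{q\mid n}\log B_{n/q}$. For each prime $q\mid n$, the integer $B_{n/q}$ is the denominator of $x((n/q)P)$, so $B_{n/q}\le H((n/q)P)$ and hence $\log B_{n/q}\le h((n/q)P)$; combining this with Lemma~\ref{C} and with $\hat h((n/q)P)=(n/q)^{2}\hat h(P)$ gives $\log B_{n/q}\le 2(n/q)^{2}\hat h(P)+2C$. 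Summing over the $\omega(n)$ primes $q\mid n$ and using $\sum_{q\mid n}(n/q)^{2}=n^{2}\rho(n)$ then produces exactly $\log B_n\le 2\log n+2n^{2}\rho(n)\hat h(P)+2C\,\omega(n)$.

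To establish the divisibility I would argue prime by prime. Fix a prime $p$ with $p\mid B_n$. The sequence $(B_n)$ has the usual gcd property inherited from reduction theory: if $p\mid B_i$ and $p\mid B_j$, then $p\mid B_{\gcd(i,j)}$; at a prime of good reduction this simply says that the order $m_p$ of $\widetilde P$ in $\widetilde E(\mathbb{F}_p)$ divides both $i$ and $j$, and at the finitely many bad primes one adds the standard argument with the component group of the N\'eron model. Since $B_n$ has no primitive divisor there is some $j$ with $1\le j\le n-1$ and $p\mid B_j$; then $p\mid B_{\gcd(n,j)}$ and $\gcd(n,j)$ is a proper divisor of $n$, so $p\mid B_{n/q}$ for some prime $q\mid n$. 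It then remains to prove the local comparison $v_p(B_n)\le v_p(B_{n/q})+2\,v_p(n)$ for this $q$; once it is known, multiplying over all $p\mid B_n$, grouping the primes according to the $q$ attached to each of them and bounding $\prod_{p}p^{\,v_p(B_{n/q})}$ by $\prod_{q\mid n}B_{n/q}$ and $\prod_p p^{\,2v_p(n)}$ by $n^{2}$, yields the displayed divisibility.

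The local comparison is the genuine content of the lemma and the step I expect to be the main obstacle. Passing to the formal group at $p$, the valuation $v_p(B_n)$ is controlled by $v_p(B_{m_p})$ together with the contribution of the $p$-part of $n$: multiplication by a prime $\ell\neq p$ is an automorphism of the formal group and leaves the relevant valuation unchanged, while multiplication by $p$ increases it. For $p\ge 5$ of good reduction this gives the clean identity $v_p(B_n)=v_p(B_{m_p})+2\,v_p(n/m_p)$, from which $v_p(B_n)\le v_p(B_{n/q})+2\,v_p(n)$ is immediate. For $p\in\{2,3\}$ and for the primes dividing $2a$, the action of $[p]$ near the boundary of the kernel of reduction is more delicate and the identity must be weakened to an inequality with a correction term; the crucial point to verify is that this correction is absorbed into $2\,v_p(n)$, so that globally one pays only the factor $n^{2}$ rather than a quantity depending on $a$. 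This is exactly the type of local analysis carried out in \cite{silverman}, \cite{ingram2} and \cite{yabutavoutier}, and I would adapt it here, exploiting the explicit shape $y^{2}=x^{3}+ax$ to pin down the contributions at the primes of bad reduction.
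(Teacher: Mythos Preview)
Your approach is correct and is essentially the argument underlying \cite[Lemma~3.4]{yabutavoutier}, which the paper simply cites and then weakens via $\eta(n)=\sum_{q\mid n}2\log q\le 2\log n$. The local comparison you flag as ``the main obstacle'' is in fact already available in the exact form you want: by \cite[Lemma~3.1]{yabutavoutier} (quoted and used later in this paper, e.g.\ in the proof of Lemma~\ref{x}), whenever $p\mid B_k$ one has the identity $\ord_p(B_{mk})=\ord_p(B_k)+2\ord_p(m)$ for the family $E_a$, with no correction terms at $p\in\{2,3\}$ or at the bad primes. Taking $k=n/q$ and $m=q$ gives $\ord_p(B_n)=\ord_p(B_{n/q})+2\ord_p(q)\le\ord_p(B_{n/q})+2\ord_p(n)$, so your divisibility $B_n\mid n^2\prod_{q\mid n}B_{n/q}$ follows immediately; indeed the sharper $B_n\mid\bigl(\prod_{q\mid n}q^2\bigr)\prod_{q\mid n}B_{n/q}$ recovers the $\eta(n)$ of the paper in place of your $2\log n$.
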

\begin{proof}
Define
\[
\eta(n)=\sum_{p\mid n}2\log p.
\]
Then, as was proved in \cite[Lemma 3.4]{yabutavoutier},
\[
\log B_n\leq \eta(n)+2n^2\rho(n)\hat{h}(P)+2C\omega(n).
\]
We conclude the proof by observing that
\[
\eta(n)\leq 2\log n.
\]
\end{proof}
\begin{lemma}\label{divpol}	Let $\psi_n$ and $\phi_n$ be the polynomials in $\Z[x,y,a]$ as defined in \cite[Exercise 3.7]{arithmetic}. We recall the properties of these polynomials that we will use in this paper. The $\psi_n$ are the so-called division polynomials.
	\begin{enumerate}[label=(\alph*)]
		\item For every $n>0$ and every $P\in E(\Q)$, 
		\[
		x(nP)=\frac{\phi_n(x(P))}{\psi_n^2(x(P))}.
		\]  
		\item The polynomial $\phi_n$ is in $\Z[x,a]$.
		\item  If $n$ is odd, then the polynomial $\psi_n$ is in $\Z[x,a]$. Instead, if $n$ is even, then $\psi_n$ is a polynomial in $\Z[x,a]$, multiplied by $y$. Therefore, using $y^2=x^3+ax$, we can assume that $\psi_n^2\in \Z[x,a]$ for every $n$.
		\item The polynomial $\phi_n(x)$ is monic and has degree $n^2$. Instead, the polynomial $\psi_n^2(x)$ has degree $n^2-1$ and its leading coefficient is $n^2$. The zeros of this polynomial are the $x$-coordinates of the non-trivial $n$-torsion points of $E(\overline{\Q})$.
	\end{enumerate}
\end{lemma}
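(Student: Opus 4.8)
The plan is to read off all four statements from the classical recursion for division polynomials, specialized to the curve $y^2=x^3+ax$, so that in effect the proof is a pointer to \cite[Exercise 3.7]{arithmetic} together with a small amount of bookkeeping. For this Weierstrass equation the $b$-invariants are $b_2=0$, $b_4=2a$, $b_6=0$, $b_8=-a^2$, all lying in $\Z[a]$. The division polynomials are $\psi_1=1$, $\psi_2=2y$, $\psi_3=3x^4+6ax^2-a^2$, $\psi_4=4y(x^6+5ax^4-5a^2x^2-a^3)$, with the recursion $\psi_{2m+1}=\psi_{m+2}\psi_m^3-\psi_{m-1}\psi_{m+1}^3$ for $m\ge 2$ and $\psi_2\psi_{2m}=\psi_m(\psi_{m+2}\psi_{m-1}^2-\psi_{m-2}\psi_{m+1}^2)$ for $m\ge 3$, while $\phi_m=x\psi_m^2-\psi_{m+1}\psi_{m-1}$. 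Since every coefficient appearing in these formulas is an integer polynomial in the $b_i$, hence in $a$, the recursion stays inside $\Z[x,y,a]$, which is the source of the integrality claims (b) and (c).

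Granting the recursion, I would prove (c) by induction on $m$: one shows that $\psi_m\in\Z[x,a]$ when $m$ is odd and $\psi_m\in y\cdot\Z[x,a]$ when $m$ is even. The one delicate step is the division by $\psi_2=2y$ in the formula for $\psi_{2m}$, which is legitimate because the parities of $m+2,m-1,m-2,m+1$ force the factor $\psi_m(\psi_{m+2}\psi_{m-1}^2-\psi_{m-2}\psi_{m+1}^2)$ to be divisible by $2y$ in $\Z[x,y,a]/(y^2-x^3-ax)$. Statement (b) then follows since in either parity the square $\psi_m^2$ and the product $\psi_{m+1}\psi_{m-1}$ become elements of $\Z[x,a]$ after using $y^2=x^3+ax$, hence so does $\phi_m=x\psi_m^2-\psi_{m+1}\psi_{m-1}$. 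Part (a), the identity $x(mP)=\phi_m(x(P))/\psi_m^2(x(P))$, is the standard description of the multiplication-by-$m$ map and can be quoted directly from \cite[Exercise 3.7]{arithmetic}, or proved by the same induction using the addition formula.

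For (d) I would run the induction once more, now viewing $\psi_m^2$ as an element of $\Z[x,a]$: the cases $m\le 3$ are immediate, and comparing top-degree terms in the two recursion identities shows that $\psi_m^2$ has degree $m^2-1$ with leading coefficient $m^2$ and that $\phi_m$ is monic of degree $m^2$, the subtracted term $\psi_{m+1}\psi_{m-1}$ having strictly smaller degree. Finally, the roots of $\psi_m^2(x)$ are exactly the $x$-coordinates of the nontrivial $m$-torsion points: $\phi_m/\psi_m^2$ is the $x$-coordinate of $mP$, which has a pole precisely when $mP=O$ and $P\ne O$, and one checks that $\phi_m$ and $\psi_m^2$ have no common root. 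The step I expect to need the most care is the parity bookkeeping in (c) (justifying the division by $\psi_2$) together with the leading-coefficient induction in (d); everything else is a direct appeal to the cited exercise.
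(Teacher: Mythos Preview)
Your proposal is correct and takes essentially the same approach as the paper: both defer to \cite[Exercise 3.7]{arithmetic}, with the paper's entire proof being the single sentence ``See \cite[Exercise 3.7]{arithmetic}.'' You simply unpack more of the bookkeeping (the parity induction for (c), the leading-term induction for (d)) than the paper bothers to, which is fine but not required here.
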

\begin{proof}
	See \cite[Exercise 3.7]{arithmetic}.
\end{proof}
Let $x(P)=u/v$ with $(u,v)=1$. We define, with a little abuse of notation, $\phi_n(u,v)$ and $\psi_n^2(u,v)$ the homogenization of the polynomials evaluated in $u$ and $v$.
Then,
\[
x(nP)=\frac{\phi_n(\frac uv)}{{\psi_n^2(\frac uv)}}=\frac{v^{n^2}\phi_n(\frac uv)}{v^{n^2}{\psi_n^2(\frac uv)}}=\frac{\phi_n(u,v)}{v\psi_n^2(u,v)}.
\]
Observe that $\phi_n(u,v)$ and $v\psi_n^2(u,v)$ are both integers and take \begin{equation}\label{defgn}
g_n:=\gcd(\phi_n(u,v),v\psi_n^2(u,v)).
\end{equation}
Therefore,
\begin{equation}\label{psi}
B_n=\frac{v\psi_n^2(u,v)}{g_n}.
\end{equation}
\begin{lemma}\label{phipsi}
	For every $n$ and $m$,
	\[
	\abs{\phi_n\psi_m^2-\psi_n^2\phi_m}^2=\psi_{n+m}^2\psi_{\abs{n-m}}^2.
	\]
\end{lemma}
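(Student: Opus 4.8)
The plan is to deduce this from two classical identities for the division polynomials that accompany the definitions recalled in Lemma~\ref{divpol} (see \cite[Exercise 3.7]{arithmetic}): the defining relation
\[
\phi_k=x\psi_k^2-\psi_{k+1}\psi_{k-1},
\]
and the standard three-term relation
\[
\psi_{n+m}\psi_{n-m}=\psi_{n+1}\psi_{n-1}\psi_m^2-\psi_{m+1}\psi_{m-1}\psi_n^2 ,
\]
together with the conventions $\psi_0=0$ and $\psi_{-k}=-\psi_k$ coming from the recursions, which in particular give $\psi_{n-m}^2=\psi_{|n-m|}^2$ for all $n,m$.

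First I would substitute $\psi_{k+1}\psi_{k-1}=x\psi_k^2-\phi_k$, for $k=n$ and $k=m$, into the three-term relation. The two copies of $x\psi_n^2\psi_m^2$ cancel, and one is left with the clean identity
\[
\psi_{n+m}\psi_{n-m}=\psi_n^2\phi_m-\phi_n\psi_m^2=-\bigl(\phi_n\psi_m^2-\psi_n^2\phi_m\bigr).
\]
Squaring both sides and using $\psi_{n-m}^2=\psi_{|n-m|}^2$ yields $\bigl(\phi_n\psi_m^2-\psi_n^2\phi_m\bigr)^2=\psi_{n+m}^2\psi_{|n-m|}^2$; since the right-hand side is a product of squares, hence non-negative at every real argument, one may freely replace $(\,\cdot\,)^2$ by $\abs{\,\cdot\,}^2$, which is exactly the assertion. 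I would also note that this is a genuine identity in $\Z[x,a]$, so that it remains valid after the homogenization in $u,v$ used elsewhere: $\psi_k^2$ and $\phi_k$ lie in $\Z[x,a]$ by Lemma~\ref{divpol}, and since $n+m$ and $n-m$ have the same parity the product $\psi_{n+m}\psi_{n-m}$ is either already in $\Z[x,a]$ or is $y^2=x^3+ax$ times an element of $\Z[x,a]$.

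There is no real obstacle here beyond this bookkeeping once the three-term relation is granted; if one prefers a self-contained argument, the only non-formal step is proving that relation. The quickest route is analytic: writing $\psi_k=\sigma(kz)/\sigma(z)^{k^2}$ for the Weierstrass $\sigma$-function attached to $E_a$ over $\C$, the two-variable identity $\sigma(u+v)\sigma(u-v)=-\sigma(u)^2\sigma(v)^2\bigl(\wp(u)-\wp(v)\bigr)$ applied with $(u,v)=(nz,mz)$, $(nz,z)$ and $(mz,z)$ gives $\wp(mz)-\wp(nz)$ in two ways, and comparing them produces the three-term relation once one checks that all the powers of $\sigma(z)$ match (they do: both sides carry weight $2n^2+2m^2$). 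Alternatively one can prove it by induction on $\max(n,m)$ from the recursions for $\psi_k$. In either case the content is entirely classical; the role of the lemma is simply to isolate the combination $\phi_n\psi_m^2-\psi_n^2\phi_m$, which is what will appear when estimating the common divisors $g_n$ of the terms $B_n$.
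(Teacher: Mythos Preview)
Your argument is correct, but it proceeds quite differently from the paper. You work purely algebraically: you combine the defining relation $\phi_k=x\psi_k^2-\psi_{k+1}\psi_{k-1}$ with the three-term recursion $\psi_{n+m}\psi_{n-m}=\psi_{n+1}\psi_{n-1}\psi_m^2-\psi_{m+1}\psi_{m-1}\psi_n^2$, both of which are part of the package in \cite[Exercise~3.7]{arithmetic}, and the identity drops out after a cancellation and a squaring. The paper instead argues geometrically: it checks that both sides have the same degree $2(n^2+m^2-1)$ and the same leading coefficient $(n^2-m^2)^2$, and then identifies the roots by observing that $\phi_n\psi_m^2-\psi_n^2\phi_m=(x(nP)-x(mP))\psi_n^2\psi_m^2$ vanishes precisely at the $x$-coordinates of the non-trivial $(n+m)$- and $|n-m|$-torsion points, since $x(nQ)=x(mQ)$ exactly when $nQ=\pm mQ$. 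Your route is shorter and entirely formal once the three-term relation is granted; the paper's route is more self-contained in that it does not invoke that relation, and it makes transparent the torsion-point interpretation of the zeros, at the cost of a slightly more delicate root-and-degree count.
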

\begin{proof}
	Observe that both sides have degree $2(n^2+m^2-1)$. The leading term of both sides is $(n^2-m^2)^2$. Then, we just need to check that the zeros of the two polynomials are the same.
	Using the definition, \[\phi_m(x)\psi_n^2(x)-\phi_n(x)\psi_m^2(x)=(x(mP)-x(nP))\psi_n^2(x)\psi_m^2(x).\]
	Thanks to the group law, for every point $R\in E(\Q)$, $x(R)=x(-R)$, as shown for example in \cite[III.2.3]{arithmetic}. If $Q$ is a point of $(n+m)$-torsion, then $x(nQ)=x(-mQ)=x(mQ)$ and so the left side is annihilated in the $x$-coordinates of the $(n+m)$-torsion points. If $Q$ is a point of $\abs{n-m}$-torsion, then $x(nQ)=x(mQ)$ and so the left side is also annihilated in the $x$-coordinates of the $\abs{n-m}$-torsion points. The non-trivial $n+m$-torsion points are $(n+m)^2-1$ and the non-trivial $\abs{n-m}$-torsion points are $(n-m)^2-1$. Therefore, the union of these two sets has $2(n^2+m^2-1)$ elements, that is the degree of the polynomial. So, the roots of both polynomials are the abscissas of the non-trivial $(n+m)$-torsion points and the non-trivial $\abs{n-m}$-torsion points. 
\end{proof}
\begin{lemma}\label{res}
	Let $u$ and $v$ be two coprime integers. Then,
	\[
	\gcd(\phi_k(u,v),v\psi_k^2(u,v))\mid \Delta^{k^2(k^2-1)/6}.
	\]
\end{lemma}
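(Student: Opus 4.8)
The plan is to bound $g_k = \gcd(\phi_k(u,v), v\psi_k^2(u,v))$ by controlling the primes that can divide it and the power to which each can appear. First I would observe that if a prime $\ell$ divides $g_k$, then $\ell \mid v\psi_k^2(u,v)$ and $\ell \mid \phi_k(u,v)$; reducing modulo $\ell$, the point $P \bmod \ell$ (when it has good reduction) would be a point whose $k$-th multiple has both numerator and denominator of its $x$-coordinate vanishing, which forces $kP$ to reduce to the identity. So $\ell$ must be a prime of bad reduction, i.e. $\ell \mid \Delta$, \emph{or} $\ell$ divides the leading coefficient $k^2$ of $\psi_k^2$ in a degenerate way. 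The cleaner route, and the one I would actually pursue, is via resultants: by Lemma \ref{divpol}(d), $\phi_k(x)$ is monic of degree $k^2$ and $\psi_k^2(x)$ has degree $k^2-1$ with leading coefficient $k^2$, and these two polynomials (over $\overline{\Q}$) have no common root — $\phi_k/\psi_k^2 = x(kP)$ as a function, and a common root of $\phi_k$ and $\psi_k^2$ would be an $x$-coordinate of a $k$-torsion point $Q$ with $x(kQ)=0$ well-defined, which is impossible. Hence $\Res(\phi_k,\psi_k^2) \neq 0$, and $g_k$ divides $\Res(\phi_k,\psi_k^2)$ up to controlled factors coming from homogenization and from $v$.

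More precisely, the key step is the standard fact that for coprime integers $u,v$ and homogenized forms $F,G$ of degrees $d_1,d_2$, the integer $\gcd(F(u,v),G(u,v))$ divides $\Res(F,G)$. Applying this with $F = \phi_k$ (degree $k^2$) and $G = \psi_k^2$ (degree $k^2-1$), I get $\gcd(\phi_k(u,v),\psi_k^2(u,v)) \mid \Res(\phi_k,\psi_k^2)$; the extra factor of $v$ in $v\psi_k^2(u,v)$ contributes at most a prime $\ell \mid v$, but such $\ell$ cannot divide $\phi_k(u,v)$ for $k\ge 1$ since $\phi_k(u,v) \equiv u^{k^2} \pmod{\ell}$ (as $\phi_k$ is monic in $x$, its homogenization has $u^{k^2}$ as the term with no $v$) and $\gcd(u,v)=1$, so the factor of $v$ is harmless. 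Thus $g_k \mid \Res(\phi_k,\psi_k^2)$.

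It then remains to show $\Res(\phi_k,\psi_k^2)$ divides $\Delta^{k^2(k^2-1)/6}$. Here I would compute the resultant in the function field / use the factorization of $\psi_k^2$ over $\overline{\Q}$: since the roots of $\psi_k^2$ are the $x$-coordinates of the nontrivial $k$-torsion points, $\Res(\phi_k,\psi_k^2) = (k^2)^{k^2} \prod_{Q} \phi_k(x(Q)) = \pm(k^2)^{k^2}\prod_Q \psi_k^2(x(Q)) \cdot x(kQ)$, and since $kQ = O$ this needs the more careful analysis via the formal group or via Lemma \ref{phipsi}. Actually the most efficient approach uses the known formula for discriminants of division polynomials: the discriminant of $\psi_k^2$ and the resultants among $\phi_k,\psi_k^2$ are classically expressible as powers of $\Delta$ (this is essentially in Silverman's exercises and in the work of Ayad/Cheon–Hahn). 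I expect the main obstacle to be pinning down the \emph{exact} exponent $k^2(k^2-1)/6$: one must carefully track how each prime $\ell \mid \Delta$ enters, using that at such a prime the reduction type forces $k$-torsion to degenerate in a way governed by the Néron model, and a valuation count at each bad prime yields the bound $k^2(k^2-1)/6$ after summing the contributions over the $k^2-1$ torsion points. I would organize this as: (i) reduce to bounding $v_\ell(g_k)$ for each prime $\ell \mid \Delta$; (ii) at such $\ell$, bound $v_\ell(g_k)$ by $k^2(k^2-1)v_\ell(\Delta)/6$ using the theory of formal groups and the structure of $E$ over $\Q_\ell$; (iii) conclude by taking the product over all $\ell \mid \Delta$.
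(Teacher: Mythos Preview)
Your approach via resultants is essentially the same as the paper's: reduce to $g_k \mid \Res(\phi_k,\psi_k^2)$ using the B\'ezout identity for the resultant and the observation that primes dividing $v$ cannot divide $\phi_k(u,v)\equiv u^{k^2}$ since $\phi_k$ is monic. For the final step you anticipate difficulty in pinning down the exponent, but the paper sidesteps this entirely by citing the exact formula $\Res(\phi_k,\psi_k^2)=\Delta^{k^2(k^2-1)/6}$ from \cite{disc} (the Cheon--Hahn type result you allude to), so no formal-group or valuation-counting argument is needed.
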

\begin{proof}
	Let $R_k:=\Res(\phi_k(x),\psi_k^2(x))$, where with Res we denote the resultant of the two polynomials. Then, there exist two polynomials $P_k$ and $Q_k$ with integer coefficients such that
	\[
	P_k(x)\phi_k(x)+Q_k(x)\psi_k^2(x)=R_k.
	\]
	Evaluating the equation in $x=u/v$ and multiplying by an appropriate power of $v$, we have
	\[
	P_k'(u,v)\phi_k(u,v)+vQ_k'(u,v)\psi_k^2(u,v)=R_kv^s
	\]
	where $P_k'$ and $Q_k'$ are two bivariate polynomials. Thus,  \[\gcd(\phi_k(u,v),v\psi_k^2(u,v))|R_kv^s.\] If $p$ is a prime divisor of $v$, then
	\[
	\phi_k(u,v)\equiv u^{k^2}\not\equiv 0 \mod(p)
	\]
	since $\phi_k$ is monic and then the gcd does not divide any prime divisor of $v$. So,
	\[
	\gcd(\phi_k(u,v),v\psi_k^2(u,v))|R_k.
	\]
	Using \cite[Theorem 1.1]{disc} we know that 
	\[
	R_k=\Delta^{\frac{k^2(k^2-1)}{6}}
	\]and so we conclude.
\end{proof}
\begin{remark}
	We can assume $\abs{a}\geq 2$. Indeed, if $\abs{a}=1$, then $E_a$ has rank $0$ and so there are no non-torsion points. 
\end{remark}
Now, we briefly show how we perform some of the computations. We will use PARI/GP 2.11.1 \cite{PARI2} and SAGE 8.2 \cite{sagemath}.
\begin{itemize}
	\item  A Thue equation can be solved using the command "thue" on the software PARI/GP.
	\item At some point we will need to compute a lower bound for the canonical height of every non-torsion point of a given curve $E$. We can use the command "E.height\_function ().min(.0001, 20)" of SAGE.
	This gives a lower bound with an error less than $0.01$ for the curves that we will consider. This means that the canonical height of every non-torsion point can be bounded from below by this value minus $0.01$.
	\item With the command "ellheight" of PARI/GP we can compute the canonical height of a point on an elliptic curve.
\end{itemize}
\begin{remark}
	The definition of the canonical height used by PARI/GP and SAGE is slightly different from our definition. Indeed, our canonical height is half the height of the two software. So, every value computed with PARI/GP and SAGE has to be divided by $2$.
\end{remark}
	\begin{lemma}\label{100}
	If $\abs{a}\leq 100$, then 
	\[
	\hat{h}(P)\geq \frac{\log \abs{a}+\log 16}{42}> 0.023\log\abs{a}+0.066
	\]
	for every non-torsion point $P\in E_a(\Q)$.
\end{lemma}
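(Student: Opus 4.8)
The plan is to reduce the statement to a finite computation. Since $a$ is a fourth-power-free integer with $2\le \abs{a}\le 100$ (recall from the Remark that $\abs{a}=1$ forces rank $0$, so there is nothing to prove there), only finitely many curves $E_a$ are involved, and among those only the ones with $E_a(\Q)$ of positive rank need to be treated; for the rank-$0$ curves the statement is vacuous.

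Before computing, I would dispose of the larger values of $\abs{a}$ using Lemma \ref{h}. Comparing $\frac{\log\abs{a}-\log 4}{16}$ with $\frac{\log\abs{a}+\log 16}{42}$ and clearing denominators, the former is at least the latter as soon as $26\log\abs{a}\ge 148\log 2$, i.e.\ $\abs{a}\ge 2^{148/26}=2^{5.69\ldots}$; thus Lemma \ref{h} already gives the claim whenever $\abs{a}\ge 52$. Also, the second inequality of the statement is immediate: since $\abs{a}\ge 2$ we have $\log\abs{a}>0$, while $\frac{1}{42}>0.023$ and $\frac{\log 16}{42}>0.066$, so $\frac{\log\abs{a}+\log 16}{42}>0.023\log\abs{a}+0.066$. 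It then remains to verify the first inequality for the finitely many $a$ with $2\le\abs{a}\le 51$ (equivalently, one may simply run the computation for all $2\le\abs{a}\le 100$).

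For each such $a$ with $E_a$ of positive rank, I would use the SAGE command \texttt{E.height\_function().min(.0001, 20)} to obtain a lower bound, accurate to within $0.01$, for the minimum of the canonical height over the non-zero points of the Mordell--Weil lattice; after halving (to pass from SAGE's normalization to ours) and subtracting a further $0.01$ for the stated error, this yields a rigorous lower bound for $\hat h(P)$ valid for every non-torsion $P\in E_a(\Q)$, since the canonical height does not see torsion. One then checks, case by case, that this bound is at least $\frac{\log\abs{a}+\log 16}{42}$.

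The main obstacle is making this last step genuinely rigorous rather than merely numerically convincing: one must be careful about the sign and magnitude of the error in \texttt{height\_function().min}, about the factor-of-two normalization, and especially about any curve for which the true minimal height lies within the $0.01$ safety margin of the target value --- in such a case the clean way to finish is to exhibit an explicit generator (or a small point realizing the minimum) and compute its canonical height directly with \texttt{ellheight} in PARI/GP, which then settles that curve exactly. A minor point to keep straight is simply the bookkeeping of which $a$ are in play: the non-fourth-power-free values $\pm 16,\pm 32,\pm 48,\pm 64,\pm 80,\pm 81,\pm 96$ are excluded by hypothesis, and the positive-rank condition is the only other restriction.
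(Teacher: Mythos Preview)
Your proposal is correct and follows the same overall strategy as the paper (reduce to a finite check), but the case split is different. The paper invokes the sharper bound from \cite[Theorem~1.2]{yabutavoutier2}, namely $\hat{h}(P)\ge\frac{\log|a|+4\log 2}{16}$ whenever $a\not\equiv 4\pmod{16}$, which already dominates $\frac{\log|a|+\log 16}{42}$ for every such $a$; this leaves only the thirteen curves with $a\equiv 4\pmod{16}$ and $|a|\le 100$ to examine individually (via the LMFDB tables for the rank-$0$ and rank-$1$ cases, and the SAGE height-minimum command only for $a=68$, which has rank~$2$). Your route instead uses only the weaker Lemma~\ref{h}, which forces you to run the SAGE computation for every positive-rank $a$ with $2\le|a|\le 51$. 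Both arguments are valid; the paper's version buys a much shorter list of explicit checks at the cost of quoting one more external result, while yours is more self-contained within the lemmas already stated in the paper but pushes more work onto the machine. Your treatment of the second numerical inequality and your caveats about normalization and borderline cases are appropriate.
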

\begin{proof}
	If $a\not\equiv4\mod 16$, then, using \cite[Theorem 1.2]{yabutavoutier2},
	\[
	\hat{h}(P)\geq \frac{\log\abs{a}+4\log 2}{16}>\frac{\log \abs{a}+\log 16}{42}.
	\] 
	If $a\equiv 4\mod 16$ and $\abs{a}\leq 100$, then we have to study only $13$ curves. If $a\neq 68$, then, using the ecdb database \cite{lmfdb}, we find that these curves have rank $0$ or $1$. If the curve has rank $0$, then the lemma is trivial since there are no non-torsion points. If the curve has rank $1$, then the minimum for $\hat{h}(P)$ is at the generator of the curve and then, using the ecdb database we can check that the inequality holds. Observe that the database \cite{lmfdb} uses the definition of $\hat{h}$ as PARI/GP, so every value of $\hat{h}$ taken from the ecdb database has to be divided by $2$. It remains to deal with the curve with $a=68$. Using Sage 8.2, it is possible to find a lower bound for the canonical height over an elliptic curve, using the command "E.height\_function ().min(.0001, 20)". For the curve $E_{68}$ this bound is $\hat{h}\geq 0.32$ and so the inequality still holds. The lowest value for \[\frac{\hat{h}(P)}{\log\abs{a}+\log 16}\] is at $a=-12$ and $x(P)=-2$, where it is $0.2383... >1/42$.
\end{proof}
\begin{lemma}\label{min}
	For every $a$ fourth-power-free and for every non-torsion point $P\in E_a(\Q)$, it holds
	\[
	\hat{h}(P)\geq \frac 1{10}.
	\]
\end{lemma}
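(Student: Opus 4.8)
The plan is to split according to the size of $\abs{a}$, treating all but a handful of small curves with the uniform height bounds recorded above. Suppose first that $\abs{a}\ge 5$. If moreover $\abs{a}\le 100$, then Lemma \ref{100} gives
\[
\hat{h}(P)\ge\frac{\log\abs{a}+\log 16}{42}\ge\frac{\log 80}{42}>\frac1{10},
\]
since the middle quantity is increasing in $\abs{a}$ and $\log 80>4.2$. If instead $\abs{a}>100$, then Lemma \ref{h} gives
\[
\hat{h}(P)\ge\frac{\log\abs{a}-\log 4}{16}>\frac{\log 25}{16}>\frac1{10}
\]
by the same monotonicity together with $\log 25>1.6$; in fact Lemma \ref{h} already suffices as soon as $\abs{a}\ge 20$, since $\log 5>1.6$. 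Thus the statement holds for every fourth-power-free $a$ with $\abs{a}\ge 5$.

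It remains to treat the fourth-power-free integers with $\abs{a}\le 4$, namely $a\in\{\pm2,\pm3,\pm4\}$, which give six curves $E_a$. For each of them I read the Mordell--Weil rank of $E_a(\Q)$ from \cite{lmfdb}. If the rank is $0$ there is no non-torsion point and the statement is vacuous; for instance $y^2=x^3-4x$ is the congruent number curve attached to $2$, which is not a congruent number, so its rank is $0$. If the rank is $1$, fix a generator $G$ of the free part; then any non-torsion point equals $nG+T$ with $n\ne 0$ and $T$ torsion, so $\hat{h}(nG+T)=n^2\hat{h}(G)\ge\hat{h}(G)$, and it is enough to verify $\hat{h}(G)>1/10$. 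This I check with the command \texttt{ellheight} of PARI/GP \cite{PARI2} applied to $E_a$ and $G$, dividing the output by $2$ to pass to our normalization; in each rank-one case the value obtained is comfortably larger than $1/10$.

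The only genuine obstacle is this last finite verification: for $\abs{a}\le 4$ the uniform bounds fall just short of the target — for $a=\pm4$ Lemma \ref{h} gives only $\hat{h}(P)\ge 0$, and Lemma \ref{100} only about $0.099$ — so these six curves really do require the rank determination and, in the rank-one cases, the explicit computation of the canonical height of a generator. Since each such curve is completely standard, this causes no real difficulty.
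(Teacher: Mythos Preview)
Your approach matches the paper's: split by $\abs{a}$, apply Lemmas~\ref{h} and~\ref{100} for $\abs{a}\ge 5$, and dispose of the finitely many small-$\abs{a}$ curves by reading off their rank and, when positive, the generator's canonical height. One minor slip: the fourth-power-free integers with $\abs{a}\le 4$ also include $a=\pm 1$, which you omitted from your list; both of these curves have rank~$0$ (the paper records this in the Remark preceding Lemma~\ref{100}), so the omission is harmless.
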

\begin{proof}
	If $\abs{a}\geq 100$, then using Lemma \ref{h}
	\[
	\hat{h}(P)\geq \frac{\log 100-\log 4}{16}\geq  \frac 15.
	\]
	If $5\leq \abs{a}\leq 100$, then from Lemma \ref{100},
	\[
	\hat{h}(P)\geq \frac{\log 5+\log 16}{42}\geq  \frac 1{10}.
	\]
	For $1\leq \abs{a}\leq 4$, the ranks of the curves are zero except for $a=-2$ and $a=3$, where the ranks are both 1. These two cases can be checked using the ecdb database.
\end{proof}
	\section{Proof of Theorem \ref{Teo}}
	Recall that we are considering the elliptic divisibility sequence $B_n(E_a,P)$, where $E_a$ is defined by the equation $y^2=x^3+ax$ with $a$ fourth-power-free integer. With the next lemma, we show that we need to prove the theorem only for $n$ square-free. Observe that the next lemma holds for every elliptic curve, not only for $j(E)=1728$. 
	\begin{lemma}\label{rad}
	Let $E$ be a rational elliptic curve and let $P$ be a non-torsion point. Given a natural number $n>1$, let $r:=\prod_{p\mid n}p$ denote the radical of $n$. If $B_n(E,P)$ does not have a primitive divisor, then neither does $B_r(E,(n/r)P)$.
\end{lemma}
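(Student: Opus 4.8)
The plan is to reduce the statement to a purely arithmetic fact about the indices killed by a fixed prime. Write $m=n/r$ and $Q=(n/r)P=mP$ (a non-torsion point). For every $k$ we have $x(kQ)=x((km)P)$, and since $A_{km}/B_{km}$ is in lowest terms with positive denominator, $B_k(E,Q)=B_{km}(E,P)$; in particular $B_r(E,Q)=B_{rm}(E,P)=B_n(E,P)$. By the definition of primitive divisor, a prime $p$ is a primitive divisor of $B_r(E,Q)$ exactly when $p\mid B_n(E,P)$ and $p\nmid B_{km}(E,P)$ for all $1\le k\le r-1$. So, assuming $B_n(E,P)$ has no primitive divisor, it suffices to prove that every prime $p$ with $p\mid B_n(E,P)$ and $p\mid B_i(E,P)$ for some $i<n$ must divide $B_{km}(E,P)$ for some $1\le k\le r-1$. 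Indeed, if $B_n(E,P)$ has no primitive divisor, then each prime $p\mid B_n(E,P)$ divides $B_i(E,P)$ for some $i<n$, and this conclusion then prevents $p$ from being a primitive divisor of $B_r(E,Q)=B_n(E,P)$.

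Next I would recall the rank of apparition. For a fixed prime $p$, one has $p\mid B_k(E,P)$ if and only if $kP$ lies in the kernel $E_1(\Q_p)$ of reduction modulo $p$ for the chosen integral Weierstrass model; this kernel is a subgroup of $E(\Q_p)$ of finite index (isomorphic to the formal group $\widehat{E}(p\Z_p)$). Consequently, if $p$ divides some term of $(B_k(E,P))_k$, there is a positive integer $m_p$, the order of $P$ in the finite group $E(\Q_p)/E_1(\Q_p)$, with $p\mid B_k(E,P)\iff m_p\mid k$. Equivalently, using that $(B_n(E,P))_n$ is a divisibility sequence together with the property $p\mid B_i,\ p\mid B_j\Rightarrow p\mid B_{\gcd(i,j)}$, the set $\{k:p\mid B_k(E,P)\}$ is the set of multiples of its least element $m_p$.

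Then I would fix a prime $p$ with $p\mid B_n(E,P)$ and $p\mid B_i(E,P)$ for some $i<n$. Then $m_p\mid n$, and $m_p\mid i$ forces $m_p\le i<n$, so $m_p$ is a proper divisor of $n$. Let $k_0=m_p/\gcd(m_p,m)$ be the least positive integer with $m_p\mid k_0m$. Since $m_p\mid n=rm$, the integer $r$ also satisfies $m_p\mid rm$, so $k_0\mid r$ and hence $k_0\le r$. I claim $k_0\le r-1$. Suppose not, so $k_0=r$ and $m_p=rg$ with $g=\gcd(m_p,m)$. From $m_p<n=rm$ we get $g<m$, so $e:=m/g>1$; pick a prime $q\mid e$. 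Since $e\mid m\mid n$ and $r$ is squarefree with the same prime divisors as $n$, we have $q\mid r$ and $v_q(r)=1$, so $v_q(m_p)=1+v_q(g)$. But $v_q(g)=\min\{v_q(m_p),v_q(m)\}$, which forces $v_q(m_p)>v_q(m)$, whence $v_q(g)=v_q(m)$ and $v_q(e)=v_q(m)-v_q(g)=0$, contradicting $q\mid e$. Therefore $1\le k_0\le r-1$, and since $m_p\mid k_0m$ we conclude $p\mid B_{k_0m}(E,P)=B_{k_0}(E,Q)$ with $k_0\in\{1,\dots,r-1\}$. As this holds for every prime dividing $B_n(E,P)=B_r(E,Q)$, the term $B_r(E,Q)=B_r(E,(n/r)P)$ has no primitive divisor.

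The only potentially delicate point is guaranteeing the existence of the rank of apparition $m_p$ uniformly in $p$, including at primes of bad reduction and for a possibly non-minimal model (the lemma being stated for an arbitrary rational curve); this is the standard fact that the kernel of reduction is a finite-index subgroup of $E(\Q_p)$ for any integral Weierstrass equation. Everything else is elementary and uses only that $r$ is squarefree with the same prime support as $n$.
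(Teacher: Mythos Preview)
Your argument is correct, but it takes a more laborious route than the paper's. The paper argues the contrapositive: assuming $q$ is a primitive divisor of $B_r(E,(n/r)P)=B_n(E,P)$, it shows $q$ is primitive for $B_n(E,P)$ by noting that any proper divisor $n'$ of $n$ satisfies $n'\mid n/p$ for some prime $p\mid n$, and since $n/p=(r/p)(n/r)$ one gets (via the divisibility-sequence property alone) $q\mid B_{n/p}(E,P)=B_{r/p}(E,(n/r)P)$ with $r/p<r$, a contradiction. This single observation replaces your entire valuation argument for $k_0<r$; indeed, once you know $m_p$ is a proper divisor of $n$, choosing a prime $p$ with $m_p\mid n/p=(r/p)m$ immediately gives $k_0\mid r/p<r$. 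What your approach buys is explicitness about a point the paper glosses over: the paper only checks indices $n'$ that are proper divisors of $n$, silently invoking the rank-of-apparition fact (if $q\mid B_n$ and $q\mid B_{n'}$ then $q\mid B_{\gcd(n,n')}$) to reduce from arbitrary $n'<n$, whereas you spell this out and flag the delicacy at bad primes and non-minimal models.
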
  
\begin{proof}
	Suppose that $B_r(E,(n/r)P)$ has a primitive prime divisor $q$. We want to show that $q$ is also a primitive divisor for $B_n(E,P)$. Observe that $B_n(E,P)=B_{r}(E,(n/r)P)$. Suppose that $q$ divides $B_{n'}(E,P)$ with $n'$ a proper divisor of $n$. So, there exists a prime $p$ such that $n'\mid n/p$. Using that we are considering divisibility sequences, we have that $q$ divides $B_{n/p}(E,P)$. Take $r'=r/p$. Hence, $B_{r'}(E,(n/r)P)$ is divisible by $q$ since it is equal to $B_{n/p}(E,P)$. This is absurd considering that we assumed that $q$ was a primitive divisor of $B_r(E,(n/r)P)$.
\end{proof}
So by the contrapositive, it follows that if $B_r(E_a,(n/r)P)$ always
has a primitive divisor, then so does $B_n(E_a,P).$ Since $r$ is square-free, then in order to prove Theorem \ref{Teo}, we just need to prove that $B_n(E_a,P)$ has always a primitive divisor for $n$ square-free.
		\begin{proposition}
			Let $n=5m$ be a square-free integer with $m\geq 11$. Then $B_n$ has a primitive divisor. 
		\end{proposition}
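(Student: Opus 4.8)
The plan is to argue by contradiction: suppose $B_n$ has no primitive divisor. First I would combine the lower bound of Lemma \ref{ndp} with the upper bound of Lemma \ref{yv} (taken with $n=5m$, so that $(5m)^2=25m^2$, $\omega(5m)=1+\omega(m)$ and $\rho(5m)=\tfrac1{25}+\rho(m)$ because $5\nmid m$); after rearranging this gives
\[
\bigl(18-50\rho(5m)\bigr)m^2\hat h(P)\ \le\ 2\log(5m)+2C\,\omega(5m)+29\log|a|+32.863 .
\]
This is informative only when $18-50\rho(5m)>0$, i.e. $\rho(5m)<0.36$. By Lemma \ref{rho} and $5\nmid m$, if $2\nmid m$ or $3\nmid m$ then $\rho(5m)<\tfrac1{25}+\tfrac14+\sum_{p\ge 7}p^{-2}<0.342$, while if $6\mid m$ then $\rho(5m)\ge\tfrac1{25}+\tfrac14+\tfrac19>0.4$. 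So the proof splits according to whether $6\mid m$.

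Assume first $6\nmid m$. Here I would feed the height estimates into the displayed inequality: Lemma \ref{h} gives $\log|a|\le 16\hat h(P)+\log 4$, which also turns $C=0.26+\tfrac14\log|a|$ into $4\hat h(P)+0.26+\log 4$; since $\omega(m)=O(\log m)$, the right-hand side is at most $\alpha(m)\hat h(P)+\beta(m)$ with $\alpha(m),\beta(m)=O(\log m)$, whereas the left-hand side is a fixed positive multiple of $m^2\hat h(P)$. Using $\hat h(P)\ge\tfrac1{10}$ (Lemma \ref{min}), this is a contradiction once $m$ exceeds an explicit bound, and a direct check shows that the only $m\ge 11$ with $6\nmid m$ not handled this way is $m=14$. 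For $n=70$ I would instead argue that $B_{70}$ having no primitive divisor forces, via the identities of Lemma \ref{phipsi} and the resultant bound of Lemma \ref{res}, the point $P$ to be a solution of a Thue equation, whose finitely many solutions are listed with PARI/GP and checked one by one.

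Now suppose $6\mid m$, which (as $m$ must be square-free) forces $m\ge 42$. Since the displayed inequality is now vacuous, I would exploit the term $B_{5m/2}$, noting that $5m/2$ is odd and equals $5\cdot(m/2)$ with $m/2\ge 21$. The crucial point is a propagation step: if $q$ is a primitive divisor of $B_{5m/2}$ with $q\nmid\Delta$, then $q$ is a primitive divisor of $B_{5m}$. Indeed, every proper divisor $d$ of $5m$ not dividing $5m/2$ satisfies $d\mid 5m/\ell$ for some prime $\ell\ne 2$, and $\gcd(B_{5m/2},B_{5m/\ell})$ divides $B_{5m/(2\ell)}$ times a power of $\Delta$ (a consequence of Lemmas \ref{phipsi} and \ref{res}); since $q$ is primitive for $B_{5m/2}$ it does not divide $B_{5m/(2\ell)}$, and $q\nmid\Delta$, so $q\nmid B_{5m/\ell}$ and hence $q\nmid B_d$. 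Consequently, if $B_{5m}$ has no primitive divisor, the primitive part $D$ of $B_{5m/2}$ (the part of $B_{5m/2}$ supported on primes whose rank of apparition is exactly $5m/2$) is composed only of primes dividing $\Delta$, so $\log D\le E(a)$, where $E(a)$ is a bound on the $\Delta$-part of a term of the sequence, uniform in $m$ and of size $O(\log|a|)$ (again via Lemma \ref{res}). On the other hand, bounding the imprimitive part $B_{5m/2}/D$ as in the proof of Lemma \ref{yv} — which now contributes only $2(5m/2)^2\rho(5m/2)\hat h(P)+2C\,\omega(5m/2)+\eta(5m/2)$ with $\rho(5m/2)<\sum_{p\ \mathrm{odd}}p^{-2}<0.203$ — and bounding $\log B_{5m/2}$ from below by Lemma \ref{ndp} applied to $m/2$, I obtain
\[
\log D\ \ge\ \bigl(18-50\rho(5m/2)\bigr)(m/2)^2\hat h(P)-29\log|a|-32.863-2C\,\omega(5m/2)-\eta(5m/2) .
\]
Since $18-50\rho(5m/2)>7.8$ and $m/2\ge 21$, combining this with $\hat h(P)\ge\tfrac1{10}$ and $\log|a|\le 16\hat h(P)+\log 4$ makes the right-hand side exceed $E(a)$, contradicting $\log D\le E(a)$.

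The step I expect to be the main obstacle is the $6\mid m$ case: proving the propagation step precisely — in particular that $\gcd(B_{5m/2},B_{5m/\ell})$ equals $B_{5m/(2\ell)}$ up to a controlled power of $\Delta$ — and establishing the uniform estimate $E(a)=O(\log|a|)$ for the discriminant contribution to a term of the sequence. Once these are in hand, everything else is bookkeeping with the height inequalities above.
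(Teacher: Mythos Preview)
Your propagation step in the $6\mid m$ case is false, and this is the fatal gap. You claim that if $q$ is a primitive divisor of $B_{5m/2}$ with $q\nmid\Delta$, then $q$ is a primitive divisor of $B_{5m}$. But by definition a primitive divisor of $B_{5m}$ is a prime dividing $B_{5m}$ and \emph{no earlier term}; since $5m/2<5m$ and $q\mid B_{5m/2}$, the prime $q$ already divides an earlier term and therefore can never be primitive for $B_{5m}$. The subsequent conclusion --- that absence of a primitive divisor for $B_{5m}$ forces the primitive part of $B_{5m/2}$ to be supported on $\Delta$ --- simply does not follow: there is no mechanism by which information at index $5m$ constrains the primitive part at the smaller index $5m/2$. (Compare Lemma~\ref{rad}, which does give a propagation, but in the opposite direction: from $n$ down to its radical, changing the base point.)

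The paper handles the problematic range differently, splitting on the parity of $n$ rather than on $6\mid m$. For $n$ odd one has $\rho(n)<\sum_{p\text{ odd}}p^{-2}<0.203$, so the combination of Lemma~\ref{ndp} and Lemma~\ref{yv} gives a usable gap $9/25-\rho(n)>1/6.34$. For $n$ even the paper abandons Lemma~\ref{ndp} entirely and instead uses the duplication formula: writing $n=2k$ and applying $x(2Q)=\phi_2(x(Q))/\psi_2^2(x(Q))$ at $Q=kP$, together with the resultant bound of Lemma~\ref{res}, yields
\[
\log B_n \ge 4k^2\hat h(P)-4C-\log(|2a|\Delta^2)+\log 4 = n^2\hat h(P)-O(\log|a|),
\]
so the coefficient in front of $n^2\hat h(P)$ jumps from $18/25$ to $1$, and the comparison with Lemma~\ref{yv} now requires only $\tfrac12-\rho(n)>0$, which holds for all $n$ by Lemma~\ref{rho}. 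This is exactly the missing idea you need for the $6\mid m$ case. The residual values $n=55$ and $n=70$ are then disposed of by direct substitution into the inequalities (\ref{dis1}) and (\ref{pari}) --- no Thue equation is needed.

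Your treatment of the $6\nmid m$ case is in the right spirit, but note that when $2\mid m$ and $3\nmid m$ your bound $18-50\rho(5m)\ge 0.9$ is much weaker than the paper's, so the ``only $m=14$ remains'' claim would need careful numerical verification; the paper's odd/even split avoids this by keeping $\rho$ small in the first branch and using the stronger lower bound in the second.
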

		\begin{proof}
			Suppose that $B_n$ does not have a primitive divisor. Then,
			\[
			\log B_n\leq 2\log n+2n^2\rho(n)\hat{h}(P)+2C\omega(n),
			\]
			thanks to Lemma \ref{yv}.
			Therefore, using Lemma \ref{ndp},
			\begin{equation}\label{dis1}
			2n^2\hat{h}(P)\Big(\frac{9}{25}-\rho(n)\Big)\leq29\log \abs{a}+32.863+2C\omega(n)+2\log n.
			\end{equation}
			If we show that the inequality does not hold, then $B_n$ has a primitive divisor.
			Suppose that $2$ does not divide $n$. So,
			\[
			\Big(\frac{9}{25}-\rho(n)\Big)> \Big(\frac{9}{25}-\Big(\sum_{p \text{  prime}} \frac{1}{p^2}-\frac 14\Big)\Big)>0.36-0.45225+0.25>\frac{1}{6.34}
			\]
			and 
			\begin{equation}\label{omega}
			\omega(n)\leq \frac{\log n}{\log 3}< 0.92 \log n
			\end{equation}
			since $3^{\omega(n)}\leq n$.
			Thus, using Lemma \ref{h}, we obtain
			\begin{align*}
			n^2&\leq \frac{6.34(29\log \abs{a}+32.863+2C\omega(n)+2\log n)}{2\hat{h}(P)}\\&\leq 50.8\Big(\frac{2C(0.92 \log n)+29\log \abs{a}+32.863+2\log n}{\log \abs{a}-\log 4}\Big)
			\end{align*}
			and so
			\begin{equation}\label{dis22}
			n^2\leq \log n\Big(\frac{126+23.4\log \abs{a}}{\log \abs{a}-\log 4}\Big)+\Big(\frac{1670+1473.2\log \abs{a}}{\log \abs{a}-\log 4}\Big).
			\end{equation}
			Suppose $\abs{a}\geq 100$. Then,
			\[
			\Big(\frac{126+23.4\log \abs{a}}{\log \abs{a}-\log 4}\Big)\leq 73
			\]
			and
			\[
			\Big(\frac{1670+1473.2\log \abs{a}}{\log \abs{a}-\log 4}\Big)\leq 2627.  
			\]
			Hence, (\ref{dis22}) becomes
			\[
			n^2\leq \log n\Big(\frac{126+23.4\log \abs{a}}{\log \abs{a}-\log 4}\Big)+\Big(\frac{1670+1473.2\log \abs{a}}{\log \abs{a}-\log 4}\Big)\leq 73 \log n+2627.
			\]
			If $n\geq 55$, it is easy to check that
			\[
			n^2\geq 73\log n+2627
			\]
			and so the inequality does not hold and we have a primitive divisor for $\abs{a}>100$ and $n\geq 55$.
			Suppose now $\abs{a}\leq 100$. Then, if $B_n$ does not have a primitive divisor, we know from (\ref{dis1}) that
			\[
			2n^2\hat{h}(P)\Big(\frac{9}{25}-\rho(n)\Big)\leq29\log \abs{a}+32.863+2C\omega(n)+2\log n.
			\]
			Therefore, from Lemma \ref{100} and (\ref{omega}), we obtain
			\begin{align*}
			n^2&\leq \frac{6.34(29\log \abs{a}+32.863+2C\omega(n)+2\log n)}{2\hat{h}(P)}\\&\leq 133.2\Big(\frac{2C(0.92 \log n)+29\log \abs{a}+32.863+2\log n}{\log \abs{a}+\log 16}\Big).
			\end{align*}
			Proceeding as in the case $\abs{a}\geq 100$, we have
			\[
			n^2\leq \Big(\frac{330.2+61.4\log \abs{a}}{\log\abs{a}+\log 16}\Big)\log n+\Big(\frac{3863\log \abs{a}+4377.4}{\log \abs{a}+\log 16}\Big).
			\]
			For $2\leq \abs{a}\leq 100$,
			\[
			\frac{330.2+61.4\log \abs{a}}{\log\abs{a}+\log 16}\leq 108
			\]
			and
			\[
			\frac{3863\log \abs{a}+4377.4}{\log \abs{a}+\log 16}\leq 3005
			\]
			using Lemma \ref{100}.
			It is easy to check that for $n\geq 65$,
			\[
			n^2\geq 108\log n+3005
			\]
			and so the inequality does not holds and we have a primitive divisor for $\abs{a}\leq100$ and $n\geq 65$.
			
			Now, we want to deal with the case $n$ even. We will use the ideas used in the proof of \cite[Theorem 2.4]{everestward}. Let $n=2k$. Then, using (\ref{psi}),
			\[
			x(nP)=x(2(kP))=\frac{\phi_2(x(kP))}{\psi_2^2(x(kP))}=\frac{\phi_2(A_k,B_k)}{B_k\psi_2^2(A_k,B_k)}
			\]
			and therefore
			\begin{equation}\label{Bn}
			B_n=\frac{B_k\psi_2^2(A_k,B_k)}{\gcd(\phi_2(A_k,B_k),B_k\psi_2^2(A_k,B_k))}\geq\frac{4\abs{A_k}B_k(A_k^2+aB_k^2)}{\Delta^2}
			\end{equation}
			since 
			\[
			\gcd(\phi_2(A_k,B_k),B_k\psi_2^2(A_k,B_k))\mid \Delta^{\frac{2^2(2^2-1)}{6}}=\Delta^2,
			\]
			thanks to Lemma \ref{res}.
			If $\abs{A_k}\geq 2\abs{a} B_k$, then put $z=\abs{A_k}/B_k$ and so
			\[
			\abs{A_k^2+aB_k^2}=B_k^2\abs{z^2+a}\geq B_k^2(z^2-\abs{a})\geq B_k^2z=\abs{A_kB_k}
			\]
			since $z\geq 2\abs{a}\geq 4$. If $\abs{A_k}\leq B_k$, then
			\[
			\abs{A_k^2+aB_k^2}\geq \abs{B_k^2}\geq \abs{A_kB_k}
			\]
			and so, in both cases,
			\[
			B_n\geq \frac{4\abs{A_k}^2B_k^2}{\Delta^2}\geq \frac{4H(kP)^2}{\Delta^2}
			\]
			considering that $\abs{A_k}$ and $B_k$ are greater than $1$.
			Otherwise, $B_k\leq \abs{A_k}\leq 2 \abs{a}B_k$ and then $H(kP)=A_k$. Thus, \[B_k\geq \frac{H(kP)}{2\abs{a}}\] and so, using (\ref{Bn}),	
				\[
			B_n\geq\frac{4\abs{A_k}B_k(A_k^2+aB_k^2)}{\Delta^2}\geq \frac{4H(kP)^2}{2\abs{a}\Delta^2}.
			\] 
			Here we are using that $A_k^2+aB_k^2\neq 0$ and so  $\abs{A_k^2+aB_k^2}\geq 1$ since it is a non-zero integer. Indeed, if it is $0$, then $y^2(kP)=x(kP)^3+ax(kP)=0$ and, thanks to the group law, $kP$ would be a $2$-torsion point. This is absurd considering that $P$ is not a torsion point.
			Hence,		\[
			\log B_n\geq \log 4+ 2h(kP)-\log (\abs{2a}\Delta^2)\geq \log 4+4k^2\hat{h}(P)-4C-\log (\abs{2a}\Delta^2)
			\]
			where the second inequality follows from Lemma \ref{C}.
		Therefore, if $B_n$ does not have a primitive divisor, then by Lemma \ref{yv},
			\begin{equation}\label{pari}
			\log 4+ 2n^2\hat{h}(P)\Big(\frac{1}{2}-\rho(n)\Big)\leq 2C(\omega(n)+2)+\log (\abs{2a}(64a^3)^2)+2\log n.
			\end{equation}
			Using Lemma \ref{rho}, we know $\rho(n)< 0.46$. Moreover, $\omega(n)\leq \log n/\log 2$ since $n\geq 2^{\omega(n)}$ and so
			\begin{align*}
			n^2\hat{h}(P)&\leq 12.5\Big(2C(\omega(n)+2)+2\log n+11\log 2+7\log \abs{a}\Big)\\&\leq12.5((2.76+0.73\log \abs{a})\log n+8.67+8\log \abs{a})\\&= 34.5 \log n+9.125 \log \abs{a}\log n+108.375+100\log\abs{a}.
			\end{align*}
			Now, we proceed as in the case odd. If $\abs{a}\geq 100$, then
			\[
			\hat{h}(P)\geq \frac{\log\abs{a}-\log 4}{16}
			\]
			and therefore
			\[
			n^2\leq  \frac{552\log n +146\log n \log\abs{a}+1734+1600\log \abs{a}}{\log\abs{a}-\log 4}.
			\] 
			This equation does not hold for $n\geq 70$ and $\abs{a}\geq 100$. If $2\leq \abs{a}\leq 100$, then
			\[
			\hat{h}(P)\geq \frac{\log\abs{a}+\log 16}{42}
			\]
			and therefore
			\[
			n^2\leq \frac{1449\log n+383.25\log (n)\log\abs{a}+4551.75+4200\log\abs{a}}{\log\abs{a}+\log 16}.
			\]
			This equation does not hold for $n\geq 80$ and $2\leq \abs{a}\leq 100$. So, we have proved the proposition for $n\geq65$ odd and $n\geq 80$ even.  Since we are considering only $n\geq55$ square-free, it remains only the cases $n=55$ and $n=70$. Substituting $n=55$ in (\ref{dis1}) we obtain
			\[
			\hat{h}(P)\leq \frac{41.92+30\log \abs{a}}{1886}.
			\]
			If $\abs{a}\geq 100$, then from Lemma \ref{h},
			\[
			\frac{\log\abs{a}-\log 4}{16}\leq  \frac{41.92+30\log \abs{a}}{1886}
			\]
			and this inequality never holds. If $\abs{a}\leq 100$, then
				\[
			\frac{\log\abs{a}+\log 16}{42}\leq  \frac{41.92+30\log \abs{a}}{1886}
			\]
			and this inequality never holds. So, for $n=55$ there is always a primitive divisor. The case $n=70$ is analogous. We substitute $n=70$ in (\ref{pari}), obtaining
			\[
			\hat{h}(P)\leq \frac{18.73+9.5\log\abs{a}}{1858}.
			\]
			This inequality never holds.
		\end{proof}
	Thanks to Lemma \ref{rad}, we know that we have to prove the theorem for $n$ square-free. We know that $B_n(E_a,P)$ has always a primitive divisor for $n\geq 55$. So, it remains to deal with the cases $n=5,10,15,30$ and $35$. We begin with the case $n=35$. 
\begin{proposition}\label{35}
	The term $B_{n}$ has always a primitive divisor for $n=35$.
\end{proposition}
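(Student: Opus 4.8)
The plan is to run the height dichotomy of the previous proposition, but since $n=35$ is too small for an inequality of the shape $n^{2}\geq(\mathrm{const})\log n+(\mathrm{const})$ to close the argument, a finite verification at the end will be unavoidable. Assume for contradiction that $B_{35}$ has no primitive divisor. Since $\omega(35)=2$ and $\rho(35)=\tfrac1{25}+\tfrac1{49}$, so that $2\cdot 35^{2}\rho(35)=148$, Lemma~\ref{yv} gives
\[
\log B_{35}\leq 2\log 35+148\,\hat h(P)+4C .
\]
Combining this with Lemma~\ref{ndp} in the case $m=7$, namely $\log B_{35}\geq 882\,\hat h(P)-29\log\abs a-32.863$, and using $4C=1.04+\log\abs a$, one obtains
\[
734\,\hat h(P)\leq 30\log\abs a+41.02 .
\]

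Now I feed in the height bounds. For $\abs a\geq 100$, Lemma~\ref{h} gives $\hat h(P)\geq(\log\abs a-\log 4)/16$, so the displayed inequality forces $\tfrac{734}{16}(\log\abs a-\log 4)\leq 30\log\abs a+41.02$, hence $\log\abs a$ is bounded and $\abs a\leq 728$; for $\abs a<100$ no curve is excluded. Thus in every surviving case $\abs a$ lies in the explicit finite range $2\leq\abs a\leq 728$, and the same inequality bounds $\hat h(P)\leq(30\log\abs a+41.02)/734\leq 0.33$. On the other side Lemma~\ref{min} gives $\hat h(P)\geq\tfrac1{10}$ (and Lemma~\ref{100} gives a better lower bound for the larger $\abs a$ in the range), so the candidate points have canonical height confined to a narrow window. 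By Lemma~\ref{C}, $h(P)\leq 2\hat h(P)+2C$, so $H(P)$ obeys a modest explicit bound; hence for each $a$ in the range only finitely many non-torsion $P$ need to be examined, and these can be listed --- for instance by computing the rank and the generators of $E_{a}$, or by reducing the search for such $P$ to a Thue equation solved with the \texttt{thue} command of PARI/GP. For each resulting pair $(E_{a},P)$ one computes $B_{35}(E_{a},P)$ and exhibits a prime dividing it but none of $B_{1},\dots,B_{34}$, which contradicts the standing hypothesis and proves the proposition. (With $5$ replaced by a general prime congruent to $1$ modulo $4$, this is the template for Theorem~\ref{thm2}.)

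The main obstacle is precisely the last step: making the enumeration of low-height points provably complete and actually executing it for every $a$ with $\abs a$ up to several hundred. What keeps it tractable is that the relevant canonical-height window is roughly $[0.1,\,0.33]$ --- very narrow --- and most curves $y^{2}=x^{3}+ax$ in this range have rank $0$ (no non-torsion points at all) or have a generator whose height already exceeds the window, so only a handful of pairs $(a,P)$ genuinely require testing. I would also note why Lemma~\ref{ndp} cannot be dispensed with here: the naive alternative of writing $x(35P)=\phi_{5}(x(7P))/\psi_{5}^{2}(x(7P))$ and bounding $\gcd(\phi_{5}(A_{7},B_{7}),B_{7}\psi_{5}^{2}(A_{7},B_{7}))\mid\Delta^{100}$ via Lemma~\ref{res} loses a power of $\Delta$ far too large to be useful, so the cruder but $\Delta$-cheap estimate of Lemma~\ref{ndp} is exactly the right input, the finite check being the price of $n=35$ lying below the threshold where the asymptotic inequality bites.
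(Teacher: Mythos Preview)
Your argument is the paper's: combine Lemmas~\ref{ndp} and~\ref{yv} at $n=35$ to obtain $734\,\hat h(P)\leq 30\log|a|+41.1$, bound $|a|$ via Lemma~\ref{h}, convert the height window into a bound on $H(P)$ via Lemma~\ref{C}, and finish by an explicit enumeration. The one device the paper uses that you do not is the sharper lower bound $\hat h(P)\geq(\log|a|+\log 16)/16$ valid for all $a\not\equiv 4\pmod{16}$ (from \cite[Theorem~1.2]{yabutavoutier2}); this is incompatible with the upper bound on $\hat h(P)$ for \emph{every} such $a$, so the enumeration shrinks from all $|a|\leq 728$ to only those $a\equiv 4\pmod{16}$ with $|a|<732$, and after the height search exactly four non-torsion points on two curves survive. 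Your aside that this is the template for Theorem~\ref{thm2} is right only in outline: what is being generalised there is not the finite check but the source of the lower bound --- Lemma~\ref{ndp} rests on the reducibility of $\psi_5$, and Theorem~\ref{thm2} replaces it by Proposition~\ref{Bmk}, which exploits the reducibility of $\psi_p$ for any prime $p\equiv 1\pmod 4$ (Lemmas~\ref{2p-2} and~\ref{split}) and is strong enough that, for $n>p$, no residual point-by-point verification is needed.
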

\begin{proof}
	Suppose that $B_{35}(E_a,P)$ does not have a primitive divisor. Then (\ref{dis1}) must hold for $n=35$. Substituting $n=35$ in (\ref{dis1}), we obtain
	\begin{equation}\label{dis2}
	\hat{h}(P)\leq \frac{30\log \abs{a}+41.1}{734}.
	\end{equation}
	Using Lemma \ref{h},
	\[
	\frac{\log \abs{a}-\log 4}{16}\leq \hat{h}(P)
	\]
	and the inequality 
	\[
	\frac{\log \abs{a}-\log 4}{16}\leq \frac{30\log \abs{a}+41.1}{734}
	\]
	does not hold for $\abs{a}\geq 732$. So, $B_{35}(E_a,P)$ has a primitive divisor if $\abs{a}\geq 732$. Using \cite[Theorem 1.2]{yabutavoutier2}, for all $a$ except those satisfying $a\equiv 4 \mod (16)$,
	\[
	\hat{h}(P)\geq \frac{\log\abs{a}+\log 16}{16}
	\]
	and for this class of curves the inequality
	\[
	 \frac{\log\abs{a}+\log 16}{16}\leq \hat{h}(P)\leq \frac{30\log \abs{a}+41.1}{734}
	\]
	does not hold. In conclusion, $B_{35}(E_a,P)$ has always a primitive divisor except for $a\equiv 4 \mod (16)$ and such that \[732>a>-732.\] Take a point $P$ on $E_a$ such that (\ref{dis2}) holds. From Lemma \ref{C},
	\[
	h(P)\leq 2\hat{h}(P)+2C\leq 0.59\log\abs{a}+0.64.
	\]
	Consider all the couples $(x,a)$ with $x\in \Q$ such that  $a\equiv 4 \mod (16)$, $732>a>-732$, $h(x)\leq 0.59\log\abs{a}+0.64$ and $x^3+ax$ is a rational square. This is a finite set that can be easily compute. If a point $P\in E_a(\Q)$ satisfies (\ref{dis2}), then $(x(P),a)$ must belong to this finite set. Using PARI/GP, we can check for such points if the inequality (\ref{dis2}) holds. It turns out that the only non-torsion points (up to inverse) where the inequality holds is for $(6,36),(30,180)$ in $E_{180}$ and $(6,12)$,$(-2,4)$ in $E_{-12}$. So, we need to check that $B_{35}(E_a,P)$ has a primitive divisor for each of the previous $4$ cases. For such cases, we explicitly compute $B_{35}$ and we check that there is a primitive divisor. In order to do so we use PARI/GP. If $x(P)=u/v$ and $p$ divides $\psi_{35}(u,v)/(\psi_5(u,v)\psi_7(u,v))$ but does not divide $\Delta$, then thanks to (\ref{psi}) this is a primitive divisor of $B_{35}(E_a,P)$. To compute $\psi_{35}(u,v)/(\psi_5(u,v)\psi_7(u,v))$ we use the command "elldivpol" of PARI/GP.
	For example, $139$ is a primitive divisor of $B_{35}(E_{-12},(6,12))$. The other cases are analogous.
\end{proof}
	
	Now, it remains to study the case $n\leq30$. The cases with $n\leq 25$ are proved at the beginning of the proof of \cite[Lemma 5.1]{yabutavoutier}. We want to use the same ideas for the case $n=30$ too, but we need some preliminary lemmas. Recall that $g_n=\gcd(\phi_n(u,v),v\psi_n^2(u,v))$ with $x(P)=u/v$.
	
	The strategy for proving the case $n=30$ is the following. Firstly, we study the sequence of the $g_n$. Secondly, we define a sequence of polynomials $\Psi_n(X,Y)$ and we show that if $B_n$ does not have a primitive divisor, then the equation $\Psi_n(X,Y)=d$ has a solution, for $d$ that ranges in a finite set that depends only on $n$. Finally, we show that the equations $\Psi_{30}(X,Y)=d$ does not have any solution and then $B_{30}$ has a primitive divisor.
\begin{lemma}\label{eq}
	For every $n$ odd,
	\[
	\abs{A_nB_2-A_2B_n}^2= 4^\delta B_{n+2}B_{\abs{n-2}}
	\]
	and for every $n$,
	\[
	\abs{A_nB_4-A_4B_n}^2= B_{n+4}B_{\abs{n-4}}
	\]
	where $\delta\in\{0,1\}$ is a constant that depends only on $E$ and $P$.
\end{lemma}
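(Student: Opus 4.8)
The plan is to derive both identities in parallel from Lemma~\ref{phipsi}, applied once with $m=2$ (taking $n$ odd, so automatically $n\neq m$) and once with $m=4$. Write $x(P)=u/v$ with $(u,v)=1$, and recall $v>0$. By (\ref{psi}) and (\ref{defgn}), $v\psi_k^2(u,v)=g_kB_k$ for every $k\ge1$; since $v\psi_k^2(u,v)>0$, $B_k>0$ and $g_k>0$, comparing with $x(kP)=\phi_k(u,v)/(v\psi_k^2(u,v))=A_k/B_k$ forces also $\phi_k(u,v)=g_kA_k$, signs included. Both sides of the polynomial identity of Lemma~\ref{phipsi} are homogeneous of the same degree in $x$, so homogenizing and setting $x=u/v$ yields, with no leftover power of $v$,
\[
(\phi_n(u,v)\psi_m^2(u,v)-\psi_n^2(u,v)\phi_m(u,v))^2=\psi_{n+m}^2(u,v)\,\psi_{\abs{n-m}}^2(u,v).
\]
Multiplying by $v^2$, the left side becomes $(\phi_n(u,v)\cdot v\psi_m^2(u,v)-v\psi_n^2(u,v)\cdot\phi_m(u,v))^2=g_n^2g_m^2(A_nB_m-A_mB_n)^2$, and the right side becomes $(v\psi_{n+m}^2(u,v))(v\psi_{\abs{n-m}}^2(u,v))=g_{n+m}g_{\abs{n-m}}B_{n+m}B_{\abs{n-m}}$. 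Thus
\[
g_n^2\,g_m^2\,(A_nB_m-A_mB_n)^2=g_{n+m}\,g_{\abs{n-m}}\,B_{n+m}B_{\abs{n-m}}.
\]

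So the lemma reduces to showing that the positive rational $g_{n+m}g_{\abs{n-m}}/(g_ng_m)^2$ equals $4$ or $1$ when $m=2$ (one value, uniform over odd $n$) and equals $1$ when $m=4$. By Lemma~\ref{res}, every $g_k$ divides a power of $\Delta=-64a^3$, hence $v_p(g_k)=0$ for all primes $p\nmid2a$; so only the finitely many $p\mid2a$ matter, and for each such $p$ one must evaluate
\[
\epsilon_p(n,m):=v_p(g_{n+m})+v_p(g_{\abs{n-m}})-2v_p(g_n)-2v_p(g_m).
\]
For an odd prime $p\mid a$ the curve $E_a$ has additive reduction, with only a few possible local types since $j=1728$, and a local computation of the $p$-adic valuations of $\phi_k(u,v)$ and $v\psi_k^2(u,v)$ — via the formal group at $p$, or directly from the recursions obeyed by the division polynomials of Lemma~\ref{divpol} — shows $\epsilon_p(n,m)=0$. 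At $p=2$ the same bookkeeping gives an $\epsilon_2(n,m)$ that is a nonnegative even integer, bounded and independent of $n$: for $m=2$ it is $0$ or $2$, which yields the factor $4^\delta$ with $\delta\in\{0,1\}$ depending only on $E_a$ and $P$; for $m=4$ the factorization of $\psi_4$ through $\psi_2$ — so that $\psi_4^2(u,v)$ is $\psi_2^2(u,v)$ times a perfect square — supplies the extra even power of $2$ and forces $\epsilon_2(n,4)=0$.

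The real difficulty is precisely this last step: controlling $(g_n)$ at the bad primes. I would isolate it as a separate lemma on the sequence $(g_n)$, stating that for each $p\mid2a$ the function $n\mapsto v_p(g_n)$ is rigid enough — being determined by the reduction type of $E_a$ at $p$ and by the reduction of $P$ — that $\epsilon_p(n,m)$ vanishes for odd $p$ and stabilizes, to $0$ or $2$, at $p=2$; I would prove it by running through the handful of possible local pictures, comparing $p$-adic valuations across Lemma~\ref{phipsi} and using Lemma~\ref{res} together with the vanishing of $v_p(g_k)$ at good primes. Granting that lemma, both displayed formulas of Lemma~\ref{eq} follow by substituting into the identity above, with $4^\delta=2^{\epsilon_2(n,2)}$ fixed once and for all by $E_a$ and $P$; the only edge case, $n=m$ (which can occur only for $m=4$, $n=4$), is covered by the convention $B_0=0$.
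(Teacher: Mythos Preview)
The paper does not prove Lemma~\ref{eq}; it simply cites \cite[Lemma~3.5]{yabutavoutier}. So there is no ``paper's proof'' to compare with, only the logical role this lemma plays in the paper.

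Your derivation of
\[
g_n^2\,g_m^2\,(A_nB_m-A_mB_n)^2=g_{n+m}\,g_{\abs{n-m}}\,B_{n+m}B_{\abs{n-m}}
\]
from Lemma~\ref{phipsi} and (\ref{psi}) is correct --- in fact, exactly this computation appears in the paper as the unnamed lemma immediately \emph{after} Lemma~\ref{eq}. But notice the direction: the paper takes Lemma~\ref{eq} as input (from the cited reference) and combines it with this same identity to \emph{deduce} the relations $g_{n+4}g_{\abs{n-4}}=g_4^2g_n^2$ and $g_{n+2}g_{\abs{n-2}}=4^\delta g_2^2g_n^2$. You propose the reverse: establish those $g$-relations first, then read off Lemma~\ref{eq}. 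The two statements are visibly equivalent given the displayed identity, so this is a legitimate strategy in principle.

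The gap is that you do not actually prove the $g$-relations. Everything after ``So the lemma reduces to\ldots'' is a sketch: you assert that a case-by-case local analysis at primes $p\mid 2a$ will show $\epsilon_p(n,m)=0$ for odd $p$ and $\epsilon_2(n,m)\in\{0,2\}$ independent of $n$, but you carry out none of it. The phrases ``a local computation \ldots\ shows'', ``the same bookkeeping gives'', and ``supplies the extra even power of $2$'' are placeholders, not arguments. This is not a minor omission: it is the entire content of the lemma once the $\psi$-identity is in hand, and you yourself flag it as ``the real difficulty''. Since within this paper the only available proof of the $g$-relations runs \emph{through} Lemma~\ref{eq}, your proposal as written is either incomplete (if the local analysis is genuinely independent but unwritten) or circular (if one reaches for the paper's subsequent lemmas to fill it in). To make the argument self-contained you would have to actually execute the reduction-type casework you describe, which for $j=1728$ is feasible but not short.
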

\begin{proof}
	See \cite[Lemma 3.5]{yabutavoutier}.
\end{proof}
\begin{lemma}
	For every $n$,
	\[
	g_{4+n}g_{\abs{4-n}}= g_4^2g_n^2
	\]
	and for every $n$ odd
	\[
	g_{2+n}g_{\abs{2-n}}= 4^\delta g_2^2g_n^2.
	\]	
\end{lemma}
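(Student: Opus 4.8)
The plan is to derive both identities directly from the division-polynomial identity of Lemma~\ref{phipsi} together with the two sequence identities of Lemma~\ref{eq}, the only new input being the elementary relations
\[
A_k=\frac{\phi_k(u,v)}{g_k},\qquad B_k=\frac{v\psi_k^2(u,v)}{g_k}\qquad (x(P)=u/v,\ (u,v)=1),
\]
which follow from \eqref{psi}, from $x(kP)=\phi_k(u,v)/(v\psi_k^2(u,v))$, and from the fact that $A_k/B_k$ is exactly this fraction in lowest terms. Since $v$ is a square and $\psi_k^2(u,v)>0$ for a non-torsion point, each of $v$, $g_k$, $\psi_k^2(u,v)$, $B_k$ is strictly positive, so no sign ambiguities occur.

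First I would homogenize Lemma~\ref{phipsi}. For $m\neq n$ the polynomial $\phi_n(x)\psi_m^2(x)-\psi_n^2(x)\phi_m(x)$ has degree exactly $n^2+m^2-1$ (leading coefficient $m^2-n^2$), so evaluating the identity of Lemma~\ref{phipsi} at $x=u/v$ and clearing denominators, bearing in mind $\deg\phi_n=n^2$ and $\deg\psi_n^2=n^2-1$, gives
\[
\bigl(\phi_n(u,v)\psi_m^2(u,v)-\psi_n^2(u,v)\phi_m(u,v)\bigr)^2=\psi_{n+m}^2(u,v)\,\psi_{\abs{n-m}}^2(u,v)
\]
with no spurious power of $v$ (both sides have degree $2(n^2+m^2-1)$).

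Next I would substitute the formulas for $A_k$ and $B_k$ into the cross term; a factor $v/(g_ng_m)$ comes out,
\[
A_nB_m-A_mB_n=\frac{v}{g_ng_m}\bigl(\phi_n(u,v)\psi_m^2(u,v)-\psi_n^2(u,v)\phi_m(u,v)\bigr),
\]
so squaring, using the homogenized identity above and $v\psi_k^2(u,v)=g_kB_k$ from \eqref{psi}, produces
\[
(A_nB_m-A_mB_n)^2=\frac{v^2}{g_n^2g_m^2}\,\psi_{n+m}^2(u,v)\,\psi_{\abs{n-m}}^2(u,v)=\frac{g_{n+m}\,g_{\abs{n-m}}}{g_n^2\,g_m^2}\,B_{n+m}\,B_{\abs{n-m}}.
\]

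Finally I would compare with Lemma~\ref{eq}. For $m=4$ and any $n\neq4$, Lemma~\ref{eq} gives $(A_nB_4-A_4B_n)^2=B_{n+4}B_{\abs{n-4}}$; cancelling the positive factor $B_{n+4}B_{\abs{n-4}}$ yields $g_{n+4}g_{\abs{n-4}}=g_4^2g_n^2$. For $m=2$ and $n$ odd, Lemma~\ref{eq} gives $(A_nB_2-A_2B_n)^2=4^\delta B_{n+2}B_{\abs{n-2}}$; cancelling yields $g_{n+2}g_{\abs{n-2}}=4^\delta g_2^2g_n^2$. When $n$ is odd one has $\abs{n-2}\geq1$ automatically, so nothing is lost there; the only value not covered by this argument is $n=4$ in the first identity, where $\abs{n-4}=0$, and one then adopts the convention $g_0:=1$ (forced by the instance $n=0$ of the identity) and disposes of this degenerate case separately. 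I expect no genuine obstacle here: the only real work is the degree bookkeeping in the homogenization step, and all the arithmetic content is already packaged in Lemmas~\ref{phipsi} and \ref{eq}.
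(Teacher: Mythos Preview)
Your argument is correct and is essentially the same as the paper's: both proofs homogenize the division-polynomial identity of Lemma~\ref{phipsi}, rewrite $A_nB_m-A_mB_n$ via $A_k=\phi_k(u,v)/g_k$ and $B_k=v\psi_k^2(u,v)/g_k$, and then cancel against Lemma~\ref{eq}. You are slightly more explicit than the paper about the degree bookkeeping in the homogenization and about the degenerate case $n=4$, but there is no substantive difference in method.
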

\begin{proof}
Thanks to (\ref{psi}),
	\[
	g_{n+4}g_{\abs{n-4}} B_{n+4}B_{\abs{n-4}}=v^2\psi_{n+4}^2(u,v)\psi_{\abs{n-4}}^2(u,v)
	\]
	and using Lemma \ref{phipsi}
		\[
	v^2\psi_{n+4}^2(u,v)\psi_{\abs{n-4}}^2(u,v)=\abs{v\phi_n(u,v)\psi_4^2(u,v)-v\psi_n^2(u,v)\phi_4(u,v)}^2.
\]
	Using again (\ref{psi}),
	\[
	\abs{v\phi_n(u,v)\psi_4^2(u,v)-v\psi_n^2(u,v)\phi_4(u,v)}^2=g_n^2g_4^2\abs{A_nB_4-A_4B_n}^2
	\]
	and we conclude using Lemma \ref{eq} since
	\[
	g_n^2g_4^2\abs{A_nB_4-A_4B_n}^2= g_n^2g_4^2B_{n+4}B_{\abs{n-4}}.
	\]
		The other case is analogous.
\end{proof}
\begin{lemma}\label{gn}
	For $n$ odd,
	\[
	g_n=(2^\delta g_2)^\frac{n^2-1}{4}
	\]
	and, for $n\equiv 2 \mod{4}$,
	\[
	g_n=g_2g_4^{\frac{n^2-4}{16}}
	\]
	where $\delta$ is as in Lemma \ref{eq}.
\end{lemma}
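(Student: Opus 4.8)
The plan is to use the two multiplicative recursions just established for the sequence $(g_n)$, together with the trivial initial data $g_1=1$ and $g_3$, $g_5$, $\dots$ computed from the odd recursion, in order to solve the recurrences explicitly by induction. I would treat the two cases separately but in parallel.

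First, the odd case. Set $h:=2^\delta g_2$; the goal is to show $g_n=h^{(n^2-1)/4}$ for all odd $n$. Note $g_1=\gcd(\phi_1(u,v),v\psi_1^2(u,v))=\gcd(u,v)=1$ since $\psi_1=1$ and $\phi_1=x$, so the formula holds for $n=1$ (exponent $0$). For $n=3$, take $n'=1$ in the odd relation $g_{2+n'}g_{|2-n'|}=4^\delta g_2^2 g_{n'}^2$ with $n'=1$: this gives $g_3 g_1 = 4^\delta g_2^2$, i.e. $g_3=(2^\delta g_2)^2=h^2$, matching $(9-1)/4=2$. Now proceed by strong induction on odd $n\geq 5$: apply the odd relation with the odd index $n-2$ in the role of ``$n$'', namely $g_{(n-2)+2}\,g_{(n-2)-2}=4^\delta g_2^2 g_{n-2}^2$, which reads $g_n\, g_{n-4}=4^\delta g_2^2 g_{n-2}^2=h^2 g_{n-2}^2$. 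By the inductive hypothesis $g_{n-2}=h^{((n-2)^2-1)/4}$ and $g_{n-4}=h^{((n-4)^2-1)/4}$, so
\[
g_n = h^{2}\cdot h^{2\cdot\frac{(n-2)^2-1}{4}}\cdot h^{-\frac{(n-4)^2-1}{4}}
= h^{\frac{8+2(n-2)^2-2-(n-4)^2+1}{4}} = h^{\frac{n^2-1}{4}},
\]
where the exponent arithmetic $8+2(n-2)^2-2-(n-4)^2+1 = n^2-1$ is a routine check (it is a quadratic identity in $n$, so it suffices to verify it at three values, or just expand). This closes the induction; since every odd $n\geq 5$ is reached from $n-2$ and $n-4$, both odd and already handled, we are done. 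One should also remark that $g_n$ is a non-negative integer and $h\mid g_n$ whenever the exponent is positive, so the division above is legitimate inside $\Z$ (alternatively, argue the identity in $\Q$ and note both sides are integers).

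For the case $n\equiv 2\pmod 4$, use the other recursion $g_{4+n}g_{|4-n|}=g_4^2 g_n^2$. Write $n=2,6,10,\dots$; the claim is $g_n=g_2 g_4^{(n^2-4)/16}$. For $n=2$ the exponent is $0$ and the formula is the tautology $g_2=g_2$. For $n=6$, apply the relation with index $2$ in the role of ``$n$'': $g_6 g_2 = g_4^2 g_2^2$, so $g_6=g_2 g_4^2$, and $(36-4)/16=2$ as required. Then strong induction on $n\equiv 2\pmod 4$, $n\geq 10$: the relation with index $n-4$ gives $g_n g_{n-8}=g_4^2 g_{n-4}^2$, and substituting $g_{n-4}=g_2 g_4^{((n-4)^2-4)/16}$, $g_{n-8}=g_2 g_4^{((n-8)^2-4)/16}$ yields, after cancelling one factor $g_2^2/g_2$... — more carefully, $g_n = g_4^2 g_{n-4}^2 / g_{n-8} = g_4^2\cdot g_2^2 g_4^{2((n-4)^2-4)/16}\cdot g_2^{-1}g_4^{-((n-8)^2-4)/16} = g_2\, g_4^{(n^2-4)/16}$, again a routine exponent identity. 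This completes both inductions. I do not anticipate a genuine obstacle here: the only points requiring care are (i) checking the base cases $g_1,g_3$ and $g_2,g_6$ correctly against the recursions, and (ii) the bookkeeping to ensure the divisions are taking place within $\Z$ rather than merely $\Q$, which follows because each $g_m$ divides the relevant product by the recursion itself. The quadratic exponent identities are the most tedious part but are entirely mechanical.
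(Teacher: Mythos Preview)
Your proof is correct and follows essentially the same approach as the paper: strong induction using the two recursions $g_{n+2}g_{|n-2|}=4^\delta g_2^2 g_n^2$ (odd case) and $g_{n+4}g_{|n-4|}=g_4^2 g_n^2$ (case $n\equiv 2\pmod 4$), together with the base value $g_1=1$. The paper's write-up is terser---it verifies $g_1=1$, writes one inductive step for the odd case, and declares the even case analogous---but the argument is the same.
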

\begin{proof}
	We will prove the first equation by induction. Thanks to the definition, $g_1=\gcd(A_1,B_1)=1$ and so the lemma holds for $n=1$. If it holds until $n$, then
	\[
	g_{n+2}=\frac{4^\delta g_2^2g_n^2}{g_{n-2}}=\frac{(2^\delta g_2)^2(2^\delta g_2)^{2\frac{n^2-1}{4}}}{(2^\delta g_2)^\frac{(n-2)^2-1}{4}}=(2^\delta g_2)^{\frac{(n+2)^2-1}{4}}.
	\]
	The other case is analogous.
\end{proof}
\begin{lemma}\label{g2g4}
	Given a prime $p$ and a non-zero integer $x$, we denote with $\ord_p(x)$ the biggest integer $k$ such that $p^k\mid x$. We have
	\[\ord_2(g_4)\leq 5\ord_2(g_2)+6.\]
\end{lemma}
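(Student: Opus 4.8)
The plan is to determine $g_4$ \emph{exactly} rather than merely estimate it: the multiplicative relations for the $g_n$ proved above are rigid enough to force $g_4$ to be a fixed power of $g_2$, up to a power of $2$ governed by the constant $\delta$, after which the $2$-adic valuation is immediate.

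First I would compute $g_3$ and $g_5$. Using the relation $g_{2+n}g_{|2-n|}=4^\delta g_2^2 g_n^2$ for odd $n$, together with the fact that $g_1=\gcd(\phi_1(u,v),v\psi_1^2(u,v))=\gcd(u,v)=1$, the case $n=1$ gives $g_3=4^\delta g_2^2$, and then the case $n=3$ gives $g_5=4^\delta g_2^2 g_3^2=4^{3\delta}g_2^6$. (These are just the instances $n=3,5$ of the closed form $g_n=(2^\delta g_2)^{(n^2-1)/4}$ in Lemma~\ref{gn}, so one could also cite that lemma directly.) Next I would apply $g_{4+n}g_{|4-n|}=g_4^2 g_n^2$ with $n=1$; since $g_1=1$ this reads
\[
g_4^2=g_5 g_3=4^{3\delta}g_2^6\cdot 4^\delta g_2^2=4^{4\delta}g_2^8=\bigl(2^{4\delta}g_2^4\bigr)^2 .
\]
Since $g_4$ and $g_2$ are positive integers, this forces $g_4=2^{4\delta}g_2^4$.

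Finally, taking $2$-adic valuations and using $\delta\in\{0,1\}$ (Lemma~\ref{eq}),
\[
\ord_2(g_4)=4\delta+4\,\ord_2(g_2)\le 4+4\,\ord_2(g_2)\le 6+5\,\ord_2(g_2),
\]
which is the assertion, with a little room to spare. I do not expect a genuine obstacle here: the only points requiring attention are that the relations are being invoked at arguments where every $g_k$ that appears is defined (here $|2-1|,\,|4-1|\ge 1$, so this is fine) and that $g_1=1$, which is immediate from $(u,v)=1$; the rest is bookkeeping.
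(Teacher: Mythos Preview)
Your argument is correct, and it takes a genuinely different route from the paper's. The paper proves the lemma by writing out $\phi_2,\psi_2^2,\phi_4,\psi_4^2$ explicitly and doing a direct $2$-adic case analysis according to $\min\{\ord_2(a)/2,\ord_2(u)\}$, bounding $\ord_2(g_2)$ from below and $\ord_2(g_4)$ from above in each case. You instead exploit the multiplicative recurrences for the $g_n$ that were already established: from $g_3=4^\delta g_2^2$, $g_5=4^{3\delta}g_2^6$ and $g_5g_3=g_4^2$ you obtain the exact identity $g_4=2^{4\delta}g_2^4$, and the valuation bound drops out. Your approach is shorter, avoids any explicit polynomial computation, and in fact yields the sharper inequality $\ord_2(g_4)\le 4\ord_2(g_2)+4$; the paper's stated bound $5\ord_2(g_2)+6$ is then a fortiori. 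The only thing to verify is that the recurrence $g_{4+n}g_{|4-n|}=g_4^2g_n^2$ is available at $n=1$ (it is, as stated ``for every $n$'' and its proof does not exclude small $n$), and that $g_1=1$, which you note follows from $(u,v)=1$.
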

\begin{proof}
	Using \cite[Exercise 3.7]{arithmetic}, we can explicitly compute $\phi_2$, $\psi_2$, $\phi_4$ and $\psi_4$. We have \[\psi_2^2(u,v)=u^3+auv^2,\] \[\phi_2(u,v)=u^4-2au^2v^2+a^2v^4,\] \[\psi_4^2(u,v)=4(u^3+auv^2)(u^6+5au^4v^2-5a^2u^2v^4-a^3v^6)^2,\] and \[\phi_4(u,v)=(u^8-20au^6v^2-26a^2u^4v^4-20a^3u^2v^6+a^4v^8)^2.\]
	Observe that, if $u$ and $a$ are not both even, then
	\[
	\ord_2(g_4)\leq 6
	\]
	since the equation \[\psi_4^2(u,v)\equiv \phi_4(u,v)\equiv 0\mod{64}\]
	does not have non-trivial solutions by direct computation of all the possible cases modulo $64$.
	Define \[k:=\min\{\ord_2(a)/2,\ord_2(u)\}.\] Therefore, by definition $\ord_2(\phi_2(u,v))\geq 3k$ and $\ord_2(\psi_2^2(u,v))\geq 3k$. So,
	\[
	\ord_2(g_2)\geq 3k.
	\]
	If $k=\ord_2(u)\neq\ord_2(a)/2$, then $\ord_2(\psi_4^2)=2+15k$. If $k=\ord_2(a)/2\neq \ord_2(u)$, then $\ord_2(\psi_4^2)=2+14k$. In both cases \[\ord_2(g_4)\leq \ord_2(\psi_4^2)\leq 15k+2.\] It remains the case when $k=\ord_2(a)/2=\ord_2(u)$. Put $a'=a/2^{2k}$ and $u'=u/2^k$ and hence $g_4=2^{15k}\gcd (\phi_4(u',v,a'),\psi_4^2(u',v,a'))$ where with $\phi_4(u',v,a')$ and $\psi_4^2(u',v',a')$ we denote $\phi_4$ and $\psi_4^2$ where we substitute $a$ with $a'$. In this case we can use the previous result on the $\gcd$ in the case when $u$ and $a$ are not even, concluding that
	\[
	\ord_2(\gcd (\phi_4(u',v,a'),\psi_4^2(u',v,a')))\leq 6.
	\]
	In conclusion,
	\[
	\ord_2(g_4)\leq 15k+6\leq 5\ord_2(g_2)+6.
	\]
\end{proof}

The aim of next lemmas is to replicate the work of Ingram in \cite[Section 2]{ingram2}. We want to improve \cite[Lemma 5]{ingram2}. We will use the ideas of Ingram and our work on the sequence of the $g_n$. 

\begin{lemma}\label{erringram}
	Fix $n>2$. Consider the polynomial
	\[
	\Psi_n(x):=\frac{\psi_n(x,y)}{\lcm_{l\mid n}\psi_{n/l}(x,y)}.
	\]
	This polynomial depends only on $x$ and if a prime $p$ divides $\Psi_n(1)\rvert_{a=-1}$, then $p$ divides $2n$. Moreover, if a prime $p$ divides $\Psi_n(0)\rvert_{a=1}$, then $p$ divides $2n$.
\end{lemma}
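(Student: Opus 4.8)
The plan is to recognise $\Psi_n$ as the \emph{primitive part} of the $n$-th division polynomial and then to convert the asserted bound on its prime divisors into a statement about the order of a $2$-torsion point on the reduction of $E_{-1}$ (resp.\ $E_1$) modulo $p$. First I would invoke the factorisation of division polynomials into primitive parts, $\psi_n=\prod_{d\mid n}\Psi_d$, where $\Psi_1=1$, $\Psi_2=\psi_2=2y$, each $\Psi_d$ with $d>2$ lies in $\Z[x,a]$, and the $\Psi_d$ are pairwise coprime because the roots of $\Psi_d$ are the abscissas of the points of exact order $d$. Since $\psi_e\mid\psi_f$ whenever $e\mid f$, one gets $\lcm_{l\mid n}\psi_{n/l}=\prod_{d\mid n,\,1<d<n}\Psi_d$, so the polynomial in the statement is exactly $\Psi_n$; and for $n>2$ the only primitive part carrying a factor of $y$ is $\Psi_2$, which also divides $\lcm_{l\mid n}\psi_{n/l}$ whenever $n$ is even (for even $n>2$ there is always a prime $l\mid n$ with $n/l$ even), so the $y$'s in numerator and denominator cancel and $\Psi_n\in\Z[x,a]$ — this proves that $\Psi_n$ depends only on $x$. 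From the same factorisation together with Lemma \ref{divpol}(d) I would also record that the leading coefficient of $\Psi_n$ divides $n$ (it equals $1$ unless $n$ is a prime power).

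Next, the main step. Fix a prime $p\nmid 2n$; I claim $p\nmid\Psi_n(1)\rvert_{a=-1}$, and the statement for $\Psi_n(0)\rvert_{a=1}$ is identical after replacing $E_{-1}\colon y^2=x^3-x$ and the point $(1,0)$ by $E_1\colon y^2=x^3+x$ and the point $(0,0)$. The curve $E_{-1}$ has discriminant $\Delta=64$, hence good reduction at the odd prime $p$, and $T=(1,0)$ is a nontrivial $2$-torsion point. Since $p\nmid 2$, reduction modulo $p$ is injective on $2$-torsion \cite[Proposition VII.3.1]{arithmetic}, so $\tilde E_{-1}$ still contains a reduction $\tilde T\in\tilde E_{-1}(\mathbb{F}_p)$ of exact order $2$. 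On the other hand, since division polynomials have integer coefficients that depend polynomially on $a$, the reduction of $\psi_n\rvert_{a=-1}$ modulo $p$ is the $n$-th division polynomial of $\tilde E_{-1}$; as $p\nmid n$ this polynomial has distinct roots, and the factorisation into primitive parts reduces modulo $p$, so (using that $p$ does not divide the leading coefficient of $\Psi_n$) the reduction $\overline{\Psi_n}$ of $\Psi_n\rvert_{a=-1}$ has as its roots in $\overline{\mathbb{F}_p}$ precisely the abscissas of the points of exact order $n$ on $\tilde E_{-1}(\overline{\mathbb{F}_p})$. Therefore, if $p\mid\Psi_n(1)\rvert_{a=-1}$ then $\overline{\Psi_n}(1)=0$, so there is a point of exact order $n$ with abscissa $1$; but the only points with abscissa $1$ are $\pm\tilde T$, of order $2$, forcing $n=2$ and contradicting $n>2$. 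The contrapositive is the lemma.

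I expect the real work to lie entirely in the first paragraph: checking that the quotient displayed in the statement coincides with the primitive part $\Psi_n$, that for $n>2$ it is a genuine element of $\Z[x,a]$ with no $y$, that its leading coefficient divides $n$, and that passing to primitive parts commutes with reduction modulo a prime $p\nmid n$ (so that $\overline{\Psi_n}$ has exactly the roots claimed). These are standard facts about division polynomials, but a little care is needed because the least common multiple in the definition is taken in a ring in which one must track contents and the single relation $y^2=x^3+ax$. Once they are in place, the reduction argument for the $2$-torsion point $T$ is immediate.
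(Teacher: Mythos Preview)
Your approach is correct and genuinely different from the paper's. The paper argues computationally: after stripping the factor $y$ from $\psi_n$ when $n$ is even to get $p_n\in\Z[x,a]$, it sets $h_k:=p_k(1)\rvert_{a=-1}$, writes down the four recurrences for $h_{4k},h_{4k+1},h_{4k+2},h_{4k+3}$ inherited from the standard recurrences for the $\psi_n$, and proves by induction the closed formulas $h_{2k}=(-1)^{k-1}k\cdot 2^{k^2}$ and $h_{2k+1}=(-1)^k 2^{k(k+1)}$ (and similarly $j_{2k}=(-1)^{k-1}\cdot 2k$, $j_{2k+1}=(-1)^k$ for the evaluation at $x=0$, $a=1$). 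Since $\Psi_n$ divides $p_n$, any prime dividing $\Psi_n(1)\rvert_{a=-1}$ divides $h_n$, and the explicit formula shows its only prime factors lie among $2$ and the primes dividing $n$.

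Your argument instead recognises $(1,0)\in E_{-1}$ and $(0,0)\in E_1$ as rational $2$-torsion points, reduces modulo a prime $p\nmid 2n$ (both curves have good reduction there since $\Delta=\pm 64$), and uses that the roots of $\overline{\Psi_n}$ are exactly the abscissas of points of exact order $n$ on the reduced curve to derive a contradiction with the reduced point still having order $2$. The paper's route is entirely elementary---only polynomial recurrences---and as a bonus yields the exact values of $h_n$ and $j_n$; your route is more conceptual, explains \emph{why} the two specific evaluations $(x,a)=(1,-1)$ and $(0,1)$ work (they are $x$-coordinates of rational $2$-torsion on curves with good reduction away from $2$), and visibly generalises to any rational $2$-torsion point on any curve. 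The bookkeeping you flag---that $\Psi_n\in\Z[x,a]$ with leading coefficient dividing $n$, and that the factorisation $\psi_n=\prod_{d\mid n}\Psi_d$ into primitive parts survives reduction modulo $p\nmid n$---is indeed where the work lies, but it goes through: the key point is that the reduced $\tilde\psi_n$ is separable and the leading coefficients of the $\Psi_d$ all divide $n$, so degrees are preserved and an induction on $d\mid n$ identifies $\overline{\Psi_d}$ with the primitive part over $\mathbb{F}_p$.
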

\begin{proof}
 We start by showing that the polynomial depends only on $x$. If $n$ is odd, then we conclude easily observing that $\psi_k$ depends only on $x$ if $k$ is odd. If $n=2^k$, then $k\geq 2$ for the hypothesis $n>2$ and so $\Psi_n=\psi_n/\psi_{n/2}$. We conclude by using that for $n$ even the polynomial $\psi_n$ is in the form $yp_n(x)$, where $p_n$ depends only on $x$, thanks to part (c) of Lemma \ref{divpol}. If $n=2^kd$ with $d$ odd and greater than $1$, then $y$ divides $\lcm_{l\mid n}\psi_{n/l}(x,y)$ since it divides $\psi_{n/l}(x,y)$ for every prime divisor $l$ of $d$ and then we argue as in the previous case. Define $p_n(x)=\psi_n(x)$ for $n$ odd and $p_n(x)=\psi_n(x,y)/y$ for $n$ even. Hence, 
\[
\Psi_n(x):=\frac{p_n(x)}{\lcm_{l\mid n}p_{n/l}(x)}.
\]
Now we prove that if $p$ divides $p_n(1)\rvert_{a=-1}$, then $p$ divides $2n$. Define $h_k=p_k(1)\rvert_{a=-1}$. Using the recurrence law on the $\psi_n$, we obtain, for $k\geq 1$,
\begin{itemize}
	\item $h_{4k+1}=-h_{2k-1}h_{2k+1}^3$;
	\item $h_{4k+3}=h_{2k+3}h_{2k+1}^3$;
	\item $h_{4k}=\frac{h_{2k}}{2}(h_{2k+2}h_{2k-1}^2-h_{2k-2}h_{2k+1}^2)$ if $k\neq 1$;
	\item $h_{4k+2}=\frac{h_{2k+1}}{2}(h_{2k+3}h_{2k}^2-h_{2k-1}h_{2k+2}^2)$.
\end{itemize}
We briefly show how to obtain the first equality, all the others are analogous. Using \cite[Exercise 3.7]{arithmetic} and the definition of $p_n$, we know that
\begin{align*}
p_{4k+1}(x)&=\psi_{4k+1}(x)\\&=\psi_{2k+2}(x,y)\psi_{2k}^3(x,y)-\psi_{2k-1}(x)\psi_{2k+1}^3(x)\\&=y^4p_{2k+2}(x)p_{2k}^3(x)-p_{2k-1}(x)p_{2k+1}^3(x)\\&=(x^3+ax)^2p_{2k+2}(x)p_{2k}^3(x)-p_{2k-1}(x)p_{2k+1}^3(x).
\end{align*}
Evaluating the equation in $x=1$ and $a=-1$ we obtain 
\[
h_{4k+1}=-h_{2k-1}h_{2k+1}^3.
\]
Now, explicitly writing the first terms on the sequence of the division polynomials, we have $h_1=1$, $h_2=2$, $h_3=-4$ and $h_4=-32$. 
By induction, it is easy to check that $h_{2k}=(-1)^{k-1}k2^{k^2}$ and $h_{2k+1}=(-1)^k2^{k(k+1)}$. For example,
\[
h_{4k+1}=-h_{2k-1}h_{2k+1}^3=(-1)^{4k}2^{k(k-1)+3k(k+1)}=(-1)^{2k}2^{2k(2k+1)}
\] 
where the second equality follows by induction. The other cases are analogous. So, if $p$ divides $p_n(1)\rvert_{a=-1}=h_n$, then it divides $2n$. Therefore, if $p$ divides $\Psi_n(1)\rvert_{a=-1}$, then it divides $2n$.

Now we want to study $\Psi_n(0)\rvert_{a=1}$. Define $j_n=p_n(0)\rvert_{a=1}$, that
satisfies the same recurrence relations as $h_n$. By induction, it is easy to prove that $j_{2k}=(-1)^{k-1}2k$ and $j_{2k+1}=(-1)^k$. For example,
\[
j_{4k+1}=-j_{2k-1}j_{2k+1}^3=(-1)^{1+k-1+3(k)}=1=(-1)^{2k}.
\] Hence, we conclude as before.
\end{proof}
Define the polynomial $F_n(u,v,a)$ as the homogenization of $\Psi_n(x)$, i.e.
\[
F_n(u,v,a)=v^{\deg(\Psi_n(x))}\Psi_n\Big(\frac uv\Big).
\]
So, $F_n \in\Z[u,v,a]$. We put $a$ in the variables to emphasize that $F_n$ depends also on $a$. 
\begin{lemma}
	Let $n>2$. The polynomial $F_n(u,v,a)$ can be written as a homogenous polynomial in the variables $u^2$ and $av^2$. This means that there exists a homogenous polynomial $\Psi_n(X,Y)\in \Z[X,Y]$ such that
	\[
	F_n(u,v,a)=\Psi_n(u^2,av^2).
	\]
\end{lemma}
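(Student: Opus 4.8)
The plan is to reduce the statement to a structural fact about the polynomials $p_n$ from the proof of Lemma~\ref{erringram}: for every $n\ge 1$ the polynomial $p_n\in\Z[x,a]$ (equal to $\psi_n$ for $n$ odd and to $\psi_n/y$ for $n$ even) involves only even powers of $x$, and if we write $p_n=P_n(x^2,a)$ then $P_n\in\Z[X,Y]$ is homogeneous of degree $d_n$, where $d_n=(n^2-1)/4$ for $n$ odd and $d_n=(n^2-4)/4$ for $n$ even (both integers). Granting this, the lemma follows quickly. One checks that $Q_n:=\lcm_{l\mid n}p_{n/l}(x)$ is again, up to a nonzero scalar, a polynomial in $x^2$ and $a$ that is homogeneous in those two quantities (see the third paragraph); since $\Psi_n(x)=p_n(x)/Q_n(x)$ is a polynomial, and the quotient of two homogeneous elements of a graded domain is homogeneous whenever it lies in the ring, it follows that $\Psi_n(x)=R_n(x^2,a)$ for a homogeneous $R_n\in\Z[X,Y]$, integrality being inherited from $p_n$ and $\Psi_n(x)$. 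Homogenising, and using homogeneity of $R_n$, $F_n(u,v,a)=v^{\deg_x\Psi_n}\Psi_n(u/v)=v^{2\deg R_n}R_n(u^2/v^2,a)=R_n(u^2,av^2)$, so $\Psi_n(X,Y):=R_n(X,Y)$ works.

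For the structural fact I would argue by strong induction on $n$ using the recurrences for division polynomials from \cite[Exercise~3.7]{arithmetic} (with $B=0$), namely $\psi_{2m+1}=\psi_{m+2}\psi_m^3-\psi_{m-1}\psi_{m+1}^3$ and $2y\,\psi_{2m}=\psi_m(\psi_{m+2}\psi_{m-1}^2-\psi_{m-2}\psi_{m+1}^2)$. The cases $n\le 4$ follow from the explicit forms of $\psi_2,\psi_3,\psi_4$ (for instance $p_3=3x^4+6ax^2-a^2$, giving $P_3=3X^2+6XY-Y^2$, homogeneous of degree $2=d_3$). In the inductive step one substitutes $\psi_k=p_k$ or $\psi_k=y\,p_k$ according to the parity of $k$; in every recursion and for every parity of $m$ the surviving powers of $y$ turn out to be even, so after reducing with $y^2=x^3+ax$ (and cancelling the common $y^2$ from both sides of the even recursion) each term is a product of $p_k$'s with $k<n$ times at most one factor $(x^3+ax)^2=x^2(x^2+a)^2$ — all of these lie in $\Z[x^2,a]$, the harmless $\tfrac12$ occurring in the even case being absorbed since $p_n\in\Z[x,a]$. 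Hence $p_n\in\Z[x^2,a]$. A short bookkeeping of the degrees $d_{m-2},\dots,d_{m+2}$ shows that the two monomials on the right-hand side of each recursion carry the same degree, equal to $d_n$, so $P_n$ is homogeneous. (Alternatively, one may quote that $\psi_n$ is isobaric of weight $n^2-1$ for the weights $x\!:\!2$, $y\!:\!3$, $a\!:\!4$, and note that this weight forces the $x$-exponent to have the parity of $(n^2-1)/2$, resp. $(n^2-4)/2$, which is even.)

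To justify the claim about $Q_n$, consider the ring automorphism $\tau_\lambda\colon x\mapsto\lambda x$, $a\mapsto\lambda^2 a$ of $\Q[x,a]$. By the structural fact $\tau_\lambda$ multiplies each $p_{n/l}$ by the scalar $\lambda^{2d_{n/l}}$, hence multiplies $Q_n$ by a scalar, hence multiplies $\Psi_n=p_n/Q_n$ by a scalar; a polynomial fixed up to a scalar by every $\tau_\lambda$ is isobaric for these weights. Moreover $x\nmid p_n$, since $p_n(0,a)=j_n\,a^{d_n}$ with $j_n\neq 0$, as computed in the proof of Lemma~\ref{erringram}; as $Q_n\mid p_n$ we get $x\nmid Q_n$, so $\tau_{-1}$ fixes $Q_n$ (not $-Q_n$), i.e. $Q_n$ — like $p_n$ — is even in $x$, and then so is $\Psi_n=p_n/Q_n$. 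Being even in $x$ and isobaric, $\Psi_n(x)$ has the form $R_n(x^2,a)$ with $R_n$ homogeneous, as required.

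The main obstacle is the inductive step of the structural fact: one must verify, over both recursions and all parities of the running index $m$, that the powers of $y$ always recombine into an even power of $x^3+ax$ (so that $\psi_{2m+1}$ and $\psi_{2m}/y$ genuinely land in $\Z[x^2,a]$), and then that the two monomials on each right-hand side share the weighted degree $d_n$. This is careful bookkeeping rather than anything deep, but it is where essentially all the content lies; once it is in hand, the passage from $p_n$ to $\Psi_n$ through the $\lcm$ is formal, the only additional input being the non-vanishing of $p_n(0,a)$ recorded above.
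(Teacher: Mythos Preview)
Your proof is correct and follows essentially the same approach as the paper: establish by induction on the division-polynomial recurrences that each $p_n$ is a homogeneous polynomial in $x^2$ and $a$ of degree $d_n$, then pass to $\Psi_n=p_n/\lcm_{l\mid n}p_{n/l}$ and homogenise. The only difference is that you justify more carefully (via the automorphisms $\tau_\lambda$ and the non-vanishing of $j_n$) why the quotient by the $\lcm$ remains even in $x$ and homogeneous, a step the paper simply asserts.
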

\begin{remark}
	The definition of $\Psi_n(X,Y)$ is an abuse of notation since we defined before the polynomial $\Psi_n(x)$. Anyway, we did it because the two polynomials are strictly related. Indeed, the polynomial $\Psi_n(X,Y)$ is the homogeneization of $\Psi_n(x)$ composed with a change of variables. Observe that $\Psi_n(x)=\Psi_n(x^2,a)$. For example, $\Psi_3(x)=3x^4+6ax^2-a^2$ and $\Psi_3(X,Y)=3X^2+6XY-Y^2$. We are following the notation used by Ingram in \cite{ingram2}.
\end{remark}
\begin{proof}
	As in the previous lemma, we define $p_n(x)=\psi_n(x)$ for $n$ odd and $p_n(x)=\psi_n(x,y)/y$ for $n$ even. We start by showing that, for all $n\geq 1$, $p_n$ can be written as a homogeneous polynomial with integral coefficients in $x^2$ and $a$. These homogeneous polynomials have degree $(n^2-1)/4$ if $n$ is odd and $(n^2-4)/4$ for $n$ even. For $1\leq n\leq 4$ this follows from the definition. For example,
	\[
	p_3(x)=\psi_3(x)=3x^4+6ax^2-a^2
	\]
	and this can be written as a polynomial of degree $2$ in the variable $x^2$ and $a$. Now, we proceed by induction. By definition,
	\[
	p_{4k+1}(x)=(x^3+ax)^2p_{2k+2}(x)p_{2k}^3(x)-p_{2k-1}(x)p_{2k+1}^3(x)
	\]
	and, by induction, both addends have degree $((4k+1)^2-1)/4$ in the variable $x^2$ and $a$. For example, the degree of the first addend is
	\begin{align*}
	3+\frac{(2k+2)^2-4}{4}+3\frac{(2k)^2-4}{4}&=\frac{12+4k^2+8k+4-4+12k^2-12}{4}\\&=\frac{(4k+1)^2-1}{4}.
	\end{align*} Moreover, every term involved can be written as a homogeneous polynomial with integral coefficients in $x^2$ and $a$ observing that
	\[
	(x^3+ax)^2=x^6+2ax^4+a^2x^2
	\] 
	and using the induction. So, $p_{4k+1}$ can be written as a homogeneous polynomial in the variable $x^2$ and $a$ with degree $((4k+1)^2-1)/4$. The cases $n=4k$, $n=4k+2$ and $n=4k+3$ are analogous. Therefore, $p_n$ can be written as a homogeneous polynomial with integral coefficients in $x^2$ and $a$. We know, thanks to the work in the previous lemma, that
	\[
	\Psi_n(x)=\frac{p_n(x)}{\lcm_{l\mid n}p_{n/l}(x)}
	\]
	for $n\geq 3$ and so $\Psi_n(x)$ can be written as a homogeneous polynomial with integral coefficients in $x^2$ and $a$. Let $\Psi_n(X,Y)\in \Z[X,Y]$ be this polynomial and then
	\[
	\Psi_n(x^2,a)=\Psi_n(x).
	\]
	So, taking the homogeneization,
	\begin{align*}
	\Psi_n(u^2,av^2)&=v^{\deg(\Psi_n(x))}\Psi_n((u/v)^2,a)\\&=v^{\deg(\Psi_n(x))}\Psi_n(u/v)\\&=F_n(u,v,a).
	\end{align*}
\end{proof}
Let $P$ be a rational point on the elliptic curve $E_a$ and put $x(P)=u/v$ with $u$ and $v$ coprime. Let $B_n=B_n(E_a,P)$. Define, as in \cite[Lemma 5]{ingram2}, $X=u^2/(u^2,av^2)$ and $Y=av^2/(u^2,av^2)$. So, for $n\geq 3$,
\[
(u^2,av^2)^{\deg{\Psi_n(x)}/2}\Psi_n(X,Y)=\Psi_n(u^2,av^2)=F_n(u,v,a).
\]
Raising to the square, we have
\begin{equation}\label{Fn}
(u^2,av^2)^{\deg{\Psi_n(x)}}\Psi_n^2(X,Y)=F_n^2(u,v,a)=\frac{\psi_n^2(u,v)}{\lcm_{l\mid n}\psi_{n/l}^2(u,v)}.
\end{equation}
The last equality follows from the fact that the homogenization commutes with the $\lcm$.
\begin{lemma}\label{gn12}
		Let $n\in \N_{\geq 3}$. If $B_{n}$ does not have a primitive divisor, then $\Psi_{n}(X,Y)$ divides $ng_n^{1/2}$,
		where $g_n$ is defined in the equation (\ref{defgn}).
\end{lemma}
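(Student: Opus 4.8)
The plan is to prove, for every prime $p$, the valuation inequality
\[
2\ord_p\Psi_n(X,Y)\le 2\ord_p n+\ord_p g_n .
\]
Since $g_n$ is a perfect square — $u(u^{2}+av^{2})=\tfrac14\psi_2^{2}(u,v)=v^{3}y(P)^{2}$ is an integer which is the square of a rational number (recall $v$ is a square), hence a perfect square, so $g_2=v\psi_2^{2}(u,v)/B_2$ is a perfect square by (\ref{psi}), and Lemma \ref{gn} carries this to every $g_n$ — this is equivalent to the assertion $\Psi_n(X,Y)\mid ng_n^{1/2}$. To set up the inequality I would start from (\ref{Fn}): rewriting (\ref{psi}) as $\psi_k^{2}(u,v)=g_kB_k/v$ and pulling the common factor $v$ out of the least common multiple, (\ref{Fn}) becomes
\[
\Psi_n^{2}(X,Y)\,(u^{2},av^{2})^{\deg\Psi_n(x)}=\frac{g_nB_n}{\lcm_{m\mid n,\ m<n}(g_mB_m)} .
\]
Taking $\ord_p$ and using $\ord_p\big((u^{2},av^{2})\big)=\min(2\ord_p u,\ord_p a)\ge0$, the inequality to prove becomes
\[
\ord_p B_n\le 2\ord_p n+\big(\deg\Psi_n(x)\big)\,\ord_p\big((u^{2},av^{2})\big)+\max_{m\mid n,\ m<n}\big(\ord_p g_m+\ord_p B_m\big).
\]

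For a prime $p\nmid\Delta$ one has $\ord_p g_k=0$ for all $k$ by Lemma \ref{res} and $\ord_p((u^{2},av^{2}))=0$, so it suffices to show $\ord_p B_n\le 2\ord_p n+\max_{m\mid n,\,m<n}\ord_p B_m$. If $p\nmid B_n$ this is clear; otherwise, since $B_n$ has no primitive divisor, $p$ divides $B_j$ for some $j<n$, and because $p$ has good reduction ($p\mid B_k$ if and only if the order of the reduction of $P$ in $E(\mathbb{F}_p)$ divides $k$) we get $p\mid B_{\gcd(j,n)}$, so the rank of apparition $m_p$ of $p$ is a proper divisor of $n$. The standard description of $p$-adic valuations along an elliptic divisibility sequence then gives $\ord_p B_n=\ord_p B_{m_p}+\ord_p(n/m_p)=\ord_p B_{m_p}+\ord_p n$ when $p\ge5$ (using $p\nmid m_p$), while for $p=3$ it gives $\ord_3 B_n=\ord_3 B_{m_3}$ if $3\nmid n$ and $\ord_3 B_n\le\ord_3 B_{n/3}+2$ if $3\mid n$; in every case the right-hand side above dominates, since $m_p$ (respectively $n/3$) occurs among the proper divisors $m$. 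Note that $p=2$ never arises here, as $2\mid\Delta=-64a^{3}$.

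The remaining and main case is $p\mid\Delta$, where $(g_k)$ is nontrivial. Here I would estimate the right-hand side of (\ref{Fn}) using the identities of Lemmas \ref{phipsi} and \ref{eq} — which relate $\psi_n^{2}(u,v)$ to the terms $\psi_{n\pm q}^{2}(u,v)$ for primes $q\mid n$ and thereby let one pin down how much of $\psi_n^{2}(u,v)$ is non-primitive — together with the explicit formulas for the $g_k$ from Lemma \ref{gn} and the multiplicative relations between the $g_k$ proved above, and, at the prime $2$, the bound $\ord_2 g_4\le 5\ord_2 g_2+6$ of Lemma \ref{g2g4}. The point is that the exponent appearing in $g_n$ is quadratic in $n$, so $\ord_p g_n$ provides generous slack; the only genuinely delicate prime is $p=2$, where the $2$-adic valuations of the $g_k$ also grow quadratically in $n$ and one must check that the slack coming from $\ord_2 g_n$ and from the factor $(u^{2},av^{2})^{\deg\Psi_n(x)}$ falls short of $2\ord_2 n$ by a nonnegative amount and never exceeds it. I expect this $2$-adic verification to be the main obstacle; it is exactly the point at which the present argument refines \cite[Lemma 5]{ingram2}, whose conclusion carried a coarser power of $\Delta$ in place of $g_n^{1/2}$, and hence where the finer information of Lemmas \ref{gn} and \ref{g2g4} is indispensable.
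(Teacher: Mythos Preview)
Your proposal has a genuine gap: the case $p\mid\Delta$ is not actually proved. You write that you ``would estimate'' using Lemmas \ref{phipsi}, \ref{eq}, \ref{gn}, \ref{g2g4} and that you ``expect'' the $2$-adic verification to be the main obstacle, but you do not carry any of this out. More importantly, the direction you sketch is the wrong one: those lemmas are used \emph{later} in the paper to bound the size of $g_n$ in Proposition \ref{improve5}, not to prove Lemma \ref{gn12} itself. No quadratic-growth estimates on $\ord_p g_k$ are needed here at all.

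The paper's proof avoids the $p\mid\Delta$ versus $p\nmid\Delta$ split entirely. The key observation you are missing is that, for any prime $p\mid n$, equation (\ref{Fn}) already gives
\[
\Psi_n^2(X,Y)\ \Big|\ \frac{\psi_n^2(u,v)}{\psi_{n/p}^2(u,v)},
\]
since $\psi_{n/p}^2$ divides $\lcm_{l\mid n}\psi_{n/l}^2$. Writing $\psi_k^2(u,v)=g_kB_k/v$ and subtracting valuations, for any prime $q$ with $q\mid B_n$ (hence, by the non-primitive-divisor hypothesis and strong divisibility, $q\mid B_{n/p}$ for some prime $p\mid n$) one gets
\[
\ord_q\Psi_n^2(X,Y)\le \ord_q(g_n/g_{n/p})+\ord_q(B_n/B_{n/p})\le \ord_q g_n+2\ord_q p\le \ord_q g_n+2\ord_q n,
\]
where the middle step is exactly \cite[Lemma 3.1]{yabutavoutier}, valid at \emph{every} prime $q$, including those of bad reduction. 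The case $q\nmid B_n$ is even easier: $\ord_q\Psi_n^2(X,Y)\le\ord_q\psi_n^2(u,v)\le\ord_q(B_ng_n)=\ord_q g_n$. This handles all primes uniformly in a few lines.

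Two smaller remarks. First, your justification that $g_n$ is a perfect square via Lemma \ref{gn} only covers $n$ odd or $n\equiv 2\pmod 4$; the paper instead observes directly that $g_n=v\psi_n^2(u,v)/B_n$ with each factor a square. Second, in your good-reduction argument you write $\ord_p B_n=\ord_p B_{m_p}+\ord_p(n/m_p)$; the correct exponent is $2\ord_p(n/m_p)$, though this does not affect the conclusion.
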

\begin{proof}
	Recall that
	\[
	B_n=\frac{v\psi_n^2(u,v)}{g_n}
	\]
	and hence $\psi_n^2(u,v)$ divides $B_n g_n$. Moreover, thanks to (\ref{Fn}), $\Psi_n^2(X,Y)$ divides $\psi_n^2(u,v)$. Consider a prime $q$. If $q$ does not divide $B_n$, then
	\[
	\ord_q(\Psi_n^2(X,Y))\leq \ord_q(\psi_n^2(u,v))\leq \ord_q(B_n g_n)=\ord_q(g_n).
	\]
	If $q$ divides $B_n$, then it divides $B_{n/p}$ for some prime divisor $p$ of $n$ considering that $B_n$ does not have a primitive divisor. So, using \cite[Lemma 3.1]{yabutavoutier}, we have
	\[
	\ord_q(B_n)=\ord_q(B_{n/p})+2\ord_q(p).
	\]Observe that $\Psi_n^2(X,Y)$ divides $\psi_n^2(u,v)/\psi_{n/p}^2(u,v)$ thanks to (\ref{Fn}) since $\psi_{n/p}$ divides $ \lcm_{l\mid n}\psi_{n/l}$. Hence, we have
	\begin{align*}
	\ord_q(\Psi_n^2(X,Y))&\leq \ord_q(\psi_n^2(u,v))-\ord_q(\psi_{n/p}^2(u,v))\\&=\ord_q\Big(\frac{g_n}{g_{n/p}}\Big)+\ord_q\Big(\frac{B_n}{B_{n/p}}\Big)\\&\leq \ord_q(g_n)+2\ord_q(p)\\&\leq  \ord_q(g_n)+2\ord_q(n).
	\end{align*}
	So, for every prime $q$, we have
	\[
	\ord_q(\Psi_n^2(X,Y))\leq \ord_q(g_n)+2\ord_q(n)
	\]
	and then $\Psi_n^2(X,Y)$ divides $n^2g_n$. Observe that $g_n$ is a square since, by definition,
	\[
	g_n=\frac{v\psi_n^2(u,v)}{B_n}
	\]
	and every term involved here is a square.
\end{proof}
\begin{lemma}\label{lemma5}
	Let $n\geq 3$. If $B_{n}$ does not have a primitive divisor and a prime $q$ divides $\Psi_{n}(X,Y)$, then $q$ divides $2n$.
\end{lemma}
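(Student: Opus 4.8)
The plan is to use Lemma~\ref{gn12} together with Lemma~\ref{res} to reduce to a single case, and then to read off $\Psi_n(X,Y)\bmod q$ from the homogeneity of $\Psi_n$ and invoke Lemma~\ref{erringram}. Assume $B_n$ has no primitive divisor. By Lemma~\ref{gn12}, $\Psi_n(X,Y)\mid n g_n^{1/2}$, so either $q\mid n$ (and we are done) or $q\mid g_n$; in the latter case Lemma~\ref{res} forces $q\mid\Delta=-64a^3$, hence $q=2$ (and then $q\mid 2n$) or $q$ is an odd prime dividing $a$. It therefore suffices to show: for an odd prime $q$ with $q\mid a$ and $q\mid\Psi_n(X,Y)$, one has $q\mid 2n$.

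Write $x(P)=u/v$ with $(u,v)=1$; one checks that $\gcd(X,Y)=1$. Put $\alpha=\ord_q(a)$, $\beta=\ord_q(u)$, $\gamma=\ord_q(v)$, so $\alpha\ge 1$, $\beta\gamma=0$, and $\alpha\le 3$ because $a$ is fourth-power-free. From $\ord_q(X)=2\beta-\min(2\beta,\alpha+2\gamma)$ and $\ord_q(Y)=\alpha+2\gamma-\min(2\beta,\alpha+2\gamma)$, at least one of $\ord_q(X),\ord_q(Y)$ is zero, and exactly one of three cases occurs: (a) $q\nmid X$, $q\mid Y$; (b) $q\mid X$, $q\nmid Y$; (c) $q\nmid X$, $q\nmid Y$. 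In case (c) one has $2\beta=\alpha+2\gamma$, which with $\beta\gamma=0$ and $\alpha\le 3$ forces $\alpha=2$, $\beta=1$, $\gamma=0$, i.e.\ $q\|u$, $q^2\|a$, $q\nmid v$.

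Let $d=\deg\Psi_n(X,Y)$. In case (a), reducing mod $q$ gives $\Psi_n(X,Y)\equiv c_0X^d\pmod q$, where $c_0=\Psi_n(1,0)$ is the coefficient of $X^d$, equivalently the leading coefficient of the one-variable polynomial $\Psi_n(x)=\Psi_n(x^2,a)$; since $q\nmid X$ we obtain $q\mid c_0$. I will then show $c_0\mid n$, as follows: setting $a=0$ in the identity $p_n=\Psi_n\cdot\lcm_{l\mid n}p_{n/l}$ and using that $p_n|_{a=0}$ is a monomial with leading coefficient $\pm n$ (which follows from the weighted-homogeneous structure of the division polynomials), $\Psi_n|_{a=0}$ is a monomial in $\Z[x]$ dividing $\pm n\,x^{\deg p_n}$, so its leading coefficient $c_0$ divides $n$; hence $q\mid n$. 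In case (b), $\Psi_n(X,Y)\equiv\Psi_n(0,1)\,Y^d\pmod q$ with $q\nmid Y$, so $q\mid\Psi_n(0,1)=\Psi_n(0)|_{a=1}$ and $q\mid 2n$ by Lemma~\ref{erringram}. In case (c), I first show $q\mid X+Y$: from $y(P)^2=x(P)\big(x(P)^2+a\big)$ the integer $\ord_q\!\big(u(u^2+av^2)\big)=\ord_q\!\big(y(P)^2 v^3\big)$ is even, and since $\ord_q u=1$ this makes $\ord_q(u^2+av^2)$ odd, hence $\ge 3$; dividing $u^2+av^2$ and $(u^2,av^2)$ through by $q^2$ then gives $q\mid X+Y$, i.e.\ $Y\equiv -X\pmod q$. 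Hence $\Psi_n(X,Y)\equiv X^d\Psi_n(1,-1)\pmod q$, and $q\nmid X$ yields $q\mid\Psi_n(1,-1)=\Psi_n(1)|_{a=-1}$, so $q\mid 2n$ by Lemma~\ref{erringram}.

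The step I expect to require the most care is the divisibility $c_0\mid n$ in case (a); the clean input for it is that, on $y^2=x^3+ax$, the specialisation of $p_n$ at $a=0$ is a single monomial with leading coefficient $\pm n$. The remainder is $q$-adic bookkeeping together with the homogeneity of $\Psi_n$, and case (c) is the only place where the hypothesis that $a$ is fourth-power-free (via the squareness of $y(P)^2$) is genuinely used.
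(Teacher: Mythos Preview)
Your proof is correct and follows essentially the same approach as the paper's: reduce via Lemma~\ref{gn12} and Lemma~\ref{res} to an odd prime $q\mid a$, split into the three cases according to which of $X,Y$ is divisible by $q$, handle the case $q\mid Y$ by showing the leading coefficient $c_0$ of $\Psi_n$ divides $n$, and handle the remaining two cases via Lemma~\ref{erringram} (the paper even derives $\ord_q(X+Y)$ odd from the same parity argument on $u(u^2+av^2)$). The only cosmetic difference is that the paper first branches on whether $q\mid u$ and then compares $\ord_q(u^2)$ with $\ord_q(av^2)$, whereas you set up the trichotomy on $(\ord_q X,\ord_q Y)$ directly; your packaging is arguably cleaner, but the content is identical.
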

\begin{proof}
Take $q$ a prime divisor of $\Psi_n(X,Y)$ and then $q$ divides $ng_n^{1/2}$ for the previous lemma. Suppose that $q$ does not divide $2n$. So, it divides $a$ since the prime divisors of $g_n$ are the prime divisors of $\Delta=-64a^3$, thanks to Lemma \ref{res}. We assume that $q$ divides $a$ but does not divide $2n$ and we find a contradiction. Recall that $X=u^2/(u^2,av^2)$ and $Y=av^2/(u^2,av^2)$. If $q$ does not divide $u$, then does not divide $(u^2,av^2)$. Therefore, $q$ divides $Y$ and does not divide $X$. Hence,
	\[
	\Psi_n(X,Y)\equiv n^*X\mod{q}
	\]
	where $n^*$ is the coefficient of $X^{\deg(\Psi_n(X,Y))}$. It is easy to show that $n^*$ divides $n$. This follows from the fact that the leading coefficient of $\psi_n$ is $n$. Thus, since $(q,n)=1$, we have that $n^*X$ is coprime with $q$, that is absurd considering that $q$ divides $\Psi_n(X,Y)$. So, $q$ divides $u$ and $a$. Observe that, since $q$ divides $u$, then does not divide $v$. If $\ord_q(av^2)>\ord_q(u^2)$, then we conclude as before since $q$ divides $Y$ and does not divide $X$. If $\ord_q(av^2)<\ord_q(u^2)$, then $q$ divides $X$ and does not divide $Y$. Therefore,
	\[
	\Psi_n(X,Y)\equiv \Psi_n(0,1)Y^{\deg(\Psi_n(X,Y))}\mod{q}.
	\]
	Using the definition, \[\Psi_n(0,1)=\Psi_n(0)\rvert_{a=1}\] and hence, from Lemma \ref{erringram}, we have that $q$ does not divide $\Psi_n(0,1)$. So, $q$ does not divide $\Psi_n(X,Y)$, that is absurd. It remains the case $\ord_q(u^2)=\ord_q(a)>0$. Considering that $a$ is fourth-power-free, then $\ord_q(a)\leq 3$ and it is even since it is equal to $2\ord_q(u)$. Hence, $\ord_q(u)=1$ and $\ord_q(a)=2$. Since $P$ belongs to the curve, then
	\[
	u^3+auv^2=v^3(x(P)^3+ax(P))=v^3y(P)^2
	\] 
	and so $u^3+auv^2$ is a square. Therefore, $\ord_q(u^3+av^2u)$ is even. We know
	\[
	u^3+auv^2=u(u^2+av^2)=u(u^2,av^2)(X+Y)
	\]
	and hence
	\[
	\ord_q(u^3+avu^2)=1+2+\ord_q(X+Y).
	\]
	Since the LHS is even, we have that $\ord_q(X+Y)$ is odd. So,
	\[
	X+Y\equiv 0\mod q.
	\]
	Therefore, $X\equiv -Y\mod q$ and then 
	\[
	\Psi_n(X,Y)=X^{\deg(\Psi_n(X,Y))}\Psi_n(1,-1)\mod q.
	\]
	Using again Lemma \ref{erringram}, we conclude.
\end{proof}
Thanks to our computations of the $g_n$, we are now able to improve the exponents of \cite[Lemma 5]{ingram2}. For the convenience of the reader, we write here the result of Ingram.
\begin{lemma}\cite[Lemma 5]{ingram2}
	Let $n\geq 5$ be square-free. Consider the elliptic divisibility sequence $B_n=B_n(E_a,P)$ and suppose that $B_n$ does not have a primitive divisor. Then,
	\[
	\Psi_n(X,Y)= 2^{\alpha}\prod_{\substack{l\mid n}}l^{\beta_l}
	\]
	with $\alpha\leq 2d$ and $\beta_l\leq 3d+1$ with $d=n^2(n^2-1)/4$.
\end{lemma}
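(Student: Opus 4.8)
The plan is to read off the multiplicative shape of $\Psi_n(X,Y)$ from Lemma~\ref{lemma5}, and then to bound $\ord_q\big(\Psi_n(X,Y)\big)$ for each relevant prime $q$ by combining Lemma~\ref{gn12} with the resultant estimate of Lemma~\ref{res}. First I would record that $\Psi_n(X,Y)$ is a nonzero integer: by (\ref{Fn}) a positive power of $(u^2,av^2)$ times $\Psi_n^2(X,Y)$ equals $\psi_n^2(u,v)/\lcm_{l\mid n}\psi_{n/l}^2(u,v)$, and $\psi_n^2(u,v)=v^{n^2-1}\psi_n^2(u/v)\neq 0$ since $P$ has infinite order, so $x(P)$ is not the abscissa of a torsion point and hence not a root of $\psi_n^2$. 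Because $B_n$ has no primitive divisor, Lemma~\ref{lemma5} tells us that every prime dividing $\Psi_n(X,Y)$ divides $2n$; as $n$ is square-free, these primes are $2$ and the prime divisors $l$ of $n$, so indeed $\Psi_n(X,Y)=\pm 2^{\alpha}\prod_{l\mid n}l^{\beta_l}$ for nonnegative integers $\alpha,\beta_l$.

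For the exponents, Lemma~\ref{gn12} gives $\Psi_n(X,Y)\mid n\,g_n^{1/2}$, so for every prime $q$
\[
\ord_q\big(\Psi_n(X,Y)\big)\le \ord_q(n)+\tfrac12\ord_q(g_n),
\]
and Lemma~\ref{res}, together with $\Delta=-64a^3$, yields $\ord_q(g_n)\le \tfrac{n^2(n^2-1)}{6}\,\ord_q(\Delta)$. For an odd prime $l\mid n$ one has $\ord_l(n)=1$ and $\ord_l(\Delta)=3\ord_l(a)\le 9$, the last step because $a$ is fourth-power-free; hence
\[
\beta_l\le 1+\tfrac12\cdot\tfrac{n^2(n^2-1)}{6}\cdot 9=1+\tfrac{3n^2(n^2-1)}{4}=3d+1,
\]
which is the asserted bound (and it holds trivially when $l\nmid a$, as then $l\nmid g_n$).

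The delicate case is $q=2$, which I expect to be the main obstacle. Here $\ord_2(\Delta)=6+3\ord_2(a)$ can be as large as $15$, so the crude substitution above only gives $\alpha\le 1+\tfrac{15}{12}n^2(n^2-1)$, larger than $2d$. To obtain $\alpha\le 2d$ one must not bound $g_n$ by $\Delta^{n^2(n^2-1)/6}$ at the prime $2$, but rather compute the exact power of $2$ in $\Res(\phi_n,\psi_n^2)$, equivalently in $g_n$, and feed it into $\ord_2\big(\Psi_n(X,Y)\big)\le \ord_2(n)+\tfrac12\ord_2(g_n)$; this $2$-adic bookkeeping is what Ingram performs in \cite[Section~2]{ingram2}. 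It is exactly the point where the present paper will improve matters, using the closed forms for $g_n$ from Lemma~\ref{gn} together with $\ord_2(g_4)\le 5\ord_2(g_2)+6$ from Lemma~\ref{g2g4} to bound $\ord_2(g_n)$ more tightly. Once the correct $2$-adic estimate is available, inserting it into the displayed inequality gives $\alpha\le 2d$, completing the proof.
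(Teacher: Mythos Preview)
The paper does not actually prove this lemma: it is quoted verbatim from \cite[Lemma~5]{ingram2} ``for the convenience of the reader'' and then immediately superseded by Proposition~\ref{improve5}. So there is no in-paper proof to compare against; what matters is whether your argument stands on its own.

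Your treatment of the odd primes $l\mid n$ is fine: Lemma~\ref{gn12} plus Lemma~\ref{res} and $\ord_l(a)\le 3$ give $\beta_l\le 3d+1$ exactly as you wrote. The shape $\Psi_n(X,Y)=\pm 2^\alpha\prod_{l\mid n}l^{\beta_l}$ also follows cleanly from Lemma~\ref{lemma5}.

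The genuine gap is the $2$-adic bound. You compute correctly that the crude resultant estimate only yields $\alpha\le \ord_2(n)+5d$, and then you say the remedy is to ``compute the exact power of $2$ in $\Res(\phi_n,\psi_n^2)$'' as Ingram does---but you do not carry this out. The final sentence, ``Once the correct $2$-adic estimate is available, inserting it \ldots gives $\alpha\le 2d$,'' is not a proof step; it is a promissory note. Moreover, the suggestion to use Lemmas~\ref{gn} and~\ref{g2g4} here does not obviously lead to $2d$ either: those lemmas feed into Proposition~\ref{improve5}, whose $2$-adic exponent $\tfrac{75}{4}n^2-59$ is better than $2d$ asymptotically but is a different bound with a different proof architecture. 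If you want Ingram's specific constant $2d$, you need his specific $2$-adic analysis; if you are content with the paper's improved bound, you should prove Proposition~\ref{improve5} instead and not claim to have recovered Ingram's statement.
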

\begin{proposition}\label{improve5}
Let $n\geq 3$. Let us consider the elliptic divisibility sequence $B_n=B_n(E_a,P)$ and suppose that $B_n$ does not have a primitive divisor for $n$ square-free. Then,
\[
\Psi_n(X,Y)= 2^{\alpha_2}\prod_{\substack{l\mid n\\l\neq 2}}l^{\alpha_l}
\]
with $\alpha_2\leq \frac{75}4 n^2-59$ and $\alpha_l\leq \frac{45}4 n^2-35$.
\end{proposition}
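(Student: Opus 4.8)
The plan is to read the shape of the factorization off Lemma~\ref{lemma5} and then to bound the exponents by feeding the divisibility $\Psi_n(X,Y)\mid n g_n^{1/2}$ of Lemma~\ref{gn12} into the explicit description of $g_n$ from Lemma~\ref{gn}, together with the resultant bound of Lemma~\ref{res}. First, since $B_n$ has no primitive divisor, Lemma~\ref{lemma5} says every prime dividing $\Psi_n(X,Y)$ divides $2n$; as $n$ is square-free this means such a prime is $2$ or an odd prime divisor of $n$, so $\Psi_n(X,Y)=2^{\alpha_2}\prod_{l\mid n,\,l\neq 2}l^{\alpha_l}$, and the task reduces to bounding $\alpha_2$ and the $\alpha_l$.

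For each prime $l$, Lemma~\ref{gn12} gives $\alpha_l=\ord_l(\Psi_n(X,Y))\le \ord_l(n)+\tfrac{1}{2}\ord_l(g_n)$, and $\ord_l(n)\le1$ since $n$ is square-free. The key observation is that a square-free $n\ge3$ is either odd or $\equiv2\bmod4$, so Lemma~\ref{gn} applies directly: if $n$ is odd then $\ord_l(g_n)=\tfrac{n^2-1}{4}\ord_l(2^{\delta}g_2)$, and if $n\equiv2\bmod4$ then $\ord_l(g_n)=\ord_l(g_2)+\tfrac{n^2-4}{16}\ord_l(g_4)$. This collapses everything onto the three quantities $\ord_l(g_2)$, $\ord_l(g_4)$ and, when $l=2$, $\delta$, which is exactly where the improvement over \cite[Lemma~5]{ingram2} comes from: the exponent of $l$ in $g_n$ drops from size $\asymp n^4$ to a linear function of $n^2$.

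The last input is Lemma~\ref{res}: $g_k\mid\Delta^{k^2(k^2-1)/6}$ with $\Delta=-64a^3$, so $\ord_l(g_2)\le2\ord_l(\Delta)$ and $\ord_l(g_4)\le40\ord_l(\Delta)$. Since $a$ is fourth-power-free, $\ord_p(a)\le3$ for all $p$; hence for odd $l$ one has $\ord_l(\Delta)=3\ord_l(a)\le9$, giving $\ord_l(g_2)\le18$ and $\ord_l(g_4)\le360$, while for $l=2$ one has $\ord_2(\Delta)=6+3\ord_2(a)\le15$, giving $\ord_2(g_2)\le30$, $\ord_2(g_4)\le600$, and $\delta\le1$. (Lemma~\ref{g2g4} would sharpen the $2$-adic bound on $g_4$, but this turns out not to be needed.) Plugging in, for an odd $l\mid n$ the case $n\equiv2\bmod4$ is the binding one and yields $\alpha_l\le1+\tfrac{1}{2}\bigl(18+\tfrac{n^2-4}{16}\cdot360\bigr)=\tfrac{45}{4}n^2-35$, while for $l=2$ the same case yields $\alpha_2\le1+\tfrac{1}{2}\bigl(30+\tfrac{n^2-4}{16}\cdot600\bigr)=\tfrac{75}{4}n^2-59$; in the odd-$n$ case the corresponding estimates $1+\tfrac{n^2-1}{8}\cdot18$ and $\tfrac{31}{8}(n^2-1)$ are strictly smaller once $n\ge3$.

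I do not expect a genuine obstacle here: the whole argument is an assembly of Lemmas~\ref{lemma5}, \ref{gn12}, \ref{gn} and \ref{res} with a careful split into the cases $n$ odd / $n\equiv2\bmod4$ and $l=2$ / $l$ odd, and the only thing that really has to be checked is that the two resulting inequalities hold for \emph{every} square-free $n\ge3$ rather than only asymptotically, which is a short elementary verification. The mild subtlety to keep in mind is that, even when $n$ is odd, $\Psi_n(X,Y)$ may still carry a power of $2$, so the bound on $\alpha_2$ must be proved in both parity cases and not only for $n$ even.
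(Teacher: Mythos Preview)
Your proposal is correct and follows essentially the same route as the paper: both arguments combine Lemma~\ref{lemma5} for the shape of the factorization with Lemma~\ref{gn12}, Lemma~\ref{gn}, and Lemma~\ref{res} to bound the exponents, splitting according to the parity of $n$ and whether the prime is $2$ or odd. The only cosmetic difference is that the paper packages the two parity cases into a single divisibility $\Psi_n(X,Y)\mid n(4a)^{(15n^2-48)/4}$ before reading off the exponents, whereas you treat the four cases separately and observe that $n\equiv 2\bmod 4$ is the binding one; the arithmetic and the resulting bounds are identical.
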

\begin{proof}
 If $n$ is odd, then
	\[
	\Psi_n(X,Y)\mid ng_n^{1/2}=n(2^\delta g_2)^{\frac{n^2-1}{8}}\mid n(2 \Delta^2)^{\frac{n^2-1}{8}}
	\]
	using Lemma \ref{res}, \ref{gn} and \ref{gn12}. If $n$ is even, in the same way,
	\[
	\Psi_n(X,Y)\mid ng_2^{1/2}g_4^{\frac{n^2-4}{32}}\mid n \Delta(\Delta^{40})^{\frac{n^2-4}{32}}=n\Delta^{\frac{5n^2-16}{4}}.
	\]
	Here we are using that $n$ is square-free and so $n\equiv 2 \mod 4$ if $n$ is even. Since $\Delta=-64a^3$, then
	\[
	\Psi_n(X,Y)|n(4a)^{\frac{15n^2-48}{4}}.
	\]
	Indeed, for $n$ even this is simply the definition and for $n$ odd we have
	\[
	\Psi_n(X,Y)\mid n(2 \Delta^2)^{\frac{n^2-1}{8}}=n(2^{13} a^6)^{\frac{n^2-1}{8}}\mid n(4a)^{\frac{15n^2-48}{4}}.
	\]
	Take $p\neq 2$ a prime. If $p$ does not divide $n$, then $\ord_p(\Psi_n(X,Y))=0$ thanks to the previous lemma. If $p$ divides $n$, then
	\begin{align*}
	\ord_p(\Psi_n(X,Y))&\leq \ord_p(n)+\frac{15n^2-48}{4}\ord_p(a)\\&\leq 1+3\frac{15n^2-48}{4}\\&=\frac{45}4 n^2-35
	\end{align*}	since $a$ is fourth-power-free and $n$ is square-free.
	Moreover, in the same way,
	\[
	\ord_2(\Psi_n(X,Y))\leq 1+5\frac{15n^2-48}{4}=\frac{75}4 n^2-59.
	\]
\end{proof}

\begin{remark}
	The previous Proposition is an improvement of \cite[Lemma 5]{ingram2}, since the exponents grow as $n^2$ and so, for $n$ large enough, they are smaller than the exponents of Lemma 5, that grow as $n^4$.
\end{remark}
Now we are ready to conclude the proof of Theorem \ref{Teo}. We need to show that $B_{30}(E_a,P)$ always has a primitive divisor. We will use our bound on the sequence of the $g_n$ and the work on the divisors of $\Psi_n(X,Y)$.
\begin{lemma}\label{x}
	Let $(B_n(E_a,P))_{n\in \N}$ be an elliptic divisibility sequence and suppose that $B_{30}$ does not have a primitive divisor. Then, 
	\[
	z:=\frac{B_{30}B_{5}B_{3}B_{2}}{B_1B_{15}B_{10}B_{6}}
	\]
	is an integer that divides $30^2$.
\end{lemma}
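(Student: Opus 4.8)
The plan is to identify $z$ with the ``primitive part'' of $B_{30}$ obtained by Möbius inversion and then to compute its $q$-adic valuation one prime at a time. Writing $B_n=\prod_{d\mid n}P_d$ with $P_d$ the primitive part, Möbius inversion gives $P_n=\prod_{d\mid n}B_d^{\mu(n/d)}$; since $\mu(30/d)=+1$ for $d\in\{2,3,5,30\}$ and $\mu(30/d)=-1$ for $d\in\{1,6,10,15\}$, this says exactly
\[
z=\prod_{d\mid 30}B_d^{\mu(30/d)}=P_{30}.
\]
As each $B_d$ is a positive integer, $z$ is a positive rational, so it suffices to prove that for every prime $q$ one has $\ord_q(z)\ge 0$, that $\ord_q(z)=0$ when $q\notin\{2,3,5\}$, and that $\ord_q(z)\le 2$ when $q\in\{2,3,5\}$. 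I would therefore fix a prime $q$ and study
\[
\ord_q(z)=\sum_{d\mid 30}\mu(30/d)\,\ord_q(B_d)
\]
by means of the rank of apparition $m_q$, i.e.\ the least index with $q\mid B_{m_q}$.

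Recall two standard facts about elliptic divisibility sequences: the set $\{n:q\mid B_n\}$ is empty or equal to the set of multiples of $m_q$, and \cite[Lemma~3.1]{yabutavoutier} (as used in the proof of Lemma~\ref{gn12}) gives $\ord_q(B_{mp})=\ord_q(B_m)+2\ord_q(p)$ whenever $q\mid B_m$. If $q\nmid B_{30}$ then $q\nmid B_d$ for every $d\mid 30$ (otherwise $m_q\mid d\mid 30$ would force $q\mid B_{30}$), so $\ord_q(z)=0$. If $q\mid B_{30}$ then $m_q\mid 30$; moreover $m_q\ne 30$, because $m_q=30$ would mean $q\mid B_{30}$ while $q\nmid B_1\cdots B_{29}$, i.e.\ $q$ would be a primitive divisor of $B_{30}$, contrary to hypothesis. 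Hence $m_q$ is a proper divisor of $30$. Since $30$ is square-free, one may peel the prime factors of any divisor $d$ of $30$ off one at a time — first those different from $q$, which by the cited lemma leave $\ord_q$ unchanged, then $q$ itself if $q\mid d$, which adds $2$ — and thereby obtain
\[
\ord_q(B_d)=\bigl(\ord_q(B_{m_q})+2\ord_q(d)-2\ord_q(m_q)\bigr)\cdot[\,m_q\mid d\,]
\]
for all $d\mid 30$.

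Substituting this into the formula for $\ord_q(z)$ and using the Möbius identity $\sum_{m_q\mid d\mid 30}\mu(30/d)=[\,m_q=30\,]=0$, the part not involving $\ord_q(d)$ cancels, and since $\ord_q(d)=[\,q\mid d\,]$ for square-free $d$ we are left with
\[
\ord_q(z)=2\!\!\sum_{m_q\mid d\mid 30}\!\!\mu(30/d)\,[\,q\mid d\,]=2\!\!\sum_{\lcm(m_q,q)\mid d\mid 30}\!\!\mu(30/d)=2\,[\,\lcm(m_q,q)=30\,].
\]
This is $0$ unless $q\mid 30$ and $\lcm(m_q,q)=30$, in which case — using $m_q\ne 30$ — we are forced into $q\in\{2,3,5\}$ and $m_q=30/q$, whence $\ord_q(z)=2$. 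Therefore $\ord_q(z)\in\{0,2\}$, it vanishes for $q\notin\{2,3,5\}$ and is at most $2$ for $q\in\{2,3,5\}$, and $z$ is a positive integer dividing $2^2\cdot3^2\cdot5^2=30^2$.

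The step I expect to require the most care is the derivation of the clean valuation formula for $\ord_q(B_d)$: one must check that \cite[Lemma~3.1]{yabutavoutier} really does apply at each stage of the peeling (which is where square-freeness of $30$ is used, to keep $m_q$ a divisor of every intermediate index), and one must invoke the ``no primitive divisor'' hypothesis in exactly the right place to exclude $m_q=30$. Everything past that point is routine bookkeeping with the Möbius function.
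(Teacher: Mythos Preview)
Your proof is correct and rests on the same two ingredients as the paper's: the rank-of-apparition fact that $\{n:q\mid B_n\}$ is the set of multiples of some $m_q$, and \cite[Lemma~3.1]{yabutavoutier} giving $\ord_q(B_{mk})=\ord_q(B_k)+2\ord_q(m)$ once $q\mid B_k$. The paper proceeds by an explicit case split on $m_q\in\{1,2,3,5,6,10,15\}$, computing $\ord_q(z)$ directly in each case; you instead recognise $z=\prod_{d\mid 30}B_d^{\mu(30/d)}$ and let the M\"obius identity $\sum_{m\mid d\mid 30}\mu(30/d)=[m=30]$ do the bookkeeping, arriving at the clean formula $\ord_q(z)=2[\lcm(m_q,q)=30]$. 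Your packaging is tidier and makes the generalisation to any square-free $n$ transparent, while the paper's version is more elementary and self-contained. One small remark: your ``peeling'' description is unnecessary, since \cite[Lemma~3.1]{yabutavoutier} applies in one step with $k=m_q$ and $m=d/m_q$ to give $\ord_q(B_d)=\ord_q(B_{m_q})+2\ord_q(d/m_q)$ directly; square-freeness of $30$ enters only through $\ord_q(d)\in\{0,1\}$.
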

\begin{proof}
	Thanks to \cite[Lemma 3.1]{yabutavoutier}, if $p$ divides $B_k$, then
	\begin{equation}\label{ord}
	\ord_p(B_{mk})=\ord_p(B_k)+2\ord_p(m).
	\end{equation}
	Take $p$ a prime that does not divide $B_{30}$. So, $\ord_p(z)=0$ since $p$ does not divide any of the terms involved. If $p$ divides $B_{30}$, then it must divide one of the other factors. Suppose $p$ divides $B_1$. Hence, thanks to (\ref{ord}),
	\begin{align*}
	\ord_p(z)&=\ord_p(B_{30})+\ord_p(B_3)+\ord_p(B_2)+\ord_p(B_5)\\&-\ord_p(B_{15})-\ord_p(B_{6})-\ord_p(B_{10})-\ord_p(B_1)\\&=4\ord_p(B_1)+2\ord_p(30)+2\ord_p(3)+2\ord_p(2)+2\ord_p(5)\\&-4\ord_p(B_1)-2\ord_p(15)-2\ord_p(6)-2\ord_p(10)-2\ord_p(1)\\&=0.
	\end{align*}
	If $p$ divides $B_2$ and does not divide $B_1$, then
	\begin{align*}
	\ord_p(z)&=\ord_p(B_{30})+\ord_p(B_3)+\ord_p(B_2)+\ord_p(B_5)\\&-\ord_p(B_{15})-\ord_p(B_{6})-\ord_p(B_{10})-\ord_p(B_1)\\&=2\ord_p(B_2)+2\ord_p(15)-2\ord_p(B_2)-2\ord_p(5)-2\ord_p(3)\\&=0.
	\end{align*}
	The cases when $p$ divides $B_3$ and $B_5$ are analogous. If $p$ divides $B_6$ but does not divide $B_3$ and $B_2$, then
	\begin{align*}
	\ord_p(z)&=\ord_p(B_{30})+\ord_p(B_3)+\ord_p(B_2)+\ord_p(B_5)\\&-\ord_p(B_{15})-\ord_p(B_{6})-\ord_p(B_{10})-\ord_p(B_1)\\&=\ord_p(B_6)+2\ord_p(5)-\ord_p(B_6)\\&=2\ord_p(5).
	\end{align*}
	The cases with $B_{10}$ and $B_{15}$ are analogous. Finally, for every prime $p$,
	\[
	0\leq \ord_p(z)\leq 2\ord_p(2)+2\ord_p(3)+2\ord_p(5)= 2\ord_p(30).
	\]
\end{proof}
\begin{proposition}
	The term $B_{30}$ has always a primitive divisor.
\end{proposition}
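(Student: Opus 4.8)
The plan is to argue by contradiction: assume $B_{30}$ has no primitive divisor, and obtain a contradiction by reducing to finitely many Thue equations. The first point to note is that the elementary argument used for the other values of $n$ is not available here: since $2\mid 30$ one has, by Lemma \ref{rho}, $\rho(30)=\frac14+\frac19+\frac1{25}>\frac9{25}$, so the coefficient of $\hat h(P)$ on the left of (\ref{dis1}) is negative and that inequality is vacuous; and although the sharper inequality (\ref{pari}) does apply with $k=15$, it only yields a bound on $|a|$ that is far too large to check directly. Hence the finer information about $\Psi_{30}$ and about the sequence $(g_n)$ developed above is genuinely needed.

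The key step is to pin down $\Psi_{30}(X,Y)$, where $x(P)=u/v$, $X=u^2/(u^2,av^2)$ and $Y=av^2/(u^2,av^2)$. By Proposition \ref{improve5} (with $n=30$, which is square-free) the hypothesis forces
\[
\Psi_{30}(X,Y)=2^{\alpha_2}3^{\alpha_3}5^{\alpha_5},\qquad \alpha_2\le \tfrac{75}{4}\cdot 30^2-59,\quad \alpha_3,\alpha_5\le \tfrac{45}{4}\cdot 30^2-35;
\]
so $\Psi_{30}(X,Y)$ takes one of finitely many positive integer values $d$, and — crucially — the set of admissible $d$ depends only on the integer $30$ and not on $a$. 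If these exponent bounds leave too many candidates I would sharpen them: Lemma \ref{gn} gives $g_{30}=g_2g_4^{56}$, and combining this with Lemmas \ref{res} and \ref{g2g4}, the explicit formulas for $\psi_2^2,\phi_2,\psi_4^2,\phi_4$, the relation (\ref{Fn}), and Lemma \ref{x} (which forces $B_{30}B_5B_3B_2/(B_1B_{15}B_{10}B_6)$ to divide $30^2$), one can bound the $2$-, $3$- and $5$-adic valuations of $\Psi_{30}(X,Y)$ by small explicit numbers, uniformly in $a$ because $a$ is fourth-power-free.

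It then suffices to solve the resulting Thue equations. The form $\Psi_{30}(X,Y)\in\Z[X,Y]$ is homogeneous of degree $144$ (half the degree of $\Psi_{30}(x)$, whose roots are the $x$-coordinates of the points of exact order $30$ of $E$ over $\overline{\Q}$), the pair $(X,Y)$ is coprime with $X>0$ by construction, and we have just seen that $\Psi_{30}(X,Y)$ must equal one of the finitely many $d$. So one shows that, for each such $d$, the equation $\Psi_{30}(X,Y)=d$ has no integer solution arising from a non-torsion point on some $E_a$ with $a$ fourth-power-free. Since $E_a$ has complex multiplication by $\Z[i]$, the polynomial $\Psi_{30}(x)$ splits over $\Q$ into factors of smaller degree, so $\Psi_{30}(X,Y)$ factors into binary forms of modest degree and each of the resulting Thue equations can be solved with the command "thue" of PARI/GP. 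I expect this to return no admissible solution — or at worst a short explicit list of pairs $(a,P)$, which I would then dispose of exactly as in the proof of Proposition \ref{35}: compute $\psi_{30}(u,v)/\lcm_{l\mid 30}\psi_{30/l}(u,v)$ and exhibit a prime dividing it but not $\Delta$, i.e.\ a primitive divisor of $B_{30}$. This contradiction proves the Proposition, and together with the earlier propositions, the cases $n\le 25$ treated in \cite[Lemma 5.1]{yabutavoutier}, and Lemma \ref{rad}, it completes the proof of Theorem \ref{Teo}.

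The step I expect to be the main obstacle is the last one: a priori the list of right-hand sides $d$ is very large and the form $\Psi_{30}$ has high degree, so making the Thue computation feasible requires both a genuine sharpening of the exponent bounds of Proposition \ref{improve5} and real use of the CM factorization of $\Psi_{30}$; one must also be careful to enumerate, and then settle individually, the sporadic solutions.
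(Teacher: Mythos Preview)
Your plan is essentially the paper's own argument: bound $\Psi_{30}(X,Y)$ by a product of powers of $2,3,5$ via Proposition \ref{improve5} and Lemma \ref{lemma5}, sharpen the exponents using Lemmas \ref{gn}, \ref{g2g4}, \ref{x} and the explicit $g_n$-computations, factor $\Psi_{30}$ into lower-degree pieces, and solve the resulting Thue equations in PARI.

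Two concrete points where the paper is sharper than your sketch, and which make the computation feasible. First, the paper kills the primes $3$ and $5$ entirely (so $\alpha_3=\alpha_5=0$) not via the $g_n$-machinery but by the elementary check that $\Psi_{30}(X,Y)\equiv 0\pmod 3$ and $\pmod 5$ have no solutions with $(X,Y)$ coprime; only the $2$-adic exponent survives, bounded by $434$, and this is what reduces the Thue search to the single family $p_i(X,Y)=\pm 2^k$, $0\le k\le 217$. Second, the endgame is not ``a short explicit list of pairs $(a,P)$'' to be checked as in Proposition \ref{35}: the Thue equations return only the \emph{trivial} solutions $X=0$, $Y=0$, $X=\pm Y$, and these are eliminated uniformly by showing they force $P$ to be torsion or force $a$ into a curve of rank $0$ (the case $X=Y$ leads to $a=4$). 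So your anticipated ``sporadic'' case-by-case verification is not needed.
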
	
\begin{proof}
Take 
\[
z=\frac{B_{30}B_3B_5B_2}{B_1B_6B_{10}B_{15}}.
\]
Then, using (\ref{psi}),
\begin{align*}
z=&\frac{B_{30}B_3B_5B_2}{B_1B_6B_{10}B_{15}}\\=&\frac{v\psi_{30}^2(u,v)}{g_{30}}\frac{v\psi_{3}^2(u,v)}{g_{3}}\frac{v\psi_{5}^2(u,v)}{g_{5}}\frac{v\psi_{2}^2(u,v)}{g_{2}}\cdot\\\cdot&\frac{g_{15}}{v\psi_{15}^2(u,v)}\frac{g_{10}}{v\psi_{10}^2(u,v)}\frac{g_{6}}{v\psi_{6}^2(u,v)}\frac{g_{1}}{v\psi_{1}^2(u,v)}\\=&\frac{v\psi_{30}^2(u,v)v\psi_5^2(u,v)v\psi_3^2(u,v)v\psi_2^2(u,v)}{v\psi_{15}^2(u,v)v\psi_{10}^2(u,v)v\psi_6^2(u,v)v\psi_1^2(u,v)}\cdot\Big( \frac{g_{15}g_{10}g_{6}g_{1}}{g_{30}g_{5}g_{3}g_{2}}\Big).
\end{align*}
Observe that
\[
\Psi_{30}^2(u^2,av^2)=\frac{\psi_{30}^2(u,v)\psi_5^2(u,v)\psi_3^2(u,v)\psi_2^2(u,v)}{\psi_{15}^2(u,v)\psi_{10}^2(u,v)\psi_6^2(u,v)\psi_1^2(u,v)}
\]
thanks to (\ref{Fn}) and then
\[
\abs{\Psi_{30}(u^2,av^2)}\cdot\sqrt{\Big( \frac{g_{15}g_{10}g_{6}g_{1}}{g_{30}g_{5}g_{3}g_{2}}\Big)}=\sqrt{z}.
\]
If $B_{30}$ does not have a primitive divisor, then $z|(30)^2$ by Lemma \ref{x}. Thus, thanks to Lemma \ref{gn},
\begin{align*}
\Psi_{30}(u^2,av^2)|&30\sqrt{\Big( \frac{g_{30}g_{5}g_{3}g_{2}}{g_{15}g_{10}g_{6}g_{1}}\Big)}\\=&30\frac{g_2^{\frac 12}( g_4)^{\frac{30^2-4}{32}}g_2^{\frac 12}( g_4)^{\frac{2^2-4}{32}}(2^\delta g_2)^{\frac{3^2-1}{8}}(2^\delta g_2)^{\frac{5^2-1}{8}}}{(2^\delta g_2)^{\frac{15^2-1}{8}}g_2^{\frac 12}( g_4)^{\frac{10^2-4}{32}}g_2^{\frac 12}( g_4)^{\frac{6^2-4}{32}}}.
\end{align*}
So, \begin{equation}
\label{g4}\Psi_{30}(u^2,av^2)\mid 30g_4^{24}(2^\delta g_2)^{-24}.
\end{equation}
By direct computation, $\Psi_{30}$ is a homogeneous polynomial of degree $144$. Recall that $X=u^2/(u^2,av^2)$ and $Y=av^2/(u^2,av^2)$. Hence, $X$ and $Y$ are coprime.
Using Lemma \ref{lemma5}, we obtain \[\Psi_{30}(X,Y)|2^{a_1}3^{a_2}5^{a_3}.\]
We know, by (\ref{g4}), that
\[
a_1\leq 2+24(\ord_2(g_4)-\ord_2(g_2))\leq 146+96\ord_2(g_2)
\]
where the last inequality follows from Lemma \ref{g2g4}.
If $2|v$, then $\phi_2(u,v)\equiv u^4\not\equiv0\mod 2$ and hence $\ord_2(g_2)\leq\ord_2 (\phi_2(u,v))=0$. If $2\nmid v$, then
\[
\ord_2(g_2)\leq \ord_2(\gcd(\phi_2(x),\psi_2^2(x)))\leq 3
\]
where the last inequality follows from the fact that the equation \[\phi_2(x)\equiv \psi_2^2(x)\equiv 0\mod 16\] has solution only for $a\equiv 0\mod 16$, that is absurd since $a$ is fourth-power-free. This can be checked evaluating the equation for all the pairs $(x,a)$ modulo $16$. Therefore, 
\[
a_1\leq 146+96\ord_2(g_2)\leq 146+96\cdot 3=434.
\]
The equation $\Psi_{30}(X,Y)\equiv 0\mod{3}$ has no solution for $X$ and $Y$ coprime and so $a_2=0$. This follows from the fact that $\Psi_{30}(x,y)\equiv 0\mod 3$ has only the solution $(x,y)\equiv (0,0)\mod 3$ and this can be checked by direct computation. In the same way $a_3=0$. In conclusion, if  $B_{30}(E_a,P)$ does not have a primitive divisor and $x(P)=u/v$, then $\Psi_{30}(X,Y)|2^{434}$ with $X=u^2/(u^2,av^2)$ and $Y=av^2/(u^2,av^2)$.
Hence, we have to solve finitely many Thue equations of the form
\[
\Psi_{30}(X,Y)=d
\]
with $d|2^{434}$.
Using PARI/GP, we want to show that this equation has solutions only for $X=\pm Y$, $X=0$ or $Y=0$ (we will call them the trivial solutions). 
The polynomial can be factored in four irreducible terms, that we will call $p_1,p_2,p_3,p_4$. The polynomial $p_1$ has degree $16$, the polynomial $p_2$ has degree $32$ and the coefficient of $X^{31}Y$ is $-4256$, the polynomial $p_3$ has degree $32$ and the coefficient of $X^{31}Y$ is $-416$ and the polynomial $p_4$ has degree $64$. This factorization is obtained using PARI/GP. If $\Psi_{30}(X,Y)|2^{434}$, then $p_1(X,Y)=\pm 2^k$ for $0\leq k\leq 217$ or $p_2(X,Y)=\pm 2^k$ for $0\leq k\leq 217$. Using PARI/GP one can check that these two equations have only trivial solutions. This calculation took 3 minutes using PARI/GP 2.11.1 on a Windows 10 desktop with an Intel i7-7500 processor and 8gb of RAM. If $X=0$, then $u=0$ and hence $P$ is a 2-torsion point, that is absurd since we assumed that $P$ is non-torsion. If $Y=0$, then $a=0$ or $v=0$ and neither of which we consider here. If $X=-Y$, then $u^2=- av^2$. Therefore $x(P)^2=-a$ and so again $P$ is a $2$-torsion point. If $X=Y$, then $x(P)^2=a$ and hence $y(P)^2=x(P)^3+ax(P)=2a^{3/2}$. Thus, $2a^{1/2}=(y(P)/x(P))^2$ is a square. Suppose $p\neq 2$ divides $a$. Therefore $\ord_p(a)=2\ord_p(2a^{1/2})\geq 2\cdot 2$ and this is absurd since $a$ is fourth-power-free. Hence, it remains only the case when $a$ is a power of $2$. Since $2a^{1/2}$ is a square and $a$ is fourth-power-free, then $a=4$. This curve has rank $0$. So, there is always a primitive divisor.
\end{proof}
\section{Application to a more general case}
We want to generalize the techniques of the previous section to a more general case. The proof of Theorem \ref{thm2} will follow from the ideas of \cite[Lemma 5.1]{yabutavoutier}. As the authors of \cite{yabutavoutier} pointed out in Remark 5.2, $\psi_k$ is reducible for $k=13$ and $17$. We will show that indeed it is reducible for every prime congruent to $1$ modulo $4$ and therefore we will apply their ideas to prove Theorem \ref{thm2}.

Recall that $E_a$ is the elliptic curve defined by the equation $y^2=x^3+ax$ where $a$ is a fourth-power-free integer.
\begin{lemma}\label{rootsep}
	Fix $k\geq 2$.
	Let $T_1$ and $T_2$ be two non-trivial $k$-torsion points of $E_a(\overline{\Q})$ such that $T_1\neq \pm T_2$. There exists $K_k$, depending only on $k$, such that
	\[
	\abs{x(T_1)-x(T_2)}\geq K_k\abs{a}^{-1/2}
	\]
	and
	\[
	\abs{x(T_1)^{-1}-x(T_2)^{-1}}\geq K_k\abs{a}^{-1/2}.
	\]
	In the second inequality we are assuming that $x(T_1)x(T_2)\neq 0$.
\end{lemma}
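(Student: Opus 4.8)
The plan is to reduce everything to the single curve $E_1\colon y^2=x^3+x$ by means of the quartic twist isomorphism. Fix in $\overline{\Q}$ a square root $\sqrt a$ of $a$ and an element $a^{3/4}$ with $(a^{3/4})^2=(\sqrt a)^3$; then
\[
\iota\colon E_1\longrightarrow E_a,\qquad \iota(X,Y)=\bigl(\sqrt a\,X,\ a^{3/4}Y\bigr),
\]
is an isomorphism of elliptic curves over $\overline{\Q}$ (direct substitution turns $Y^2=X^3+X$ into $y^2=x^3+ax$, and $\iota$ sends the point at infinity to the point at infinity, hence is a group isomorphism). In particular $\iota$ restricts to a bijection between the non-trivial $k$-torsion points of $E_1(\overline{\Q})$ and those of $E_a(\overline{\Q})$, it satisfies $\iota(-T)=-\iota(T)$, and $x(\iota(T))=\sqrt a\, x(T)$. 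Let $S_k\subset\overline{\Q}$ be the finite set of $x$-coordinates of the non-trivial $k$-torsion points of $E_1$; it depends only on $k$, and by the above the $x$-coordinates of the non-trivial $k$-torsion points of $E_a$ are exactly $\{\sqrt a\,\xi:\xi\in S_k\}$. (One could instead avoid $\iota$ and argue with division polynomials: the homogeneity of the $p_n$ in $x^2$ and $a$ established earlier gives the identity $\psi_n^2(\mu x,\mu^2 a)=\mu^{n^2-1}\psi_n^2(x,a)$ for all $\mu$, and specializing $\mu=\sqrt a$ together with Lemma \ref{divpol}(d) yields the same description of the $k$-torsion $x$-coordinates.)

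Next I would set
\[
K_k:=\min\Bigl(\ \min_{\xi,\xi'\in S_k,\ \xi\neq\xi'}\abs{\xi-\xi'}\ ,\ \ \min_{\xi,\xi'\in S_k\setminus\{0\},\ \xi\neq\xi'}\abs{\xi^{-1}-{\xi'}^{-1}}\ \Bigr),
\]
which is a strictly positive real number depending only on $k$. Indeed, for every $k\geq 2$ the set $S_k$ contains at least two distinct nonzero elements: if $k$ is odd then $0\notin S_k$ (the point $(0,0)$ has order $2$) and $\abs{S_k}=(k^2-1)/2\geq 4$, while if $k$ is even then $0\in S_k$ but $\abs{S_k}=(k^2+2)/2\geq 3$; so both inner minima are taken over nonempty finite sets of nonzero complex numbers and are therefore positive.

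To conclude, write $T_i=\iota(T_i')$ with $T_i'$ a non-trivial $k$-torsion point of $E_1$, and put $\xi=x(T_1')$, $\xi'=x(T_2')$. Since $\iota$ is a group isomorphism with $\iota(-T)=-\iota(T)$, the hypothesis $T_1\neq\pm T_2$ forces $T_1'\neq\pm T_2'$, hence $\xi\neq\xi'$ and $\xi,\xi'\in S_k$, with $x(T_i)=\sqrt a\,x(T_i')$. Therefore
\[
\abs{x(T_1)-x(T_2)}=\abs a^{1/2}\,\abs{\xi-\xi'}\ \geq\ \abs a^{1/2}K_k\ \geq\ \abs a^{-1/2}K_k,
\]
the last inequality because $\abs a\geq 1$ (in fact $\abs a\geq 2$). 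If moreover $x(T_1)x(T_2)\neq 0$ then $\xi,\xi'\in S_k\setminus\{0\}$ and $x(T_i)^{-1}=a^{-1/2}\,x(T_i')^{-1}$, so
\[
\abs{x(T_1)^{-1}-x(T_2)^{-1}}=\abs a^{-1/2}\,\abs{\xi^{-1}-{\xi'}^{-1}}\ \geq\ \abs a^{-1/2}K_k.
\]
There is no genuine obstacle in this argument: the only points that need a little care are verifying that $\iota$ is defined over $\overline{\Q}$ and compatible with the group law (standard theory of twists), and the elementary check that $S_k$ has at least two distinct nonzero elements so that $K_k>0$. It is also worth noting that the fourth-power-freeness of $a$ plays no role here; only $a\neq 0$ (and $\abs a\geq 1$, automatic for a nonzero integer) is used.
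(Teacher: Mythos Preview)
Your argument is correct and follows essentially the same route as the paper: both reduce to the fixed curve $E_1$ via the twist isomorphism $(x,y)\mapsto(\sqrt a\,x,a^{3/4}y)$, define $K_k$ as the corresponding minimum separation on $E_1$, and read off the scaling $x(T_i)=\sqrt a\,\xi_i$. You add a little more detail than the paper does---the explicit verification that $S_k$ contains at least two nonzero elements so that $K_k>0$, and the final step $\abs{a}^{1/2}K_k\geq\abs{a}^{-1/2}K_k$ for the first inequality---but the method is the same.
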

\begin{proof}
	Fix $k$ and consider the curve $E_1$ defined by the equation $y^2=x^3+x$. Define
	\begin{equation}\label{eqKk}
	K_k:=\min_{\substack{T_1\neq \pm T_2\\T_1,T_2\in E_1(\overline{\Q})[k]\setminus\{O\}}}\{\abs{x(T_1)-x(T_2)},\abs{x(T_1)^{-1}-x(T_2)^{-1}}\}
	\end{equation}
	where $E_1(\overline{\Q})[k]$ is the set of the $k$-torsion points of $E_1(\overline{\Q})$.
	There is a complex isomorphism $\varphi$ between $E_{1}$ and $E_a$ given by the map
	\[
	\varphi:(x,y)\to (a^{\frac 12}x,a^{\frac 34}y).
	\] If $R_1$ is a $k$-torsion point of $E_1$, then $\varphi(R_1)$ is a $k$-torsion point of $E_a$. So, if $R_1$ and $R_2$ are two $k$-torsion points of $E_a(\overline{\Q})$, then
	\[
	x(R_1)-x(R_2)={a}^{1/2}x(T_1)-{a}^{1/2}x(T_2)
	\]
	for $T_1$ and $T_2$ two $k$-torsion points of $E_1(\overline{\Q})$. Therefore,
	\begin{align*}
	&\min_{\substack{T_1\neq \pm T_2\\T_1,T_2\in E_a^\text{tors}(\overline{\Q})[k]\setminus\{O\}}}\{\abs{x(T_1)-x(T_2)}\}\\=& \abs{a}^{1/2} \min_{\substack{T_1\neq \pm T_2\\T_1,T_2\in E_1^\text{tors}(\overline{\Q})[k]\setminus\{O\}}}\{\abs{x(T_1)-x(T_2)}\}
	\end{align*}
	and
	\begin{align*}
	&\min_{\substack{T_1\neq \pm T_2\\T_1,T_2\in E_a^\text{tors}(\overline{\Q})[k]\setminus\{O\}}}\Big\{\abs{\frac1{x(T_1)}-\frac1{x(T_2)}}\Big\}\\=& \min_{\substack{T_1\neq \pm T_2\\T_1,T_2\in E_1^\text{tors}(\overline{\Q})[k]\setminus\{O\}}}\Big\{\abs{\frac1{x(T_1)}-\frac1{x(T_2)}}\Big\}\cdot\abs{a}^{-1/2}.
	\end{align*}
	Again we are assuming that $x(T_1)x(T_2)\neq 0$ in the last equation.
\end{proof}
\begin{lemma}\label{red}
	Suppose that $\psi_k^2(x)=f(x)g(x)$ with $g$ and $f$ two coprime polynomials with integer coefficients. Let $u$ and $v$ be two coprime integers with $v\geq 1$ and $\psi_k^2(u,v)$ be the homogenization of $\psi_k^2(x)$ evaluated in $u$ and $v$. Let $d=\min\{\deg f,\deg g\}$ and suppose $\max\{\abs{u},v\}\geq (K_k\abs{a}^{-1/2}/2)^{-1}$. Therefore,
	\[
	\psi_k^2(u,v)\geq \max\{\abs{u},v\}^d\Big(\frac{K_k\abs{a}^{-1/2}}{2}\Big)^d,
	\]
	if $\psi_k^2(u,v)\neq 0$.
\end{lemma}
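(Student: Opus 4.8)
The plan is to factor $\psi_k^2(u,v)=f(u,v)g(u,v)$ into the product of the two homogenizations and to estimate the two factors separately. Because $\psi_k^2$ lies in $\Z[x]$ with leading coefficient $k^2$, both $f$ and $g$ lie in $\Z[x]$ and have nonzero integer leading coefficients, so $f(u,v)$ and $g(u,v)$ are integers, and since $\psi_k^2(u,v)\neq 0$ both are nonzero; in particular $\abs{f(u,v)}\geq 1$ and $\abs{g(u,v)}\geq 1$. Set $\eps:=K_k\abs{a}^{-1/2}/2$ and $M:=\max\{\abs u,v\}$, so that $M\eps\geq 1$ by hypothesis. I will produce a factor (one of $f,g$) whose absolute value is at least $(M\eps)^{d}$; since the other factor is at least $1$, multiplying gives $\psi_k^2(u,v)\geq (M\eps)^d=\max\{\abs u,v\}^d\bigl(K_k\abs a^{-1/2}/2\bigr)^d$, which is the claim.

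By Lemma~\ref{divpol}(d) the complex roots of $\psi_k^2(x)$ are exactly the abscissas of the nontrivial $k$-torsion points of $E_a(\overline\Q)$, and since $f,g$ are coprime their root multisets partition that of $\psi_k^2$ and are disjoint as sets. By Lemma~\ref{rootsep}, any two distinct roots $\gamma\neq\gamma'$ satisfy $\abs{\gamma-\gamma'}\geq 2\eps$ and, when both are nonzero, $\abs{\gamma^{-1}-(\gamma')^{-1}}\geq 2\eps$. The key point is: \emph{among the roots of $\psi_k^2$, at most one value $\gamma_0$ has $\abs{u-\gamma_0 v}<M\eps$.} If $M=v$ this is clear, since $\abs{u-\gamma v}<v\eps$ forces $\abs{u/v-\gamma}<\eps$ and two such roots would be at distance $<2\eps$ from one another. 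If $M=\abs u\geq v$ one argues analogously using the identity $\abs{u-\gamma v}=\abs\gamma\,\abs u\,\abs{\gamma^{-1}-v/u}$ together with the separation of the reciprocal roots, after first handling roots of small absolute value, for which $\abs{u-\gamma v}\geq \abs u-\abs\gamma v$ is automatically of order $\abs u=M$. Granting this: if $\gamma_0$ exists and is a root of $f$, then every root $\gamma$ of $g$ has $\abs{u-\gamma v}\geq M\eps$, so $\abs{g(u,v)}$ — the absolute value of the leading coefficient of $g$, times $\prod_{\gamma}\abs{u-\gamma v}$ over the roots of $g$ counted with multiplicity — is at least $(M\eps)^{\deg g}\geq (M\eps)^d$; symmetrically if $\gamma_0$ is a root of $g$, giving $\abs{f(u,v)}\geq (M\eps)^d$, and if no such $\gamma_0$ exists both estimates hold.

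The part I expect to require the most care is the subcase $M=\abs u\geq v$ of the key point, that is, controlling $\abs{u-\gamma v}$ when $u/v$ is large: there $v/u$ must be close to $\gamma^{-1}$, and passing from the reciprocal-root separation of Lemma~\ref{rootsep} to a lower bound on $\abs{u-\gamma v}$ introduces a factor of $\abs\gamma$, with an additional loss when $u/v$ is close to $\pm 1$ and $\abs\gamma$ is close to $1$. This is precisely why Lemma~\ref{rootsep} records both $\abs{x(T_1)-x(T_2)}$ and $\abs{x(T_1)^{-1}-x(T_2)^{-1}}$: for a fixed $k$ there are only finitely many $k$-torsion abscissas of $E_1$, the $k$-torsion abscissas of $E_a$ are $\abs a^{1/2}$ times these (and their reciprocals are $\abs a^{-1/2}$ times $k$-torsion abscissas of $E_1$, via the automorphism $(x,y)\mapsto(a/x,ay/x^2)$ of $E_a$), so $\abs\gamma$ and $\abs{\gamma^{-1}}$ are pinned down up to quantities depending only on $k$; combined with $\abs a\geq 2$ and $M\eps\geq 1$ this makes the case analysis close. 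Everything else is elementary.
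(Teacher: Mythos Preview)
Your overall strategy matches the paper's: factor $\psi_k^2(u,v)=f(u,v)g(u,v)$, note both factors are nonzero integers so at least $1$ in absolute value, split into the cases $|u|\leq v$ and $|u|\geq v$, and in each case use one of the two separation bounds of Lemma~\ref{rootsep} to show that one of the factors is at least $(M\eps)^d$.

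Where you diverge from the paper is in your intermediate ``key point'' that at most one root $\gamma_0$ satisfies $|u-\gamma_0 v|<M\eps$. In the case $M=|u|$ this is exactly the statement that forces you into the $|\gamma|$-factor analysis you worry about, and the paper does not go through it. Instead the paper bounds the \emph{product} directly. In the case $|u|\leq v$, with the root $\zeta$ of $\psi_k^2$ nearest to $x=u/v$ lying in $g$, every root $\gamma$ of $f$ has $|x-\gamma|\geq\eps$ by the triangle inequality, so $|f(u,v)|=|c_f|\,v^{\deg f}\prod_\gamma|x-\gamma|\geq(v\eps)^{\deg f}$. In the case $|u|\geq v$, with the root $\zeta\neq0$ minimizing $|x^{-1}-\zeta^{-1}|$ lying in $g$, every root $\gamma\neq0$ of $f$ has $|x^{-1}-\gamma^{-1}|\geq\eps$, and the identity
\[
f(u,v)=c_f\prod_\gamma(u-\gamma v)=u^{\deg f}\Bigl(c_f\prod_\gamma(-\gamma)\Bigr)\prod_\gamma\bigl(x^{-1}-\gamma^{-1}\bigr)=f(0)\,u^{\deg f}\prod_\gamma\bigl(x^{-1}-\gamma^{-1}\bigr)
\]
gives $|f(u,v)|\geq|u|^{\deg f}\eps^{\deg f}$ immediately, since the constant term $f(0)$ is a nonzero integer. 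All the individual $|\gamma|$'s are absorbed into the single integer $|f(0)|\geq1$; no case split on small versus large $|\gamma|$ and no appeal to the map $(x,y)\mapsto(a/x,ay/x^2)$ (which is translation by the $2$-torsion point $(0,0)$, not an automorphism fixing $O$) is needed. One then uses $M\eps\geq1$ to pass from the exponent $\deg f$ down to $d$.
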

\begin{proof}
	Define $x=u/v$. Suppose that $\abs{u}\leq v$ and that the root $\zeta$ of $\psi_k^2$ closest to $u/v$ is a root of $g$. Then, $\abs{x-x_0}\geq K_k\abs{a}^{-1/2}/2$ for every root $x_0$ of $f$. Indeed, otherwise,
	\[
	\abs{\zeta-x_0}\leq\abs{\zeta-x}+\abs{x-x_0}\leq 2\abs{x-x_0}< K_k\abs{a}^{-1/2}
	\] and this is absurd thanks to Lemma \ref{rootsep} since $\zeta$ and $x_0$ are different abscissas of non-trivial $k$-torsion points. Observe that $x_0\neq \zeta$ considering that $x_0$ is a root of $f$, $\zeta$ is a root of $g$ and $(f,g)=1$. Denote with $f(u,v)$ the homogenization of $f$, evaluated in $u$ and $v$. Hence,
	\[
	\abs{f(u,v)}=v^{\deg f}\abs{f(x)}\geq \Big(v\frac{K_k\abs{a}^{-1/2}}{2}\Big)^{\deg f}\geq \Big(\max\{\abs{u},v\}\frac{K_k\abs{a}^{-1/2}}{2}\Big)^{d}.
	\]
	Here, we are using that $\max\{\abs{u},\abs{v}\}\geq (K_k\abs{a}^{-1/2}/2)^{-1}$.
	Observe that $g(u,v)$ is a non-zero integer since $0\neq \psi_k^2(u,v)=f(u,v)g(u,v)$ and then $\abs{g(u,v)}\geq 1.$ In conclusion,
	\[
	\psi_k^2(u,v)=\abs{f(u,v)g(u,v)}\geq \max\{\abs{u},v\}^d\Big(\frac{K_k\abs{a}^{-1/2}}{2}\Big)^d.
	\]
	If $\abs{u}\leq v$ and the root of $\psi_k^2$ closest to $u/v$ is a root of $f$, then the proof is identical.
	
	Suppose now $\abs{u}\geq v$ and that the root $\zeta\neq 0$ of $\psi_k^2$ that minimize $\abs{x^{-1}-\zeta^{-1}}$ is a root of $g$. Then, using again the triangle inequality, \[\abs{x^{-1}-x_0^{-1}}\geq K_k\abs{a}^{-1/2}/2\] for every root $x_0\neq 0$ of $f$. Therefore,
	\[
	\abs{f(u,v)}\geq \abs{u}^{\deg f}\Big(\frac{K_k\abs{a}^{-1/2}}{2}\Big)^{\deg f}\geq  \Big(\frac{\max\{\abs{u},v\}K_k}{2\abs{a}^{1/2}}\Big)^{d}.
	\]
	As above, we have $\abs{g(u,v)}\geq 1$. In conclusion,
	\[
	\psi_k^2(u,v)\geq \max\{\abs{u},v\}^d\Big(\frac{K_k\abs{a}^{-1/2}}{2}\Big)^d.
	\]
	The case when $\abs{u}\geq v$ and the root $\zeta\neq 0$ of $\psi_k^2$ that minimize $\abs{x^{-1}-\zeta^{-1}}$ is a root of $f$ is identical.
\end{proof}
Let $P$ be a non-torsion point of an elliptic curve $E_a(\Q)$. Consider the sequence $B_n=B_n(E_a,P)$. We will focus on the study of the terms $B_{mk}$ for a fixed $k$.
\begin{proposition}\label{Bmk}
	Let us fix $k$ such that $\psi_k^2=f(x)\cdot g(x)$ with $f$ and $g$ two coprime polynomials in $\Z[x]$. Let $d=\min\{\deg f,\deg g\}$. Suppose that $B_{mk}$ does not have a primitive divisor, that $d>k^2\rho(mk)$ and that
	\[
	2m^2\geq \frac{-\log(K_k\abs{a}^{-1/2}/2)+2C}{\hat{h}(P)}.
	\] Then,
	\begin{equation}\label{2mk}
	2(mk)^2\leq \frac{\log\abs{g_k}-d\log \Big(\frac{K_k\abs{a}^{-1/2}}{2}\Big)+2dC+2\log mk+2C\omega(mk)}{(\frac{d}{k^2}-\rho(mk))\hat{h}(P)},
	\end{equation}
	with
	\[
	g_k:=\gcd(\phi_k(A_m,B_m),\psi_k^2(A_m,B_m)).
	\]
\end{proposition}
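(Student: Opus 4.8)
The plan is to sandwich $\log B_{mk}$ between a lower bound coming from the factorization $\psi_k^2=fg$ and the upper bound of Lemma \ref{yv} valid when $B_{mk}$ has no primitive divisor, and then solve the resulting inequality for $(mk)^2$.

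First I would express $B_{mk}$ through the division polynomials based at the point $mP$. Writing $x(mP)=A_m/B_m$ with $(A_m,B_m)=1$, part (a) of Lemma \ref{divpol} together with the homogenization identity recorded just after that lemma gives
\[
x(mkP)=x\big(k(mP)\big)=\frac{\phi_k(A_m,B_m)}{B_m\,\psi_k^2(A_m,B_m)},
\]
whence $B_{mk}=B_m\psi_k^2(A_m,B_m)\big/\gcd\big(\phi_k(A_m,B_m),B_m\psi_k^2(A_m,B_m)\big)$. As in the proof of Lemma \ref{res}, the monicity of $\phi_k$ forces this gcd to be prime to $B_m$ (if $p\mid B_m$ then $\phi_k(A_m,B_m)\equiv A_m^{k^2}\not\equiv 0\mod p$), so it equals $g_k=\gcd(\phi_k(A_m,B_m),\psi_k^2(A_m,B_m))$; since $B_m\geq 1$, this gives $B_{mk}\geq \psi_k^2(A_m,B_m)/g_k$. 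Note also that $\psi_k^2(A_m,B_m)\neq 0$: by part (d) of Lemma \ref{divpol} a zero would make $mP$ a non-trivial $k$-torsion point, which is impossible since $P$ is non-torsion.

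Next I would apply Lemma \ref{red} with $(u,v)=(A_m,B_m)$. Its size hypothesis $\max\{\abs{A_m},B_m\}\geq (K_k\abs{a}^{-1/2}/2)^{-1}$ reads $h(mP)\geq -\log(K_k\abs{a}^{-1/2}/2)$, and this follows from the third assumption of the proposition: by Lemma \ref{C} and the quadraticity $\hat{h}(mP)=m^2\hat{h}(P)$ of the canonical height,
\[
h(mP)\geq 2\hat{h}(mP)-2C=2m^2\hat{h}(P)-2C\geq -\log(K_k\abs{a}^{-1/2}/2).
\]
Lemma \ref{red} then yields $\psi_k^2(A_m,B_m)\geq H(mP)^d(K_k\abs{a}^{-1/2}/2)^d$, so, taking logarithms and using $h(mP)=\log H(mP)\geq 2m^2\hat{h}(P)-2C$ once more,
\[
\log B_{mk}\ \geq\ 2dm^2\hat{h}(P)-2dC+d\log\!\Big(\frac{K_k\abs{a}^{-1/2}}{2}\Big)-\log g_k.
\]

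Finally, since $B_{mk}$ has no primitive divisor, Lemma \ref{yv} gives $\log B_{mk}\leq 2\log(mk)+2(mk)^2\rho(mk)\hat{h}(P)+2C\omega(mk)$. Combining the two estimates and writing $2dm^2=2(mk)^2\cdot d/k^2$, the terms involving $\hat{h}(P)$ collect on the left as $2(mk)^2\big(d/k^2-\rho(mk)\big)\hat{h}(P)$; by the hypothesis $d>k^2\rho(mk)$ this coefficient is positive, and dividing through by it gives exactly (\ref{2mk}) (using $\log g_k=\log\abs{g_k}$). The argument is thus just an assembly of results already in hand; the points that need care are the identification of the two gcd's—so that $g_k$, defined without the factor $B_m$, genuinely controls $B_{mk}$—and the check that the threshold $2m^2\hat{h}(P)\geq -\log(K_k\abs{a}^{-1/2}/2)+2C$ is precisely what makes Lemma \ref{red} applicable.
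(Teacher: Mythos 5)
Your proposal is correct and follows essentially the same route as the paper: verify the size hypothesis of Lemma \ref{red} from the assumption on $2m^2\hat{h}(P)$ via Lemma \ref{C}, lower-bound $\log B_{mk}$ through the factorization of $\psi_k^2$ evaluated at $(A_m,B_m)$, and play this against the upper bound of Lemma \ref{yv}. Your explicit observation that the gcd is prime to $B_m$ (so that it equals $g_k$ as defined in the statement) is a point the paper leaves implicit, but otherwise the two arguments coincide.
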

\begin{remark}
	If we fix $k$ and we let $m$ grow, then the inequality of the proposition does not hold, since the LHS is quadratic in $m$ and the RHS is logarithmic. So, if we take $k$ so that $\psi_k^2$ is reducible and $d>k^2\rho(mk)$, then $B_{mk}$ has a primitive divisor for every $m$ large enough. This is the key point for the proof of Theorem \ref{thm2}.
\end{remark}
\begin{proof}
	Thanks to the hypotheses, $H(mP)\geq( K_k\abs{a}^{-1/2}/2)^{-1}$ since
	\[
	h(mP)\geq 2m^2\hat{h}(P)-2C\geq -\log (K_k\abs{a}^{-1/2}/2).
	\]
	Moreover, $\psi_k^2(A_m,B_m)\neq 0$ since $P$ is a non-torsion point.
	So, we can apply Lemma \ref{red} to $\psi_k^2$.
	Using (\ref{psi}), Lemma \ref{red} and that $B_m\geq 1$, we have
	\[
	B_{mk}=\frac{B_m\psi_k^2(A_m,B_m)}{\gcd(\phi_k(A_m,B_m),\psi_k^2(A_m,B_m))}\geq \frac{H(mP)^d\Big(\frac{K_k\abs{a}^{-1/2}}{2}\Big)^d}{\abs{g_k}}.
	\]
	Considering the logarithms,
	\begin{align*}
	2dm^2\hat{h}(P)&\leq dh(mP)+2dC\\&\leq \log B_{mk}+\log\abs{g_k}-d\log \Big(\frac{K_k\abs{a}^{-1/2}}{2}\Big)+2dC.
	\end{align*}
	Thanks to Lemma \ref{yv}, if $B_{mk}$ does not have a primitive divisor, then
	\[
	\log B_{mk}\leq 2\log mk+2(mk)^2\rho(mk)\hat{h}(P)+2C\omega(mk)
	\]
	and so
	\begin{align*}
	2m^2k^2\Big(\frac{d}{k^2}-\rho(mk)\Big)\hat{h}(P)\leq& \log\abs{g_k}-d\log \Big(\frac{K_k\abs{a}^{-1/2}}{2}\Big)\\+&2dC+2\log mk+2C\omega(mk).
	\end{align*}
\end{proof}
\begin{remark}
	If $k=m_1m_2$ with $(m_1,m_2)=1$, then $f:=\psi_{m_1}^2\psi_{m_2}^2$ divides $\psi_k^2$. Anyway, we can never apply the previous proposition using only this observation. Indeed, in this case, the hypothesis $d>k^2\rho(mk)$ fails. We have $d\leq \deg(f)=m_1^2-1+m_2^2-1$ and \[
	\rho(mk)\geq  \rho(k)=\rho(m_1)+\rho(m_2)\geq \frac{1}{m_1^2}+\frac{1}{m_2^2}.
	\]
	So,
	\[
	d\leq m_1^2-1+m_2^2-1< m_1^2+m_2^2=k^2\Big(\frac{1}{m_1^2}+\frac{1}{m_2^2}\Big)\leq k^2\rho(mk).
	\]
\end{remark}
Now, we want to find some cases where $\psi_p$ is reducible for $p$ a prime. We will show that this happens for a lot of primes. We briefly recall some classical facts on the elliptic curves, for the details see \cite[Section III.9]{arithmetic}. We will denote by $\End(E)$ the ring of the endomorphisms (defined over $\overline{\Q}$) of $E$. Since the map given by the multiplication by a rational integer is an endomorphism, it follows that $\Z\subseteq \End(E)$.  If $\End(E)\setminus \Z$ is not empty, we say that $E$ has complex multiplication. Consider the embedding $\End(E)\hookrightarrow \End(E)\otimes_\Z\Q$. If $E$ has complex multiplication, then every element $\varphi$ of $\End(E)$ can be written as $a+\gamma b$ with $a$ and $b$ in $\Q\subseteq  \End(E)\otimes_\Z\Q$ and $\gamma$ such that $\gamma^2<0$ and $\gamma^2\in \Q$. We say that an endomorphism $\varphi$ splits if there exist $\alpha$ and $\beta$ that are not isomorphisms so that $\varphi=\alpha\beta$. We define the norm as in \cite[Section III.9]{arithmetic}. If $\Norm(\varphi)=1$, then $\varphi$ is an isomorphism and $\Norm(n)=n^2$ for $n\in \Z$. Moreover, $\Norm(\alpha \beta)=\Norm(\alpha)\Norm(\beta)$ for every $\alpha$ and $\beta$ in $\End(E)$. The group $\Gal(\overline{\Q}/\Q)$ has a natural action over $\End(E)$.
\begin{lemma}\label{2p-2}
Suppose that $E$ is a rational elliptic curves that has complex multiplication (not necessarily with $j(E)=1728$). Take $p\neq 2$ a prime that splits in $\End(E)$. Hence, $\psi_p^2(x)=f(x)g(x)$ with $f$ and $g$ two coprime polynomials with integer coefficients and \[d=\min\{\deg f,\deg g\}=2p-2. \]
\end{lemma}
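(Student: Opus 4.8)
\emph{Strategy.} The plan is to produce the factorization geometrically. The hypothesis makes the multiplication‑by‑$p$ isogeny split into two isogenies of degree $p$; their kernels cut $E[p]$ into a pair of lines whose \emph{union} is defined over $\Q$, and the $x$-coordinates of that union give a rational, hence integral, monic factor of $\psi_p^2(x)$ of degree exactly $2p-2$.

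\emph{Splitting the isogeny.} Write $\mathcal O:=\End(E)$, an order in an imaginary quadratic field $K$; since $E/\Q$ has CM, $\mathcal O$ has class number one, so it is a PID. By hypothesis $p$ splits in $\mathcal O$, i.e. $p\mathcal O=\mathfrak p\,\overline{\mathfrak p}$ with $\mathfrak p\ne\overline{\mathfrak p}$; choosing a generator $\alpha$ of $\mathfrak p$ and letting $\beta=\overline\alpha$ (a generator of $\overline{\mathfrak p}$), we get $[p]=\alpha\beta$ in $\mathcal O$ with $\Norm(\alpha)=\Norm(\beta)=p$ and $\alpha,\beta$ non-associate (in particular not isomorphisms). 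As the characteristic is zero both $\alpha,\beta$ are separable, so $\#\Ker\alpha=\deg\alpha=\Norm(\alpha)=p$ and likewise $\#\Ker\beta=p$. Since $\mathcal O$ is commutative, $\alpha\beta=\beta\alpha=[p]$, hence $\Ker\alpha,\Ker\beta\subseteq\Ker[p]=E[p]$. They are distinct: $\Ker\alpha=\Ker\beta$ would force $\beta=u\circ\alpha$ for some $u\in\mathcal O^{\times}$, contradicting that $\alpha,\beta$ are non-associate. Thus $E[p]=\Ker\alpha\oplus\Ker\beta$ as $\mathbb F_p$-vector spaces, and $W:=(\Ker\alpha\cup\Ker\beta)\setminus\{O\}$ has exactly $2(p-1)$ points; their $x$-coordinates form a set $S$ with $\#S=p-1$, because $x(Q)=x(-Q)$, each of $\Ker\alpha,\Ker\beta$ is closed under negation, and distinct $\pm$-pairs give distinct $x$-coordinates (using $\Ker\alpha\cap\Ker\beta=\{O\}$).

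\emph{Rationality.} The group $G=\Gal(\overline\Q/\Q)$ acts on $\mathcal O=\End(E)$ by ring automorphisms, and $\mathcal O$ has only two of these, the identity and conjugation, so ${}^{\sigma}\alpha\in\{\alpha,\beta\}$ for every $\sigma\in G$. This action is compatible with the action on torsion, $\sigma(\varphi(Q))={}^{\sigma}\varphi(\sigma Q)$, whence $\sigma(\Ker\varphi)=\Ker({}^{\sigma}\varphi)$; taking $\varphi\in\{\alpha,\beta\}$ shows $G$ permutes $\{\Ker\alpha,\Ker\beta\}$, hence fixes $W$ setwise and therefore fixes the set $S\subset\overline\Q$. (All of this is standard complex multiplication theory; see \cite[Section III.9]{arithmetic}.) Consequently $f_0(x):=\prod_{s\in S}(x-s)\in\Q[x]$, and so is $f(x):=f_0(x)^2$, a monic polynomial of degree $2p-2$.

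\emph{Conclusion and main difficulty.} By Lemma \ref{divpol}(d) the roots of $\psi_p^2(x)$ are exactly the $x$-coordinates of the non-trivial $p$-torsion points, each with multiplicity $2$ (indeed $\psi_p$ has degree $(p^2-1)/2$ and that many distinct roots). Since the $x$-coordinates of the points of $W$ are exactly $S$ and no point of $E[p]\setminus(W\cup\{O\})$ shares an $x$-coordinate with a point of $W$, the polynomial $f$ divides $\psi_p^2$ in $\overline\Q[x]$; being monic and rational it divides $\psi_p^2\in\Z[x]$ already in $\Z[x]$ by Gauss's lemma, so $g:=\psi_p^2/f\in\Z[x]$ and $\psi_p^2=fg$. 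The factors are coprime because $S$ is disjoint from the root set of $g$, and
\[
\min\{\deg f,\deg g\}=\min\{\,2p-2,\ (p^2-1)-(2p-2)\,\}=\min\{\,2p-2,\ (p-1)^2\,\}=2p-2
\]
for every $p\ge 3$. The only non-formal point is the input invoked in the rationality step — that $G$ acts on $\End(E)$ through ring automorphisms of $\mathcal O$ and that this action is compatible with the Galois action on $E[p]$ — which must be quoted carefully; in the intended application it is completely explicit, since $\End(E)=\Z[i]$, the CM action $[i]\colon(x,y)\mapsto(-x,iy)$ is defined over $\Q(i)$, and the prime $\pi$ of $\Z[i]$ above $p\equiv 1\bmod 4$ satisfies $\pi\ne\overline\pi$. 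Everything else is bookkeeping with degrees and Gauss's lemma.
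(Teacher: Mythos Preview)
Your proof is correct and follows essentially the same route as the paper's: both exploit the factorization $[p]=\alpha\overline\alpha$ to produce two order-$p$ subgroups of $E[p]$ whose union is Galois-stable, then take the (squared) vanishing polynomial of their $x$-coordinates as $f$. The only cosmetic differences are that the paper describes the same two subgroups as the \emph{images} $\alpha(E[p])$ and $\overline\alpha(E[p])$ rather than as $\Ker\alpha,\Ker\overline\alpha$, and verifies Galois-stability by the explicit computation $\gamma^\sigma=\pm\gamma$ instead of invoking $\Aut(\mathcal O)=\{1,\overline{\,\cdot\,}\}$; your presentation is slightly cleaner in making the squaring $f=f_0^2$ and the appeal to Gauss's lemma explicit, while the paper avoids your extra input of class number one by building principality into its definition of ``$p$ splits''.
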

\begin{proof}
 Let $p$ be a prime that splits in $\End(E)$, i.e. there exist $\alpha$ and $\beta$ in $\End(E)\setminus\Aut(E)$ such that
\[
\alpha \beta=p.
\]
 Then $\alpha=a+\gamma b$ with $\gamma$ defined as before. Let $\overline{\alpha}=a-b\gamma$ and take
 $g\in \Gal(\overline{\Q}/\Q)$. Hence,
\[
(\gamma^g)^2=(\gamma^2)^g=\gamma^2
\] 
since $\gamma^2$ is rational
and we conclude $\gamma^g=\pm \gamma$. So, $\alpha^g=a^g+\gamma^gb^g=a\pm\gamma b$ that is $\alpha$ or $\overline{\alpha}$. We denote with $E[p]$ the set of the $p$-torsion points of $E(\overline{\Q})$ that has $p^2$ elements and it is invariant under the action of $\Gal(\overline{\Q}/\Q)$. Consider the set$\{ \alpha(E[p])\cup\overline{\alpha}(E[p])\}$; we will show that it is invariant under the action of the Galois group and has $2p-1$ elements. If $P=\alpha(Q)$, then $P^g=\overline{\alpha}(Q^g)$ or $P^g=\alpha(Q^g)$ and then $P^g$ is in the set $\{\alpha(E[p])\cup\overline{\alpha}(E[p])\}$. Therefore, the set $\{ \alpha(E[p])\cup\overline{\alpha}(E[p])\}$ is invariant under the action of $\Gal(\overline{\Q}/\Q)$.
Observe that $\Norm(\alpha)=p$ since it must be a divisor of $p^2$ and cannot be $1$ and $p^2$ since $\alpha$ and $\beta$ are not isomorphisms. So, \[\Norm(\alpha \overline{\alpha})=\Norm(\alpha)\Norm(\overline{\alpha})=p^2\] and then $\alpha\overline{\alpha}$ is a rational with norm $p^2$, that can be only $p$ or $-p$. Moreover, for the properties of the Norm, $\#\Ker(\alpha)=\Norm(\alpha)=p$ and $\overline{\alpha}(E[p])\subseteq \Ker(\alpha)$ since \[O=\pm p(E[p])=\alpha(\overline{\alpha}(E[p])).\] 
Suppose that $\Ker(\alpha)\cap \Ker(\overline{\alpha})\neq \{O\}$. Take $P\neq O$ in the intersection. Hence $2a(P)=\alpha(P)+\overline{\alpha}(P)=O$ and in the same way $2b(P)=O$, that implies $p|\Norm(2a),\Norm(2b)$. Since $2a$ and $2b$ are both integers, then $\Norm(2a)$ and $\Norm(2b)$ are both squares. So, $p^2$ must divide their norm and then \[p^2|\Norm(2\alpha)=4p,\] that is absurd. Hence, $\Ker(\alpha) \cap \Ker(\overline{\alpha})=\{O\}$ and therefore $\overline{\alpha}(E[p])$ and ${\alpha}(E[p])$ have trivial intersection. The map $\alpha:E[p]\to E[p]$ has kernel with $p$ elements and then the image has $p$ elements (recall that $E[p]$ has $p^2$ elements). We conclude that $\{ \alpha(E[p])\cup\overline{\alpha}(E[p])\}$ has $2p-1$ elements. So, if $f$ is the polynomial with roots the abscissas of the points of the set $\{\alpha(E[p])\cup \overline{\alpha}(E[p])\}\setminus\{O\}$, then $f$ has degree $2p-2$ and $f\in \Q[x]$ since the set of the roots is invariant under the action of $\Gal(\overline{\Q}/\Q).$ We conclude by observing that the roots of $f$ are roots of $\psi_p^2$ and then $f$ divides $\psi_p^2$. 

Let $g=\psi_p^2/f$. The polynomial $g$ has integral coefficients and has degree $p^2-1-(2p-2)=p^2-2p+1$. So, $d=\min\{\deg f,\deg g\}=2p-2$ considering that 
\[
p^2-2p+1\geq 2p-2
\]
for $p\geq 3$. It remains to prove that the two polynomials are coprime. Otherwise, there exists $x_0\in \overline{\Q}$ that is a root of $f$ and $g$. Hence, there are $P, P'\in E[p]\setminus\{O\}$ such that $x_0=x(P)$ with $P\in \{\alpha(E[p])\cup \overline{\alpha}(E[p])\}$ and $x_0=x(P')$ with $P'\notin \{\alpha(E[p])\cup \overline{\alpha}(E[p])\}$. Since $x(P)=x_0=x(P')$, then $P=-P'$ and this is absurd since $\{\alpha(E[p])\cup \overline{\alpha}(E[p])\}$ is invariant under the multiplication by $-1$.
\end{proof}
\begin{corollary}
	Take $p\neq 2$ a prime that splits in $\End(E)$. Then, $\psi_p(x)$ is reducible.
\end{corollary}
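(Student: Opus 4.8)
The plan is to deduce the reducibility of $\psi_p(x)$ from the factorization $\psi_p^2 = f(x)g(x)$ furnished by Lemma \ref{2p-2}. The one point requiring care — and the reason the corollary is not literally a restatement of Lemma \ref{2p-2} — is that a nontrivial factorization of $\psi_p^2$ does not immediately produce one of $\psi_p$; but here it does, precisely because $f$ and $g$ are coprime.

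First I would record the degree bookkeeping. Since $p \neq 2$ is prime we have $p \geq 3$ and $p$ odd, so by part (c) of Lemma \ref{divpol} the polynomial $\psi_p$ lies in $\Z[x]$ and is an honest polynomial in $x$ alone, of degree $(p^2-1)/2$ by part (d). By Lemma \ref{2p-2} we may write $\psi_p^2 = f(x)g(x)$ with $f,g \in \Z[x]$ coprime, $\deg f = 2p-2$, and hence $\deg g = (p^2-1) - (2p-2) = (p-1)^2$.

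Next I would pass to the unique factorization domain $\Q[x]$ and use the elementary fact that if a square equals a product of two coprime elements of a UFD, then each factor is, up to a unit, itself a square. Concretely: every irreducible factor of $\psi_p$ divides $\psi_p^2 = fg$, and by coprimality of $f$ and $g$ it divides exactly one of them; grouping the irreducible factors of $\psi_p$ according to whether they divide $f$ or $g$ yields a factorization $\psi_p = h_1 h_2$ in $\Q[x]$ with $f = c\,h_1^2$ and $g = c^{-1} h_2^2$ for some $c \in \Q^\times$.

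Finally I would compare degrees: $2\deg h_1 = \deg f = 2p-2$ gives $\deg h_1 = p-1 \geq 2$, and $2\deg h_2 = \deg g = (p-1)^2$ gives $\deg h_2 = (p-1)^2/2 \geq 2$, both positive since $p \geq 3$. Thus $\psi_p = h_1 h_2$ exhibits $\psi_p$ as a product of two non-constant polynomials over $\Q$ (and over $\Z$ by Gauss's lemma), so $\psi_p(x)$ is reducible. I do not expect any genuine obstacle in this argument; the only step that must not be skipped is the use of unique factorization together with the coprimality of $f$ and $g$ to descend the splitting of $\psi_p^2$ to $\psi_p$.
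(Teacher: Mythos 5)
Your proof is correct and follows essentially the same route as the paper: both arguments use unique factorization in $\Q[x]$ to observe that the coprime, non-constant factors $f$ and $g$ of $\psi_p^2$ force $\psi_p$ itself to have at least two distinct irreducible factors. Your version merely adds explicit degree bookkeeping ($\deg h_1 = p-1$, $\deg h_2 = (p-1)^2/2$), which is harmless but not needed beyond knowing that $f$ and $g$ are non-constant.
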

\begin{proof}
	Let $\psi_p^2(x)=\prod_i p_i(x)^{a_i}$ be the factorization in prime factors of $\psi_p^2$. Thanks to the previous lemma we know that $\psi_p^2$ has at least two prime divisors. Since $\psi_p^2$ is the square of $\psi_p$, then $a_i$ is even for every $i$ and so $a_i=2b_i$. So,
	\[
	\psi_p(x)=\pm \prod_i p_i(x)^{b_i}
	\]
	and then $\psi_p$ is reducible.
\end{proof}
We use the previous lemma to prove that $\psi_p(x)$ is reducible when $p\equiv 1 \mod 4$, if we consider the curve $E_a$.
\begin{lemma}\label{split}
	Let $p$ be a prime congruent to $1$ modulo $4$. So, $p$ splits in the ring $\End(E_a)$.
\end{lemma}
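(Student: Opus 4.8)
The plan is to prove the lemma by writing the multiplication‑by‑$p$ endomorphism as a product of two non‑isomorphisms, using that $E_a$ has complex multiplication by $\Z[i]$.

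First I would make the complex multiplication explicit. The map $\gamma\colon (x,y)\mapsto(-x,iy)$ is an endomorphism of $E_a$ defined over $\overline{\Q}$: if $y^2=x^3+ax$ then $(iy)^2=-y^2=-x^3-ax=(-x)^3+a(-x)$, so $(-x,iy)\in E_a(\overline{\Q})$, and $\gamma$ fixes $O$, hence is an isogeny. Composing $\gamma$ with itself gives $(x,y)\mapsto(x,-y)$, i.e. $\gamma^2=[-1]$, so $\gamma$ has order $4$ in $\Aut(E_a)$ and $\Z[\gamma]\cong\Z[i]\subseteq\End(E_a)$. In the notation of the preliminaries, $E_a$ then has complex multiplication with the element $\gamma$ satisfying $\gamma^2=-1\in\Q$.

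Next I would invoke Fermat's two‑square theorem: since $p\equiv 1\pmod 4$, there are integers $m,n$ with $p=m^2+n^2$. Setting $\alpha=m+n\gamma$ and $\overline{\alpha}=m-n\gamma$ in $\Z[\gamma]\subseteq\End(E_a)$, one has
\[
\alpha\overline{\alpha}=m^2-n^2\gamma^2=m^2+n^2=[p],
\]
so it only remains to check that $\alpha$ and $\overline{\alpha}$ are not isomorphisms. I would do this via the norm: the degree of an isogeny equals that of its dual, and in the CM order the dual of $\varphi$ is its complex conjugate, so $[\Norm(\alpha)]=\overline{\alpha}\alpha=[p]$ forces $\Norm(\alpha)=p$, and likewise $\Norm(\overline{\alpha})=p$. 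Since $\Norm(\varphi)=1$ exactly when $\varphi$ is an isomorphism and $p>1$, neither $\alpha$ nor $\overline{\alpha}$ lies in $\Aut(E_a)$; hence $[p]=\alpha\overline{\alpha}$ exhibits $p$ as split in $\End(E_a)$.

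I do not expect a genuine obstacle here, since the argument only combines standard facts about complex multiplication with Fermat's theorem. The two points that need attention are checking that the explicit endomorphism $\gamma$ squares to $[-1]$ (rather than to $[1]$ or $[a]$), which the substitution above settles, and pinning down $\Norm(m+n\gamma)=m^2+n^2$ so that the factors have norm $p>1$ and are truly non‑isomorphisms. Alternatively one could simply cite $\End(E_a)=\Z[i]$ with $\Aut(E_a)=\Z[i]^{\times}=\{\pm 1,\pm i\}$ from the references on complex multiplication and read everything off from the factorization $p=(m+ni)(m-ni)$ in $\Z[i]$, but only the inclusion $\Z[\gamma]\subseteq\End(E_a)$ is actually required.
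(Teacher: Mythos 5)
Your proposal is correct and follows essentially the same route as the paper: the explicit endomorphism $(x,y)\mapsto(-x,iy)$ squaring to $[-1]$, Fermat's two-square theorem to write $p=m^2+n^2$, the factorization $[p]=(m+n\gamma)(m-n\gamma)$, and a norm computation showing both factors have norm $p>1$ and hence are not isomorphisms. The only cosmetic difference is that you identify the norm via the dual isogeny, whereas the paper deduces $\Norm(\varphi_1)=\Norm(\varphi_2)=p$ from multiplicativity of the norm and the fact that conjugate endomorphisms have equal norm; both are standard and equivalent here.
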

\begin{proof}
	Let $i$ be the endomorphism of $E_a$ so that
	\[
	i(x,y)=(-x,iy).
	\]
	This is an endomorphism since
	\[
	(iy)^2=-y^2=-x^3-ax=(-x)^3+a(-x)
	\]
	and then the points in the image of the map are still in $E_a$.
	Observe that $i^2=[-1]$, where $[-1]$ is the inverse endomorphism. This shows that $E_a$ has complex multiplication. Thanks to the Fermat's theorem on sums of two squares, we know that there exist $a$ and $b$ integers so that
	\[
	a^2+b^2=p.
	\]
	Here we are using the hypothesis $p\equiv 1 \mod{4}$. Let $\varphi_1$ be the endomorphism $a+ib$ and $\varphi_2$ be the endomorphism $a-ib$. Hence,
	\[
	\varphi_1\varphi_2=(a+ib)(a-ib)=a^2-(ib)^2=a^2+b^2=p.
	\]
	Since $\varphi_1$ and $\varphi_2$ are conjugate, then they have the same norm. So,
	\[
	p^2=\Norm{p}=\Norm{\varphi_1}\Norm{\varphi_2}=\Norm{\varphi_1}^2=\Norm{\varphi_2}^2
	\]
	and then $\varphi_1$ and $\varphi_2$ are not isomorphisms. Therefore, $p$ splits.
\end{proof}
	We sum up the present situation: Thanks to Lemma \ref{2p-2} and \ref{split}, we know that $\psi_p$ is reducible for $p\equiv 1 \mod{4}$. We want to show that $B_{mp}$ has a primitive divisor, under some hypothesis on $m$. Using Proposition \ref{Bmk}, we know that, in order to prove that $B_{mp}$ has a primitive divisor, we have to show that an inequality does not hold. In the inequalities of Proposition \ref{Bmk} every term that appears has been studied, except for $K_k$. So, in the next pages, we will compute an explicit bound for $K_k$ as defined in Lemma \ref{rootsep}.
	
	 Consider the lattice $\Lambda\subseteq \C$ generated by $1$ and $i$. As is shown in \cite[Chapter VI]{arithmetic} there is an isomorphism $\varphi$ between $\C/\Lambda$ and $E(\C)$ given by the map
\[
\varphi(z)=(\wp(z),\frac{\wp'(z)}{2},1)
\]
where
\[
\wp(z)=\frac{1}{z^2}+\sum_{\omega\in \Lambda\setminus 0}\frac{1}{(z-\omega)^2}-\frac{1}{\omega^2}.
\] 
The curve $E$ is defined by the equation $y^2=x^3-15G_4x-35G_6$ with 
\[
G_4:=\sum_{\omega\in \Lambda\setminus 0}\frac{1}{\omega^4}
\]
and
\[
G_6:=\sum_{\omega\in \Lambda\setminus 0}\frac{1}{\omega^6}.
\] 
Thanks to \cite[Proposition VI.3.6]{arithmetic} and \cite[Exercise 6.6]{arithmetic}, we have $j(E)=1728$. So, $G_6=0$ and hence $E$ is defined by the equation 
\[
y^2=x^3-15G_4x.
\]
Define
\[
\sigma(z):=z\prod_{\omega\in \Lambda\setminus 0}\Big(1-\frac z\omega\Big)e^{\frac z\omega+\frac 12(\frac z\omega)^2}
\]and  \[\Lambda^*:=\{\lambda\in\Lambda |\abs{\lambda}>2\}.\] For the details on $\sigma$, see \cite[Lemma VI.3.3]{arithmetic}. Thanks to \cite[Exercise 6.3]{arithmetic}, given $z_1$ and $z_2$ two complex numbers, we have
\[
x(\varphi(z_1))-x(\varphi(z_2))=-\frac{\sigma(z_1+z_2)\sigma(z_1-z_2)}{\sigma(z_1)^2\sigma(z_2)^2}.
\]
For this reason, in order to compute $K_k$, we need to study the function $\sigma$.

We start by computing an upper and a lower bound for the absolute value of $\sigma$ evaluated at $z$, for $z\in \C$ such that $nz\in \Lambda$ and such that $0$ is the element of $\Lambda$ closest to $z$. So, $\varphi(z)$ is a $n$-torsion point of $E(\C)$. In order to do so, we need a preliminary lemma.
\begin{lemma}\label{sum}
	Let $k\geq 3$. Then,
	\[
	\sum_{\omega\in \Lambda^*}\abs{\frac{1}{\omega^k}}\leq \frac {21}{k}.
	\]
\end{lemma}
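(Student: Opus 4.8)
The plan is to estimate the lattice sum by grouping the Gaussian integers $\Lambda=\Z[i]$ into sup-norm shells and then comparing a tail with an integral. For $n\geq 1$ set $S_n=\{a+bi\in\Lambda:\max(\abs a,\abs b)=n\}$, the $n$-th square shell. A direct count gives $\abs{S_n}=(2n+1)^2-(2n-1)^2=8n$, and every $\omega=a+bi\in S_n$ satisfies $\abs\omega^2=a^2+b^2\geq n^2$, hence $\abs\omega^{-k}\leq n^{-k}$. The first step is the bookkeeping for small lattice points: the points of $\Lambda$ with $\abs\omega\leq 2$ are exactly $0$, $\pm1,\pm i$, $\pm1\pm i$, and $\pm2,\pm2i$, so $\Lambda^*$ meets neither $S_0$ nor $S_1$, meets $S_2$ precisely in the $12$ points other than $\pm2,\pm2i$ (each of norm at least $\sqrt5$), and contains all of $S_n$ for $n\geq3$.

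Combining these observations, and using $5^{-k/2}\leq 4^{-k/2}=2^{-k}$ together with the fact that $x\mapsto x^{1-k}$ is decreasing (so the tail sum is bounded by the integral from $2$), I would obtain
\[
\sum_{\omega\in\Lambda^*}\abs{\omega}^{-k}\leq 12\cdot 5^{-k/2}+\sum_{n\geq3}8n\cdot n^{-k}\leq 12\cdot 2^{-k}+8\int_2^\infty x^{1-k}\,dx=12\cdot 2^{-k}+\frac{2^{5-k}}{k-2}.
\]

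It then remains to check that $12\cdot 2^{-k}+\frac{2^{5-k}}{k-2}\leq \frac{21}{k}$ for every $k\geq3$, equivalently $12k\,2^{-k}+\frac{32k}{k-2}\,2^{-k}\leq 21$. The term $k\mapsto 12k\,2^{-k}$ is decreasing for $k\geq 2$, and $k\mapsto \frac{k}{k-2}\,2^{-k}$ is a product of two positive decreasing functions for $k>2$; hence the left-hand side is nonincreasing on $[3,\infty)$ and it suffices to evaluate it at $k=3$, where it equals $12\cdot\frac38+32\cdot\frac{3}{8}=\frac{33}{2}<21$. This finishes the proof.

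I do not expect a genuine obstacle here: the only points requiring care are the exact list of lattice points excluded from $\Lambda^*$ (so that the shells $S_0,S_1,S_2$ contribute correctly) and phrasing the final estimate so that a single monotonicity argument replaces checking infinitely many values of $k$. One could also be slightly cruder—e.g.\ bounding $\sum_{\omega\in\Lambda^*}\abs\omega^{-k}\leq 8\sum_{n\geq2}n^{1-k}$—but the bound above already leaves a comfortable margin.
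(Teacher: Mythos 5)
Your proof is correct, and it takes a genuinely different route from the paper's. The paper splits $\Lambda^*=\{a+ib: a^2+b^2\geq 5\}$ into the axis points and the four open quadrants, bounds the axis contribution by a one-dimensional integral, and bounds each quadrant by comparing every lattice point with a unit square so that the whole quadrant sum is dominated by the two-dimensional integral $\int_{\Re x,\Im x\geq 0,\,\abs{x}\geq 1}\abs{x}^{-k}\,dx=\frac{\pi}{2(k-2)}$; the resulting bound is roughly $\frac{1}{k-1}+\frac{2\pi}{k-2}$, which decays only like $1/k$. You instead partition $\Lambda^*$ into sup-norm shells $S_n$ with $\abs{S_n}=8n$ and $\abs{\omega}\geq n$ on $S_n$, isolate the twelve points of $S_2$ lying in $\Lambda^*$ (each of modulus at least $\sqrt5$), and reduce everything to the one-dimensional comparison $\sum_{n\geq 3}8n^{1-k}\leq 8\int_2^\infty x^{1-k}\,dx$. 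Your bookkeeping of $S_0$, $S_1$, $S_2$ against the defining condition $\abs{\lambda}>2$ is exact, the monotonicity argument reducing the final inequality to the single evaluation at $k=3$ is valid for all real $k\geq 3$, and $\frac{33}{2}<21$ closes the argument. A side benefit of your decomposition is that your bound $12\cdot 2^{-k}+\frac{2^{5-k}}{k-2}$ decays exponentially in $k$, which reflects the true behaviour of the sum (dominated by the points of modulus $\sqrt 5$) far better than the paper's $O(1/k)$ estimate; both, of course, comfortably clear the stated threshold $21/k$, which is all the paper needs.
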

\begin{proof}
	Every element of $\Lambda^*$ can be written as $a+ib$ with $a$ and $b$ two integers such that $a^2+b^2\geq 5$.
	Fix $a$ and $b$ two strictly positive integers such that $a^2+b^2\geq 5$. Consider the square $Q_{a,b}$ on the complex plane with vertices $(a-1,b-1)$, $(a-1,b)$, $(a,b-1)$ and $(a,b)$. Observe that, for $x\in Q_{a,b}$, $\abs{x}\leq \abs{a+ib}$ and then
	\[
	\frac{1}{\abs{a+ib}^k}\leq \frac{1}{\abs{x}^k}.
	\]
	Moreover, the intersection of two different squares has measure $0$.
	For every $a$ and $b$, if $x\in Q_{a,b}$, then $\abs{x}\geq 1$. Therefore,
	\[
	\frac{1}{\abs{a+ib}^k}\leq \int_{Q_{a,b}}\frac{1}{\abs{x}^k} dx
	\]
	and
	\begin{equation}\label{intxk}
	\sum_{\substack{a,b>0\\a^2+b^2\geq 5}}\frac{1}{\abs{a+ib}^k}\leq 	\sum_{\substack{a,b>0\\a^2+b^2\geq 5}}\int_{Q_{a,b}}\frac{1}{\abs{x}^k} dx\leq \int_{\substack{\Re x\geq0\\\Im x\geq 0\\\abs{x}\geq 1}}\frac{1}{\abs{x}^k} dx=\frac{\pi}{2(k-2)}
	\end{equation}
	where $\Re x$ represents the real part of the complex number $x$ and $\Im x$ represents the imaginary part.
	Moreover, 
	\begin{equation}\label{ak}
	\sum_{a^2\geq 5}\frac{1}{\abs{a}^k}= 2\sum_{a=3}^\infty\frac{1}{a^k}\leq 2\int_{2}^\infty \frac{1}{a^k}da=\frac{2}{2^{k-1}(k-1)}.
	\end{equation}
	 Finally, using (\ref{intxk}) and (\ref{ak}),
	\begin{align*}
	\sum_{\omega\in \Lambda^*}\abs{\frac{1}{\omega^k}}&=\sum_{a^2+b^2\geq 5}\frac{1}{\abs{a+ib}^k}\\&=	\sum_{a^2\geq 5}\frac{1}{\abs{a}^k}+\sum_{b^2\geq 5}\frac{1}{\abs{b}^k}+	\sum_{\substack{a,b\neq 0\\a^2+b^2\geq 5}}\frac{1}{\abs{a+ib}^k}\\&=\sum_{a^2\geq 5}\frac{1}{\abs{a}^k}+\sum_{b^2\geq 5}\frac{1}{\abs{b}^k}+	4\sum_{\substack{a>0\\b> 0\\a^2+b^2\geq 5}}\frac{1}{\abs{a+ib}^k}\\&\leq  \frac{2}{2^{k-1}(k-1)}+\frac{2}{2^{k-1}(k-1)}+4\int_{\substack{\Re x\geq 0\\\Im x\geq 0\\\abs{x}\geq 1}}\frac{1}{\abs{x}^k}dx\\&\leq\frac{1}{k-1}+2\pi \frac 1{k-2}\\&\leq \frac{21}{k}
	\end{align*}
	where the last inequality follows from the fact that $k\geq 3$.
\end{proof}
 Given a complex number $x$ with $\abs{x}<1$, we define 
 \[
 \log(1-x)=-\sum_{i=1}^\infty\frac{x^i}{i}.
 \]
 \begin{lemma}\label{sigma}
 	Let $z\neq 0\in\C$ be such that the element of $\Lambda$ closest to $z$ is $0$. Suppose that $nz\in \Lambda$. Hence,
 	\[
 	\frac{0.14}{n}\leq \abs{\sigma(z)}\leq 4.04.
 	\]
 \end{lemma}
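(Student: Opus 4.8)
The plan is to exploit the symmetry $\omega\mapsto-\omega$ of the lattice $\Lambda=\Z[i]$ to regroup the Weierstrass product defining $\sigma(z)$: each individual Weierstrass factor is only $1+O(|\omega|^{-3})$, but the product of the $\omega$ and $-\omega$ factors is $1+O(|\omega|^{-4})$, and the resulting tail is then controlled by Lemma \ref{sum} with $k=4$. First I would record the two size bounds on $z$. Since $0$ is a closest lattice point to $z$, the point $z$ lies in the Voronoi cell of $0$, which for $\Z[i]$ is the square $[-\tfrac12,\tfrac12]^2$; hence $|z|\le\tfrac{\sqrt2}{2}$, so in particular $|z|^2\le\tfrac12$ and $|z|^4\le\tfrac14$. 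On the other hand $nz$ is a nonzero element of $\Z[i]$, so $|nz|\ge1$, i.e.\ $|z|\ge\tfrac1n$.

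Next, since the product $\prod_{\omega\in\Lambda\setminus0}(1-z/\omega)e^{z/\omega+\frac12(z/\omega)^2}$ converges absolutely (this is the purpose of the exponential factors; see \cite[Lemma VI.3.3]{arithmetic}), I may group the $\omega$ and $-\omega$ terms. Writing $w_\omega:=(z/\omega)^2$, each such pair contributes $(1-z/\omega)(1+z/\omega)e^{(z/\omega)^2}=(1-w_\omega)e^{w_\omega}$, so
\[
\sigma(z)=z\prod_{\{\omega,-\omega\}}(1-w_\omega)e^{w_\omega},
\]
the product running over the unordered pairs of nonzero lattice points. Since $|w_\omega|=|z|^2/|\omega|^2\le|z|^2\le\tfrac12<1$, I would estimate each factor using the elementary identity $\log\bigl|(1-w)e^{w}\bigr|=\Re\bigl(-\sum_{i\ge2}w^i/i\bigr)$, valid with the branch of $\log(1-w)$ fixed in the excerpt; this gives $\bigl|\log|(1-w)e^{w}|\bigr|\le\sum_{i\ge2}|w|^i/i\le\tfrac{|w|^2}{2(1-|w|)}\le|w|^2$ whenever $|w|\le\tfrac12$. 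Hence $e^{-|w_\omega|^2}\le|(1-w_\omega)e^{w_\omega}|\le e^{|w_\omega|^2}$ for every pair, and multiplying over all pairs,
\[
|z|\,e^{-S}\le|\sigma(z)|\le|z|\,e^{S},\qquad S:=\sum_{\{\omega,-\omega\}}|w_\omega|^2=\frac{|z|^4}{2}\sum_{\omega\in\Lambda\setminus0}\frac1{|\omega|^4}.
\]

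Finally I would bound $S$. Splitting $\Lambda\setminus0$ into $\Lambda^*$ and the twelve points with $0<|\omega|\le2$, the latter contribute $4\cdot1+4\cdot\tfrac14+4\cdot\tfrac1{16}=\tfrac{21}{4}$ to $\sum1/|\omega|^4$, while Lemma \ref{sum} with $k=4$ gives $\sum_{\omega\in\Lambda^*}1/|\omega|^4\le\tfrac{21}{4}$; hence $\sum_{\omega\in\Lambda\setminus0}1/|\omega|^4\le\tfrac{21}{2}$ and $S\le\tfrac{|z|^4}{2}\cdot\tfrac{21}{2}\le\tfrac{21}{16}$. The upper bound then follows from $|\sigma(z)|\le|z|e^{S}\le\tfrac{\sqrt2}{2}e^{21/16}<4.04$, and the lower bound from $|\sigma(z)|\ge|z|e^{-S}\ge\tfrac1n e^{-21/16}>\tfrac{0.14}{n}$. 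The only delicate point is organizing the pairing and justifying the regrouping of the product; once the factors are brought to the form $(1-w_\omega)e^{w_\omega}$ with $|w_\omega|\le\tfrac12$, the rest is just the single elementary inequality above together with the already-established Lemma \ref{sum}.
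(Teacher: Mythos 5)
Your proof is correct, and it takes a genuinely different (and cleaner) route than the paper. Both arguments rest on the same two geometric inputs, namely $1/n\le\abs{z}\le\sqrt{2}/2$ and the tail estimate of Lemma \ref{sum}, but they organize the Weierstrass product differently. The paper splits $\Lambda\setminus\{0\}$ into the twelve points $\Lambda_1$ with $\abs{\omega}\le 2$ and the rest $\Lambda^*$: on $\Lambda^*$ it bounds each factor via $-\sum_{i\ge3}z^i/(i\omega^i)$, applying Lemma \ref{sum} for every $k=i\ge 3$ to get $\abs{\log z_1}\le 1.56$, while on $\Lambda_1$ it uses $\sum 1/\omega=\sum 1/\omega^2=0$ to cancel the exponentials and then numerically computes the minimum $0.94$ and maximum $1.2$ of $\abs{\prod_{\Lambda_1}(z-\omega)/\omega}$ over the Voronoi square. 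You instead exploit the $\omega\mapsto-\omega$ symmetry once and for all: each pair collapses to $(1-w)e^{w}$ with $w=(z/\omega)^2$, $\abs{w}\le\abs{z}^2\le 1/2$, and the elementary bound $\bigl|\log\abs{(1-w)e^{w}}\bigr|\le\abs{w}^2$ reduces everything to a single application of Lemma \ref{sum} with $k=4$ plus the explicit sum $21/4$ over the twelve small points. This avoids the two-variable polynomial optimization entirely, needs only the $k=4$ case of Lemma \ref{sum}, and in fact yields sharper constants ($e^{-21/16}/n\approx 0.26/n$ below and $\tfrac{\sqrt2}{2}e^{21/16}\approx 2.63$ above, versus the paper's $0.19/n$ and $4.04$), which are more than enough for the stated bounds. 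The one point you rightly flag --- justifying the regrouping --- is covered by the absolute convergence of the Weierstrass product, so there is no gap.
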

\begin{proof}
	Observe that $z=(a+ib)/n$ with $a$ and $b$ two integers such that $\abs{a}\leq n/2$ and $\abs{b}\leq n/2$.
	
  We start by finding a bound for the product of $\sigma$ considering only the terms in $\Lambda^*$. Put \[z_1:=\prod_{\omega\in \Lambda^*}\Big(1-\frac z\omega\Big)e^{\frac z\omega+\frac 12(\frac z\omega)^2}\] and observe that
  \begin{align*}
  \log\Big[\Big(1-\frac z\omega\Big)e^{\frac z\omega+\frac 12(\frac z\omega)^2}\Big]&=\log\Big(1-\frac zw\Big)+\log\Big(e^{\frac z\omega+\frac 12(\frac z\omega)^2}\Big)\\&=-\Big(\sum_{i=1}^\infty \frac{z^i}{i\omega^i}\Big)+\frac z\omega+\frac 12\Big(\frac z\omega\Big)^2\\&=-\sum_{i=3}^\infty \frac{z^i}{i\omega^i}.
  \end{align*} Taking the logarithm,
\begin{align*}
\abs{\log z_1}=\abs{\log \prod_{\omega\in\Lambda^*}(1-\frac{z}{\omega})
	e^{\frac{z}{\omega}+\frac{z^2}{2\omega^2}}}=
\abs{\sum_{i=3}^\infty\sum_{\omega\in \Lambda^*}\frac{z^i}{i\omega^i}}
\leq\sum_{i=3}^\infty\sum_{\omega\in \Lambda^*}\abs{\frac{z^i}{i\omega^i}}.
\end{align*}
Since the element of $\Lambda$ closest to $z$ is $0$, then $\abs{z}\leq (2)^{-1/2}$ and therefore, using Lemma \ref{sum},
\begin{align*}
\sum_{i=3}^\infty\sum_{\omega\in \Lambda^*}\abs{\frac{z^i}{i\omega^i}}&\leq\sum_{i=3}^\infty\sum_{\omega\in \Lambda^*}\frac{(\sqrt{2})^{-i}}{i\abs{\omega}^i}\\&=
\sum_{i=3}^\infty\sum_{a^2+b^2\geq 5}\frac{(\sqrt{2})^{-i}}{i\abs{a^2+b^2}^{i/2}}\\&=\sum_{i=3}^\infty\frac 1{(\sqrt{2})^ii}\sum_{a^2+b^2\geq 5}\frac{1}{\abs{a^2+b^2}^{i/2}}\\&\leq\sum_{i=3}^\infty\frac{21}{2^{i/2}i^2} \\&\leq \frac{21}{2^{3/2}3^2}+\frac{21}{2^{4/2}4^2}+\frac{21}{2^{5/2}5^2}+\frac{21}{36\sqrt{2}^6}\sum_{i=0}^\infty\Big(\frac{1}{\sqrt{2}}\Big)^i\\&= \frac{21}{2^{3/2}3^2}+\frac{21}{2^{4/2}4^2}+\frac{21}{2^{5/2}5^2}+\frac{21}{36\sqrt{2}^6}\frac{\sqrt{2}}{\sqrt{2}-1}
\\&\leq 1.56
\end{align*}
and then
\[
\abs{\Re(\log z_1)}\leq \abs{\log z_1}\leq 1.56.
\]
Therefore,
\[
e^{-1.56}\leq e^{-\abs{\Re(\log z_1)}}\leq\abs{e^{\log z_1}}=\abs{z_1}\leq e^{\abs{\Re(\log z_1)}}\leq e^{1.56}
\]
since, for $x\in \C$,
\[
e^{\Re(x)}=\abs{e^x}.
\]
Let $\Lambda_1=\Lambda \setminus \Lambda^*\setminus \{0\}$. This is a set of $12$ complex numbers. Then, by direct computation,
\[
\sum_{\omega\in \Lambda_1}\frac{1}{\omega}=0,
\] 
\[
\sum_{\omega\in \Lambda_1}\frac{1}{\omega^2}=0
\]
and
\[
\prod_{\omega\in \Lambda_1}\frac{1}{\omega}=\frac{1}{64}.
\] 
Now we need to deal with $\prod_{\omega\in\Lambda_1}(z-\omega).$ Recalling that $0$ is the element of $\Lambda$ closest to $z$, then $-0.5\leq \Re z\leq 0.5$ and $ -0.5\leq \Im z\leq 0.5$. We have
\[
\min_{\substack{-0.5\leq \Re z\leq 0.5 \\ -0.5\leq \Im z\leq 0.5}} \abs{\prod_{\omega\in\Lambda_1}\frac{(z-\omega)}{\omega}}\geq 0.94.
\]
This follows from the calculation of the minimum of the absolute value of the polynomial, that can be seen as a real polynomial in two variables, writing $z$ as $x+iy$.
In the same way
\[
\max_{\substack{-0.5\leq \Re z\leq 0.5 \\ -0.5\leq \Im z\leq 0.5}}\abs{\prod_{\omega\in\Lambda_1}\frac{(z-\omega)}{\omega}}\leq 1.2.
\]
Observe that 
\[
\frac 1n\leq \abs{z}\leq \frac{1}{\sqrt{2}}
\]
since every $n$-torsion point of $\C/\Lambda$ is in the form $(a+ib)/n$ and $z\neq 0$. Moreover,
\begin{align*}
\sigma(z)&=z\cdot z_1\cdot\prod_{\omega\in\Lambda_1}\Big(\frac{\omega-z}{\omega}\Big)\cdot e^{\sum_{\omega\in\Lambda_1}\frac z\omega}\cdot e^{\sum_{\omega\in\Lambda_1}\frac {z^2}{2\omega^2}}\\&
=z\cdot z_1\cdot \prod_{\omega\in\Lambda_1}\Big(\frac{\omega-z}{\omega}\Big).
\end{align*}
Using all the inequalities before, we conclude
\[
\frac{0.19}{n}\leq\frac 1n\cdot  e^{-1.56} \cdot 0.94 \leq \abs{z\cdot z_1\cdot \prod_{\omega\in\Lambda_1}\Big(\frac{\omega-z}{\omega}\Big)}=\abs{\sigma(z)}
\]
and 
\[
\abs{\sigma(z)}=\abs{z\cdot z_1\cdot \prod_{\omega\in\Lambda_1}\Big(\frac{\omega-z}{\omega}\Big)}\leq \frac{1}{\sqrt{2}}\cdot e^{1.56}\cdot1.2\leq 4.04.
\]
\end{proof}
Now, we need to bound $\abs{\sigma(z)}$ for $z$ so that $nz\in \Lambda$ but with $z$ in a larger region of the complex plane compared to the previous lemma.
\begin{lemma}\label{sig1}
	Let $z=(a+ib)/n\notin\Lambda$ with $a$ and $b$ two integers such that $\abs{a}\leq n$ and $\abs{b}\leq n$. So,
	\[
	\abs{\sigma(z)}\geq \frac{1}{2.1\cdot 10^{16}\cdot n}.
	\]
\end{lemma}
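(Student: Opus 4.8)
The plan is to reduce everything to Lemma \ref{sigma} by exploiting the quasi-periodicity of the Weierstrass $\sigma$-function: Lemma \ref{sigma} controls $\abs{\sigma(z)}$ when $z$ lies in the fundamental region around $0$, and the present statement asks for a lower bound on a square which is covered by the nine translates of that fundamental region by the lattice points $c+id$ with $c,d\in\{-1,0,1\}$. Recall (see \cite[Chapter VI]{arithmetic}) that there is an additive homomorphism $\eta\colon\Lambda\to\C$ with $\zeta(z+\omega)-\zeta(z)=\eta(\omega)$, and that accordingly
\[
\abs{\sigma(z+\omega)}=e^{\,\Re\!\left(\eta(\omega)\left(z+\frac{\omega}{2}\right)\right)}\abs{\sigma(z)}\qquad\text{for all }\omega\in\Lambda,\ z\in\C.
\]

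First I would reduce $z$ modulo $\Lambda$. Since $\abs{\Re z}\le 1$ and $\abs{\Im z}\le 1$, pick a lattice point $\omega_0=c+id$ closest to $z$ with $c,d\in\{-1,0,1\}$, so $\abs{\omega_0}\le\sqrt2$, and set $z'=z-\omega_0=\big((a-nc)+i(b-nd)\big)/n$. Then $nz'\in\Lambda$, $z'\neq 0$ (as $z\notin\Lambda$), and $0$ is a lattice point closest to $z'$ (indeed $\abs{\Re z'},\abs{\Im z'}\le\tfrac12$), so $z'$ meets the hypotheses of Lemma \ref{sigma}; hence $\abs{\sigma(z')}\ge 0.14/n$ and $\abs{z'}\le 1/\sqrt2$. (When $\omega_0=0$ there is nothing to do, this is just Lemma \ref{sigma}.) Applying the translation formula with $\omega=\omega_0$ and estimating crudely,
\[
\abs{\sigma(z)}\;\ge\;e^{-\abs{\eta(\omega_0)}\,\abs{z'+\omega_0/2}}\,\abs{\sigma(z')}\;\ge\;e^{-\sqrt2\,\abs{\eta(\omega_0)}}\cdot\frac{0.14}{n},
\]
because $\abs{z'+\omega_0/2}\le\abs{z'}+\abs{\omega_0}/2\le\sqrt2$.

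It remains to bound $\abs{\eta(\omega_0)}$, and since $\eta$ is additive one only needs $\eta(1)$ and $\eta(i)$. Here one can be explicit on $\Lambda=\Z[i]$: the complex multiplication $z\mapsto iz$ fixes $\Lambda$ and gives $\wp(iz)=-\wp(z)$, hence $\zeta(iz)=-i\zeta(z)$ and so $\eta(i)=-i\,\eta(1)$; combined with the Legendre relation $\eta(1)\cdot i-\eta(i)\cdot 1=\pm2\pi i$ this forces $\abs{\eta(1)}=\abs{\eta(i)}=\pi$. Therefore $\abs{\eta(\omega_0)}\le\pi\big(\abs{c}+\abs{d}\big)\le 2\pi$, and the displayed inequality becomes $\abs{\sigma(z)}\ge e^{-2\sqrt2\pi}\cdot 0.14/n$, which is far larger than $1/(2.1\cdot 10^{16}n)$, proving the lemma.

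The only delicate ingredient is the evaluation of the quasi-periods $\eta(1),\eta(i)$; everything else is elementary bookkeeping about nearest lattice points. If one would rather not invoke the Legendre relation, it is enough to bound $\abs{\zeta(\tfrac12)}$ and $\abs{\zeta(\tfrac i2)}$ from above directly from the defining series, splitting it (as in the proof of Lemma \ref{sigma}) into the finite sum over $\Lambda_1=(\Lambda\setminus\Lambda^*)\setminus\{0\}$ plus a geometrically convergent tail over $\Lambda^*$; the enormous slack in the constant makes any such crude bound suffice. I would not attempt a head-on estimate of the infinite product defining $\sigma(z)$ for $\abs{z}$ up to $\sqrt2$, since there the term-by-term bound of Lemma \ref{sum} on $\sum_{\omega\in\Lambda^*}\abs{\omega}^{-i}$ no longer beats $\abs{z}^i$ and one would be forced to use the true geometric decay of that sum instead, which is exactly what the quasi-periodicity trick lets us sidestep.
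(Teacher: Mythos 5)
Your proposal is correct and follows essentially the same route as the paper: translate $z$ by a nearest lattice point into the fundamental domain, apply Lemma \ref{sigma} there, and transport the bound back via the quasi-periodicity $\sigma(z+\omega)=\pm\sigma(z)e^{\eta(\omega)(z+\omega/2)}$ together with a bound on $\abs{\eta(1)},\abs{\eta(i)}$. The only divergence is that you evaluate the quasi-periods analytically ($\abs{\eta(1)}=\abs{\eta(i)}=\pi$ via the CM symmetry and the Legendre relation) where the paper takes numerical upper bounds from PARI/GP's \texttt{elleta}; your version gives a sharper constant but the argument is structurally identical.
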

\begin{proof}
	Let $\omega\in \Lambda$ be such that the element of $\Lambda$ closest to $z+\omega$ is $0$. The element $\omega$ can be written in the form $x+iy$ for $x$ and $y$ two integers so that $-1\leq x,y \leq 1$. Thanks to \cite[Exercise 6.4.e]{arithmetic},
	\[
	\sigma(z)=\pm \sigma(z+\omega)e^{-\eta(\omega)(z+\omega/2)}=\pm \sigma(z+\omega)e^{-z\eta(\omega)}e^{-\frac{\omega\eta(\omega)}{2}},
	\] 
	where $\eta(\omega)$ defined in \cite[Exercise 6.4.b]{arithmetic}. Since we are interested in the absolute value, the sign is not important. The function $\eta$ is linear (see \cite[Exercise 6.4.c]{arithmetic}) and then
	\[
	\abs{\eta(\omega)}\leq \abs{\eta(1)}+\abs{\eta(I)}.
	\]
	Using the command "elleta" of PARI/GP, it is possible to compute the value of eta, and we have $\abs{\eta(1)}\leq 3.142$ and $\abs{\eta(I)}\leq 9.426$. Recalling that $\abs{z}\leq \sqrt{2}$, we have
	\[
	\abs{-z\eta(\omega)}\leq \sqrt{2}(3.142+9.426)\leq 17.8
	\]
	and, in the same way,
	\[
	\abs{-\omega\eta(\omega)/2}\leq 17.8.
	\]
	So, using Lemma \ref{sigma},
	\[
	\abs{\sigma(z)}=\abs{\sigma(z+\omega)}{\abs{e^{-z\eta(\omega)}e^{-\omega\eta(\omega)}}}\geq \frac{0.14}{n}e^{-\abs{\omega\eta(\omega)}-\abs{z\eta(\omega)}}\geq \frac{1}{2.1\cdot 10^{16}\cdot n}
	\]
	since
	\[
	0.14\cdot e^{-17.8}\cdot e^{-17.8}\geq \frac{1}{2.1\cdot 10^{16}}.
	\]
\end{proof}
\begin{lemma}\label{n2}
	Let $P$ be a non-trivial $n$-torsion point in $E(\overline{\Q})$, where $E$ is defined by $y^2=x^3-15G_4x$. Then,
	\[
	\abs{x(P)}\leq n^2+53.
	\]
\end{lemma}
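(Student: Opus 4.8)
I would approach Lemma \ref{n2} through the complex uniformization recalled above. Let $P$ be a non-trivial $n$-torsion point, where $E(\C)\cong\C/\Lambda$ with $\Lambda=\Z+\Z i$. Since $\wp$ is $\Lambda$-periodic, we may write $P=\varphi(z)$ with a representative $z$ satisfying $\abs{\Re z}\le\tfrac12$ and $\abs{\Im z}\le\tfrac12$; as $nz\in\Lambda$ this forces $z=(a+ib)/n$ with $a,b\in\Z$, $\abs a\le n/2$, $\abs b\le n/2$ and $(a,b)\ne(0,0)$, so that $\abs z\le 1/\sqrt2$ and $x(P)=\wp(z)$. Splitting the Weierstrass series as
\[
\wp(z)=\frac1{z^2}+\sum_{\omega\in\Lambda_1}\Big(\frac1{(z-\omega)^2}-\frac1{\omega^2}\Big)+\sum_{\omega\in\Lambda^*}\Big(\frac1{(z-\omega)^2}-\frac1{\omega^2}\Big),
\]
where $\Lambda_1=\Lambda\setminus\Lambda^*\setminus\{0\}$ is the set of the $12$ lattice points with $0<\abs\omega\le 2$, the leading term is controlled by $\abs{1/z^2}=n^2/(a^2+b^2)\le n^2$ because $a+ib$ is a non-zero Gaussian integer. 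It therefore suffices to bound the two remaining sums by an absolute constant $\le 53$.

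For the tail over $\Lambda^*$ the plan is to exploit the cancellation between the two fractions via the identity
\[
\frac1{(z-\omega)^2}-\frac1{\omega^2}=\frac{z(2\omega-z)}{\omega^2(z-\omega)^2}.
\]
If $\omega\in\Lambda^*$ then $\abs\omega\ge\sqrt5$, hence $1/\sqrt2\le\abs\omega/\sqrt{10}$, and therefore $\abs{z-\omega}\ge\abs\omega(1-1/\sqrt{10})$ and $\abs{2\omega-z}\le\abs\omega(2+1/\sqrt{10})$. Substituting, each summand is at most $C_0\abs\omega^{-3}$ with $C_0=\frac{2+1/\sqrt{10}}{\sqrt2\,(1-1/\sqrt{10})^2}<3.51$, and Lemma \ref{sum} with $k=3$ gives $\sum_{\omega\in\Lambda^*}\abs\omega^{-3}\le 7$. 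Hence the tail contributes less than $7\cdot 3.51<25$.

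For the part over $\Lambda_1$ I would use the triangle inequality to write it as $\sum_{\omega\in\Lambda_1}\abs{z-\omega}^{-2}+\sum_{\omega\in\Lambda_1}\abs\omega^{-2}$, where the second sum equals $4\cdot1+4\cdot\tfrac12+4\cdot\tfrac14=7$. The first sum is a smooth function of $(\Re z,\Im z)$ on the square $[-\tfrac12,\tfrac12]^2$, invariant under the symmetry group of that square, and a direct computation — of exactly the kind used repeatedly in this paper to minimise or maximise an explicit real function of two variables — shows that its maximum there is attained at an edge midpoint such as $z=\tfrac12$ and is smaller than $10$. Consequently the $\Lambda_1$ part is at most $17$, and combining everything
\[
\abs{x(P)}=\abs{\wp(z)}\le n^2+25+17\le n^2+53 .
\]

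The only genuinely delicate point is the last one: a purely termwise estimate using only $\abs{z-\omega}\ge\tfrac12$ for $\omega\in\Lambda_1$ loses far too much (it would give a bound of order $12\cdot5$ for that block), so one really needs the observation that $z$ cannot be close to several of the $12$ nearby lattice points simultaneously, which converts the estimate into a finite maximisation on the unit square. Everything else — the bound $\abs{1/z^2}\le n^2$, the cancellation identity together with Lemma \ref{sum} for the tail, and the evaluation $\sum_{\omega\in\Lambda_1}\abs\omega^{-2}=7$ — is routine, and the constant $53$ leaves ample slack.
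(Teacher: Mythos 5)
Your argument is correct and follows essentially the same route as the paper: the same representative $z=(a+ib)/n$ with $\abs{1/z^2}\le n^2$, the same decomposition of $\wp(z)$ into the leading term, the twelve nearby lattice points $\Lambda_1$, and the tail over $\Lambda^*$ controlled via the identity $\frac{1}{(z-\omega)^2}-\frac{1}{\omega^2}=\frac{z(2\omega-z)}{\omega^2(z-\omega)^2}$ together with Lemma \ref{sum}. The only difference is how the slack in the constant $53$ is apportioned: the paper bounds the tail by $50$ using the cruder $\abs{z-\omega}\ge\abs{\omega}/2$ and then exploits the exact cancellation $\sum_{\omega\in\Lambda_1}\omega^{-2}=0$ to charge only $3$ to the near block, whereas you tighten the tail to $25$ and spend $17$ on the near block via the triangle inequality — both versions reduce to the same kind of finite maximization over the square $[-\tfrac12,\tfrac12]^2$ and both land comfortably under $n^2+53$.
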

\begin{proof}
	We want to bound 
	\[
	\wp(z)=\frac{1}{z^2}+\sum_{\omega\in \Lambda\setminus 0}\frac{1}{(z-\omega)^2}-\frac{1}{\omega^2}
	\]
	for $z=(a+ib)/n\neq 0$ with $a$ and $b$ two integers such that $\abs{a}\leq n/2$ and $\abs{b}\leq n/2$. 
	 Using the previous notation, we observe that $\abs{\omega}>2\abs{z}$ for $\omega\in\Lambda^*$. So,
	 \[
	 \abs{z-2\omega}\leq 2\abs{\omega}+\abs{z}\leq 2\abs{\omega}+\frac{\abs{\omega}}{2}=\frac 52\abs{\omega}
	 \] 
	 and
	 \[
	 \abs{z-\omega}\geq  \abs{\omega}-\abs{z}\geq \abs{\omega}-\frac{\abs{\omega}}{2}=\abs{\frac{\omega}{2}}.
	 \]
	 Observe that $\abs{z}\leq \sqrt{2}/2$ and using Lemma \ref{sum} we have 
	\begin{align*}
	\abs{\sum_{\omega\in \Lambda^*}\frac{1}{(z-\omega)^2}-\frac{1}{\omega^2}}&= \abs{\sum_{\omega\in \Lambda^*}\frac{z(z-2\omega)}{\omega^2(z-\omega)^2}} \\&\leq\abs{z}\sum_{\omega\in\Lambda^*}\frac{\frac 52\abs{\omega}}{\frac{\abs{\omega}^4}{4}}\\&\leq 5\sqrt{2}\sum_{\omega\in \Lambda^*}\frac{1}{\abs{\omega}^3}\\&\leq 5\sqrt{2}\frac{21}{3}\\&\leq 50.
	\end{align*}
	Furthermore,
	\[
	\sum_{\omega\in \Lambda_1}\frac{1}{\omega^2}=0
	\]
	and 
	\[
	\abs{\max_{\substack{-0.5\leq \Re z\leq 0.5 \\ -0.5\leq \Im z\leq 0.5}}\sum_{\omega\in \Lambda_1}\frac{1}{(z-\omega)^2}}\leq 3
	\]
	by direct computation.
	Then, using $\abs{z}\geq 1/n$ since $z\neq 0$, we obtain
	\begin{align*}
	\abs{\wp(z)}&\leq \abs{\frac 1{z^2}}+\abs{\sum_{\omega\in \Lambda^*}\frac{1}{(z-\omega)^2}-\frac{1}{\omega^2}}+\abs{\sum_{\omega\in \Lambda_1}\frac{1}{(z-\omega)^2}}+\abs{\sum_{\omega\in \Lambda_1}\frac{1}{\omega^2}}\\&\leq n^2+50+3.
	\end{align*}
	We conclude by observing that every $n$-torsion point in $\C/\Lambda$ can be written as $z=a+ib$ with $\abs{a}\leq n/2$ and $\abs{b}\leq n/2$. So, every non-trivial $n$-torsion point $P$ of $E(\C)$ is equal to $\varphi(z)$ for $z=(a+ib)/n\neq 0$ with $\abs{a}\leq n/2$ and $\abs{b}\leq n/2$. In conclusion,
	\[
	\abs{x(P)}=\abs{\wp(z)}\leq n^2+53.
	\]
\end{proof}
\begin{lemma}\label{G4}
	Let $G_4$ be defined as before. Hence, \[1\leq \abs{15G_4}\leq 128.\]
\end{lemma}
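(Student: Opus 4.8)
I would prove Lemma~\ref{G4} by pulling out the contribution of the short vectors in $G_4=\sum_{\omega\in\Lambda\setminus\{0\}}\omega^{-4}$ (with $\Lambda=\Z+\Z i$) and computing it exactly, bounding what remains by the integral estimate already used in the proof of Lemma~\ref{sum}. Write $\Lambda_1$ for the twelve vectors with $0<\abs{\omega}\le 2$ (as in the proof of Lemma~\ref{sigma}), $T$ for the eight vectors of norm $5$, and $\Lambda'$ for the vectors of norm at least $8$; since no Gaussian integer has norm $6$ or $7$, we get $\Lambda\setminus\{0\}=\Lambda_1\sqcup T\sqcup\Lambda'$ and $\Lambda^*=T\sqcup\Lambda'$.

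First I would evaluate the main part. The four units contribute $4$; the vectors $\pm(1\pm i)$ have fourth power $(\pm 2i)^2=-4$, hence contribute $-1$; and $\pm 2,\pm 2i$ contribute $\tfrac14$, so $\sum_{\omega\in\Lambda_1}\omega^{-4}=\tfrac{13}{4}$. For $T$, the identities $(1+2i)^4=(2-i)^4=-7-24i$ and $(1-2i)^4=(2+i)^4=-7+24i$ give $\sum_{\omega\in T}\omega^{-4}=\tfrac{2}{625}\bigl((-7+24i)+(-7-24i)+(-7-24i)+(-7+24i)\bigr)=-\tfrac{56}{625}$. Thus $\sum_{\omega\in\Lambda_1\cup T}\omega^{-4}=\tfrac{13}{4}-\tfrac{56}{625}=\tfrac{7901}{2500}\in(3.16,3.17)$; in particular $G_4$ is real and positive, so $\abs{15G_4}=15G_4$.

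Next I would bound the tail $\sum_{\omega\in\Lambda'}\omega^{-4}$. The proof of Lemma~\ref{sum} establishes $\sum_{\omega\in\Lambda^*}\abs{\omega}^{-k}\le\tfrac{4}{2^{k-1}(k-1)}+\tfrac{2\pi}{k-2}$ (this is the bound appearing there before the concluding simplification to $21/k$); for $k=4$ it reads $\sum_{\omega\in\Lambda^*}\abs{\omega}^{-4}\le\tfrac16+\pi$. Subtracting the norm-$5$ part $\sum_{\omega\in T}\abs{\omega}^{-4}=8\cdot5^{-2}=\tfrac8{25}$ gives $\bigl\lvert\sum_{\omega\in\Lambda'}\omega^{-4}\bigr\rvert\le\sum_{\omega\in\Lambda'}\abs{\omega}^{-4}\le\tfrac16+\pi-\tfrac8{25}<3$. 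Hence $G_4=\tfrac{7901}{2500}+\theta$ with $\abs{\theta}<3$, so $G_4\in(0.16,6.15)$ and therefore $15G_4\in(2,93)\subset[1,128]$, as claimed.

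The delicate point is the lower bound: $\tfrac{13}{4}$ only just exceeds the tail bound, and Lemma~\ref{sum} used verbatim would only give $\sum_{\omega\in\Lambda^*}\abs{\omega}^{-4}\le\tfrac{21}{4}$, making $\tfrac{13}{4}-\tfrac{21}{4}<0$ useless, so one must both retain the sharper form of the tail estimate and peel off the norm-$5$ shell in addition to $\Lambda_1$. If one accepts a numerical computation instead, it suffices to observe that $15G_4=g_2(\Z+\Z i)/4\approx 47.3$ (computable in PARI/GP), which gives the bound immediately; but the argument above is self-contained.
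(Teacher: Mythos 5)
Your proof is correct, and its skeleton is the same as the paper's: compute the contribution of the short vectors exactly (the paper also records $\sum_{\omega\in\Lambda_1}\omega^{-4}=13/4$) and control the rest with the integral estimate from the proof of Lemma~\ref{sum}. The two arguments diverge only at the point you correctly identify as delicate, the lower bound. The paper's fix is to keep the on-axis terms $\sum_{a^2\geq 5}a^{-4}+\sum_{b^2\geq 5}(ib)^{-4}$ grouped with $\Lambda_1$ — these are positive reals, so the ``main part'' is still at least $13/4$ — and then only the off-axis tail needs to be absorbed, which the integral bounds by $2\pi/(k-2)=\pi$, giving $13/4-\pi\geq 0.1$. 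Your fix instead peels off the norm-$5$ shell and evaluates it exactly as $-56/625$, then bounds the entire remaining tail (axis and off-axis alike, norm $\geq 8$) by $\tfrac16+\pi-\tfrac8{25}<3$, giving $7901/2500-2.988>0.16$. Both squeeze past the same obstruction with comparable margins; yours costs one extra exact shell computation but avoids the sign observation about the axis terms, and it incidentally yields a sharper upper bound ($15G_4<93$ versus the paper's $127.5$). One cosmetic remark: you assert that $G_4$ is ``real and positive'' immediately after computing the main part — realness follows from conjugation symmetry of $\Lambda$, but positivity is only established once the tail bound is in place, so that clause should be deferred (or the modulus kept until the end, as the paper does).
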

\begin{proof}
 By direct computation,
\begin{equation}\label{134}
\sum_{\omega\in \Lambda_1}\omega^{-4}=\frac{13}4.
\end{equation}Thus, using Lemma \ref{sum},
\begin{align*}
\abs{15G_4}&\leq 15\abs{\sum_{\omega\in \Lambda_1}\omega^{-4}}+ 15\sum_{a^2+b^2\geq 5}\frac{1}{(a^2+b^2)^2} \\&\leq 15\cdot\frac{13}{4}+ 15\cdot\frac{21}{4}\\&\leq 128.
\end{align*}
Now, we focus on the lower bound. Observe that, using (\ref{134}),
\[
\sum_{\omega\in \Lambda_1}\omega^{-4}+\sum_{b^2\geq 5}\frac{1}{(ib)^4}+\sum_{a^2\geq 5}\frac{1}{(a)^4}=\sum_{\omega\in \Lambda_1}\omega^{-4}+4\sum_{b\geq 3}\frac{1}{b^4}\geq\sum_{\omega\in \Lambda_1}\omega^{-4}= \frac{13}4
\]
and
\[
\sum_{\omega\in \Lambda}\omega^{-4}=\Big(\sum_{\omega\in \Lambda_1}\omega^{-4}+\sum_{b^2\geq 5}\frac{1}{(ib)^4}+\sum_{a^2\geq 5}\frac{1}{(a)^4}\Big)+4\Big(\sum_{\substack{a,b>0\\a^2+b^2\geq 5}}\frac{1}{(a^2+b^2)^2}\Big).
\]
As we showed in the proof of Lemma \ref{sum},
\[
4\abs{\sum_{\substack{a,b>0\\a^2+b^2\geq 5}}\frac{1}{(a^2+b^2)^2}}\leq \frac{2\pi}{4-2}=\pi.
\]
So,
\begin{align*}
\abs{\sum_{\omega\in \Lambda}\omega^{-4}}&\geq\abs{\Big(\sum_{\omega\in \Lambda_1}\omega^{-4}+\sum_{b^2\geq 5}\frac{1}{(ib)^4}+\sum_{a^2\geq 5}\frac{1}{(a)^4}\Big)}-4\abs{\sum_{\substack{a,b>0\\a^2+b^2\geq 5}}\frac{1}{(a^2+b^2)^2}}\\&\geq \frac{13}{4}-\pi\\&\geq 0.1.
\end{align*}
Finally,
\[
\abs{15G_4}=15\abs{\sum_{\omega\in \Lambda}\omega^{-4}}\geq 1.
\]
\end{proof}
\begin{lemma}\label{Kk}
	 Let $n\geq 2$ and $K_n$ be as in (\ref{eqKk}). So, we have
	\[
	K_n\geq \frac{1}{2.5\cdot 10^{36}\cdot n^6}.
	\]
\end{lemma}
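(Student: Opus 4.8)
The plan is to transfer the quantity $K_n$, which is defined on $E_1\colon y^2=x^3+x$, to the uniformized model $E\colon y^2=x^3-15G_4x$ studied above, and then to estimate the relevant differences of abscissas of torsion points through the Weierstrass $\sigma$-function. First I would use the complex isomorphism $\phi\colon E_1\to E$ of the form $\phi(x,y)=(u^2x,u^3y)$ with $u^4=-15G_4$: it is a group isomorphism carrying $E_1(\overline\Q)[n]$ bijectively onto $E(\overline\Q)[n]$, it preserves the relation $T_1\neq\pm T_2$, and it multiplies $x$-coordinates by $u^2$. Hence $\abs{x(T_1)-x(T_2)}=\abs{15G_4}^{-1/2}\abs{x(\phi T_1)-x(\phi T_2)}$ and $\abs{x(T_1)^{-1}-x(T_2)^{-1}}=\abs{15G_4}^{1/2}\abs{x(\phi T_1)^{-1}-x(\phi T_2)^{-1}}$, and by Lemma~\ref{G4} the factor $\abs{15G_4}^{\pm1/2}$ lies between $128^{-1/2}$ and $128^{1/2}$. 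So it suffices to bound from below $\abs{x(T_1)-x(T_2)}$ and $\abs{x(T_1)^{-1}-x(T_2)^{-1}}$ for non-trivial $n$-torsion points $T_1\neq\pm T_2$ of $E$, absorbing this bounded scaling factor into the final constant.

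Next I would write $T_i=\varphi(z_i)$ with $z_i=(a_i+ib_i)/n$ and $\abs{a_i},\abs{b_i}\le n/2$, so that $z_i\neq 0$ and $0$ is the lattice point closest to $z_i$, and apply the product formula $x(T_1)-x(T_2)=-\sigma(z_1+z_2)\sigma(z_1-z_2)/(\sigma(z_1)^2\sigma(z_2)^2)$ from \cite[Exercise~6.3]{arithmetic}. Lemma~\ref{sigma} bounds the denominator above by $4.04^4$. Since $T_1\neq\pm T_2$, the numbers $z_1\pm z_2$ lie outside $\Lambda$ and are of the form $(c+id)/n$ with $\abs{c},\abs{d}\le n$, so Lemma~\ref{sig1} gives $\abs{\sigma(z_1\pm z_2)}\ge(2.1\cdot10^{16}\,n)^{-1}$; therefore $\abs{x(T_1)-x(T_2)}\ge\big((2.1\cdot10^{16})^2\,4.04^4\,n^2\big)^{-1}$. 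For the reciprocal difference I would use $x(T_1)^{-1}-x(T_2)^{-1}=(x(T_2)-x(T_1))/(x(T_1)x(T_2))$ together with the bound $\abs{x(T_i)}\le n^2+53$ of Lemma~\ref{n2}, which multiplies the previous estimate by $(n^2+53)^{-2}$.

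Putting the two estimates and the reduction step together, for $n\ge2$ the reciprocal bound is the smaller of the two, so
\[
K_n\ge\frac{1}{(2.1\cdot10^{16})^2\,4.04^4\,(n^2+53)^2\,n^2}.
\]
For $n\ge4$ one has $(n^2+53)^2\le(69/16)^2\,n^4$, and a short arithmetic computation turns the right-hand side into a quantity that is at least $(2.5\cdot10^{36}\,n^6)^{-1}$; the two remaining cases $n=2$ and $n=3$ are checked directly by listing the abscissas of the $2$- and $3$-torsion points of $E_1$, where $K_n$ is an explicit positive algebraic number far larger than the claimed bound. The delicate points are pinning down the $\C$-isomorphism between $E_1$ and the uniformized curve so that Lemma~\ref{G4} supplies exactly the scaling factor, verifying that the region hypotheses of Lemmas~\ref{sigma}, \ref{sig1} and \ref{n2} apply to the arguments $z_i$ and $z_1\pm z_2$ respectively, and propagating the explicit constants without too much slack-loss; I expect the bookkeeping of constants, rather than any conceptual step, to be the main obstacle.
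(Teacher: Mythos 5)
Your proposal is correct and uses exactly the machinery of the paper's own proof: the $\sigma$-function product formula for $\wp(z_1)-\wp(z_2)$, the upper bound $\abs{\sigma(z_i)}\leq 4.04$ from Lemma \ref{sigma}, the lower bound on $\abs{\sigma(z_1\pm z_2)}$ from Lemma \ref{sig1}, the coordinate bound of Lemma \ref{n2}, and the scaling between $E_1$ and the uniformized curve via Lemma \ref{G4}. The one genuine divergence is in the endgame, and it matters. The paper transfers the plain difference $\abs{x(T_1)-x(T_2)}$ to $E_1$ \emph{first} (paying the unfavorable factor $\abs{15G_4}^{-1/2}\geq 128^{-1/2}$) and only then divides by $\abs{x(T_1)x(T_2)}\leq (n^2+53)^2$; this yields $K_n\geq \bigl(1.4\cdot 10^{36}\,n^2(n^2+53)^2\bigr)^{-1}$, which beats $\bigl(2.5\cdot 10^{36}n^6\bigr)^{-1}$ only for $n\geq 13$, forcing the paper to verify $2\leq n\leq 13$ by computing $K_n$ with PARI/GP. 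You instead form the reciprocal difference on the uniformized curve and transfer afterwards, picking up the \emph{favorable} factor $\abs{15G_4}^{1/2}\geq 1$; your constant $\bigl((2.1\cdot 10^{16})^2\cdot 4.04^4\bigr)\approx 1.18\cdot 10^{35}$ then makes the threshold $n\geq 4$ work (indeed $1.18\cdot 10^{35}\cdot(69/16)^2<2.5\cdot 10^{36}$), leaving only $n=2,3$ to check by hand — a genuinely cleaner finish with fewer computer verifications. Be aware, though, that this hinges on using the one-sided bound $\abs{15G_4}\geq 1$ from Lemma \ref{G4} in the reciprocal case: your phrasing that the scaling factor ``lies between $128^{-1/2}$ and $128^{1/2}$'' would, if applied in the pessimistic direction, reinstate a factor $\sqrt{128}$ in your displayed constant and push the threshold back to $n\geq 12$, leaving $4\leq n\leq 11$ unverified. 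As written, your displayed inequality for $K_n$ is correct precisely because the plain-difference branch is dominated by the reciprocal branch for all $n\geq 2$ and the reciprocal branch only needs $\abs{15G_4}^{1/2}\geq 1$; you should make that explicit rather than absorbing it silently.
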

\begin{remark}
	This bound is not sharp. Computational evidence shows that we could do much better. Indeed, using the command "polroots" of PARI/GP we can effectively compute $K_n$ for $n$ small. It seems that we could take $K_n\geq 6/n^2$. Anyway, for our goal, the bound is good enough.
\end{remark}
\begin{proof}
Using \cite[Exercise 6.3]{arithmetic}, given $z_1$ and $z_2$ two points in $\C/\Lambda$, we have
\[
\wp(z_1)-\wp(z_2)=\frac{\sigma(z_1+z_2)\sigma(z_2-z_1)}{\sigma(z_1)^2\sigma(z_2)^2}.
\]
Let now $z_1$ and $z_2$ be two non-zero complex numbers such that the element of $\Lambda$ closest to $z_1$ and $z_2$ is $0$ and such that $nz_1$ and $nz_2$ belong to $\Lambda$. Hence, $z_1+z_2$ and $z_1-z_2$ satisfy the hypothesis of Lemma \ref{sig1}, if $z_1\neq \pm z_2$. So, using Lemma \ref{sigma},
\[
\abs{\sigma(z_i)}\leq 4.04
\]
for $i=1,2$ and using Lemma \ref{sig1}
\[
\abs{\sigma(z_1\pm z_2)}\geq \frac{1}{2.1\cdot 10^{16}\cdot n}.
\] Therefore, 
\begin{align*}
\abs{\wp(z_1)-\wp(z_2)}&=\abs{\frac{\sigma(z_1+z_2)\sigma(z_2-z_1)}{\sigma(z_1)^2\sigma(z_2)^2}}\\&\geq\frac{1}{(2.1)^2\cdot(10)^{32}\cdot(4.04)^4n^2}\\&\geq\frac{1}{1.18\cdot10^{35}\cdot n^2}
\end{align*}
if $z_1\neq \pm z_2$.

Recall that $E$ is the elliptic curve defined by the equation $y^2=x^3-15G_4x$. Given $T_1$ and $T_2$ two non-trivial $n$-torsion points on the curve $E$, there are $z_1$ and $z_2$ as before such that $\varphi(z_1)=T_1$ and $\varphi(z_2)=T_2$. If $T_1\neq \pm T_2$, then we obtain
\[
\abs{x(T_1)-x(T_2)}=\abs{\wp(z_1)-\wp(z_2)}\geq\frac{1}{1.18\cdot10^{35}\cdot n^2}.
\]
Let $E_1$ be the elliptic curve defined by $y^2=x^3+x$ and so, if $T_1$ and $T_2$ are two non-trivial $n$-torsion points for $E_1$, then, thanks to the work in Lemma \ref{rootsep} and \ref{G4},
\begin{align*}
\abs{x(T_1)-x(T_2)}&\geq\frac{1}{\sqrt{15G_4}\cdot 1.18\cdot 10^{35}\cdot n^2}\\&\geq\frac{1}{\sqrt{128}\cdot 1.18\cdot 10^{35}\cdot n^2}\\&\geq  \frac{1}{1.4 \cdot10^{36}\cdot n^2}.
\end{align*}
Let $T$ be a non-trivial $n$-torsion point on $E_1$. Using again the work in the proof of Lemma \ref{rootsep}, we know that
\[
\abs{x(T)}\leq \frac{\max_{R\in E(\overline{\Q})[n]\setminus \{O\}}\abs{x(R)}}{\sqrt{15G_4}}.
\]
Thanks to Lemma \ref{n2} and \ref{G4} we have
\[
\abs{x(T)}\leq \frac{n^2+53}{1}.
\]
If $T_1$ and $T_2$ are two non-trivial $n$-torsion points in $E_1(\overline{\Q})$ with $T_1\neq \pm T_2$, then
\begin{align*}
\abs{x(T_1)^{-1}-x(T_2)^{-1}}&=\frac{\abs{x(T_1)-x(T_2)}}{\abs{x(T_1)x(T_2)}}\\&\geq \frac 1{1.4\cdot10^{36}\cdot n^2(n^2+53)^2}\\&\geq \frac{1}{2.5\cdot 10^{36}\cdot n^6}.
\end{align*}
The last inequality holds only if $n\geq 13$. Here we assumed $x(T_1)x(T_2)\neq 0$. For the cases $2\leq n\leq 13$ we prove the lemma computing effectively the constant $K_n$ using the command "polroots" of PARI/GP. For example, the roots of $\psi_3(x)$ when $a=1$ are $\pm 0.0.3933\dots$ and $\pm 1.46789\dots$. So, $K_3\sim 0.7866$ and then
\[
K_3\geq 0.75\geq \frac{1}{2.5\cdot 10^{36}\cdot 3^6}.
\]
\end{proof}
Thanks to Lemma \ref{2p-2} and \ref{split} we have that $\psi_p$ is reducible. Therefore, we would like to apply Proposition \ref{Bmk}. In order to do so, we need to verify the hypothesis of the proposition. For this reason, we prove the following two lemmas.
\begin{lemma}\label{mp+2}
	Fix $p\geq 13$ a prime congruent to $1$ modulo $4$ and let $P$ be a non-torsion point in $E_a(\Q)$.
	Suppose $m\geq p+2$. Then, 
	\[
	2m^2\geq \frac{2C+\log2\abs{a}^{1/2}-\log K_p}{\hat{h}(P)}.
	\]
\end{lemma}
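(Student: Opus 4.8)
The plan is to reduce to the single case $m=p+2$ and then run a ``quadratic beats logarithm'' argument. Since $2m^{2}$ is increasing in $m$ while nothing on the right-hand side depends on $m$, and since $\hat h(P)>0$, it suffices to show
\[
2(p+2)^{2}\,\hat h(P)\ \ge\ 2C+\log\bigl(2\abs{a}^{1/2}\bigr)-\log K_{p}.
\]
First I would unwind the right-hand side: by the definition of $C$ in Lemma~\ref{C} one has $2C=0.52+\tfrac12\log\abs{a}$, and $\log\bigl(2\abs{a}^{1/2}\bigr)=\log 2+\tfrac12\log\abs{a}$, so the right-hand side equals $(0.52+\log 2)+\log\abs{a}-\log K_{p}$. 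Then I would bound $-\log K_{p}$ using Lemma~\ref{Kk}: from $K_{p}\ge(2.5\cdot 10^{36}p^{6})^{-1}$ we get $-\log K_{p}\le\log(2.5\cdot 10^{36})+6\log p$. Hence it is enough to prove
\[
2(p+2)^{2}\,\hat h(P)\ \ge\ c_{0}+\log\abs{a}+6\log p,\qquad c_{0}:=0.52+\log 2+\log(2.5\cdot 10^{36}).
\]

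To handle the $\log\abs{a}$ on the right I would split on the size of $\abs{a}$. For $\abs{a}$ large I would use Lemma~\ref{h}, $\hat h(P)\ge(\log\abs{a}-\log 4)/16$, which renders the $\log\abs{a}$ term harmless as soon as $(p+2)^{2}>8$ and leaves an inequality of the form $\alpha(p+2)^{2}\ge c_{0}+6\log p$ with $\alpha>0$ absolute. For $\abs{a}$ bounded, say $\abs{a}\le 100$, I would instead use Lemma~\ref{100}, or simply $\hat h(P)\ge 1/10$ from Lemma~\ref{min}, so that the right-hand side becomes a fixed constant plus $6\log p$ and the estimate reduces to $\tfrac15(p+2)^{2}\ge c_{0}+\log 100+6\log p$. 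In either regime the left-hand side is quadratic in $p$ and the right-hand side only logarithmic, so the inequality holds for every prime $p\equiv 1\pmod 4$ above an explicit, moderate threshold.

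It then remains to dispose of the finitely many primes $p\equiv 1\pmod 4$ with $p\ge 13$ lying below that threshold (namely $p=13$ and $p=17$, with possibly a couple more depending on the precise constants chosen). For these the estimate of Lemma~\ref{Kk} is hopelessly lossy — already $\log(2.5\cdot 10^{36})\approx 84$, whereas $2(p+2)^{2}$ is only a few hundred — so the displayed inequality simply fails with that bound. Here I would instead compute $K_{p}$ exactly with the ``polroots'' command of PARI/GP, exactly as is done for $2\le n\le 13$ in the proof of Lemma~\ref{Kk}; the true value of $K_{p}$ is of size comparable to $p^{-2}$, so $-\log K_{p}$ is small, and the required inequality follows at once from $\hat h(P)\ge 1/10$ together with Lemmas~\ref{h} and~\ref{100} (splitting once more on $\abs{a}$ if needed). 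The main obstacle is exactly this: the bound on $K_{p}$ supplied by Lemma~\ref{Kk} is far too weak for small $p$, so the quadratic-versus-logarithmic mechanism only kicks in once $p$ is moderately large, and the small primes genuinely have to be handled by an explicit numerical computation of the root-separation constant.
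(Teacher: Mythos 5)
Your proposal is correct and follows essentially the same route as the paper: reduce to $m=p+2$, bound $-\log K_p$ by $\log(2.5\cdot 10^{36})+6\log p$ via Lemma~\ref{Kk}, control $\log\abs{a}$ against $\hat h(P)$ using Lemmas~\ref{h}, \ref{100} and \ref{min}, and let the quadratic left-hand side beat the logarithmic right-hand side for $p$ above an explicit threshold. The paper likewise treats exactly $p=13$ and $p=17$ separately by computing $K_p$ numerically with PARI/GP (obtaining $K_{13}\geq 0.04$), which matches your observation that the generic bound on $K_p$ is too lossy for these small primes.
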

\begin{proof}
	Using Lemma \ref{C} and the bound of $K_p$ in Lemma \ref{Kk},
	\[
	\frac{2C+\log2\abs{a}^{1/2}-\log K_p}{\hat{h}(P)}\leq \frac{6\log p}{\hat{h}(P)}+\frac{\log \abs{a}+85.1}{\hat{h}(P)}.
	\]
	Thanks to Lemma \ref{h}, \ref{100} and \ref{min} we have
	\[
	\frac{6\log p}{\hat{h}(P)}+\frac{\log \abs{a}+85.1}{\hat{h}(P)}\leq \frac{3\log p}{5}+1039.7.
	\]
	Observe that, for $p\geq 23$,
	\[
	2(p+2)^2\geq \frac{3\log p}{5}+1039.7.
	\]
	Therefore,
	\[
	2m^2\geq2(p+2)^2\geq \frac{3\log p}{5}+1039.7\geq
	 \frac{2C+\log2\abs{a}^{1/2}-\log K_p}{\hat{h}(P)}.
	\]
	If $p=13$, then computing $K_{13}$ with PARI/GP we have $K_p\geq 0.04$. So,
	\[
	\frac{2C+\log2\abs{a}^{1/2}-\log K_p}{\hat{h}(P)}\leq \frac{\log \abs{a}+4.5}{\hat{h}(P)}
	\]
	and using the usual inequalities
	\[
	\frac{\log \abs{a}+4.5}{\hat{h}(P)}\leq 63.
	\]
	Therefore,
	\[
	2m^2\geq 2(p+2)^2=450\geq 63\geq \frac{2C+\log2\abs{a}^{1/2}-\log K_p}{\hat{h}(P)}.
	\]
	The case $p=17$ is analogous.
\end{proof}
\begin{lemma}\label{0.8}
	Let $n$ be an integer such that the smallest divisor of $n$ is $p\geq 13$. Hence,
\[
\frac{0.8}{p} \leq \frac{2p-2}{p^{2}}-\rho(n).
\]
\end{lemma}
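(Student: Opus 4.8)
The plan is a direct elementary estimate, with one boundary case treated separately. Since the smallest divisor of $n$ exceeding $1$ is $p$, every prime divisor of $n$ is $\geq p$, hence is either $p$ itself or a prime strictly larger than $p$. Consequently
\[
\rho(n)=\sum_{\substack{q\mid n\\ q\text{ prime}}}\frac1{q^2}\leq \frac1{p^2}+\sum_{\substack{q>p\\ q\text{ prime}}}\frac1{q^2}\leq \frac1{p^2}+\sum_{k\geq p+1}\frac1{k^2}.
\]
So it suffices to bound the tail sum and check the resulting inequality against $0.8/p$.

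First I would dispose of all primes $p\geq 17$. Bounding the tail by an integral, $\sum_{k\geq p+1}1/k^2\leq \int_p^\infty dx/x^2=1/p$, which gives $\rho(n)\leq 1/p^2+1/p$ and therefore
\[
\frac{2p-2}{p^2}-\rho(n)\geq \frac{2p-3}{p^2}-\frac1p=\frac{p-3}{p^2}.
\]
It then remains only to observe that $\frac{p-3}{p^2}\geq \frac{0.8}{p}$ is equivalent to $0.2\,p\geq 3$, which holds for every $p\geq 15$, in particular for all primes $p\geq 17$.

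The remaining case $p=13$ is where the crude integral bound is not quite tight enough, so it has to be handled by hand: here one needs $\rho(n)\leq \frac{24}{169}-\frac{0.8}{13}=\frac{13.6}{169}$. Since the prime divisors of $n$ now lie in $\{13\}\cup\{q\text{ prime}:q\geq 17\}$, I would write
\[
\rho(n)\leq \frac1{169}+\frac1{289}+\sum_{k\geq 18}\frac1{k^2}\leq \frac1{169}+\frac1{289}+\frac1{17},
\]
and a short numerical check shows the right-hand side is less than $\frac{13.6}{169}$, finishing the proof.

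The only mild subtlety — the ``hard part'', such as it is — is that one cannot apply the single integral bound uniformly all the way down to $p=13$; isolating that one value and starting its tail estimate from $17$ rather than from $14$ is exactly what makes the inequality go through. Everything else is routine arithmetic.
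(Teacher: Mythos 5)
Your proof is correct and follows essentially the same route as the paper: bound $\rho(n)$ by $1/p^2$ plus an integral estimate of the tail over integers larger than $p$. The only difference is that the paper starts the tail at $p+2$ (the next prime divisor after the odd prime $p$ is at least $p+2$), so the integral bound $1/(p+1)$ suffices uniformly for all $p\geq 13$ in one step, whereas your cruder bound $1/p$ forces the separate (correctly executed) treatment of $p=13$.
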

\begin{proof}
Since the smallest prime divisor of $n$ is $p$,
\begin{align*}
\frac{2p-2}{p^{2}}-\rho(n)
& \geq \frac{2p-2}{p^{2}}-\frac{1}{p^{2}}-\sum_{n \geq p+2} \frac{1}{n^{2}} \\
& \geq \frac{2p-3}{p^{2}}-\int_{p+1} \frac{dx}{x^{2}} \\
&= \frac{2p-3}{p^{2}}-\frac{1}{p+1} \\
&\geq \frac{0.8}{p},
\end{align*}
where the last inequality holds since $p \geq 13$.
\end{proof}
Now, we are ready to prove Theorem \ref{thm2}. Fix a prime $p\equiv 1 \mod{4}$. We want to use Proposition \ref{Bmk}. We know that $\psi_p$ is reducible and the hypothesis of the proposition is satisfied thanks to Lemmas \ref{mp+2} and \ref{0.8}. So, if $B_{mp}$ does not have a primitive divisor, then inequality (\ref{2mk}) must hold. Hence, in order to prove the theorem, we will show that this inequality does not hold, using the bound on $K_n$.
\begin{proof}[Proof of Theorem \ref{thm2}]
	Suppose now that $n=mp$ is a square-free positive integer with $p \equiv 1 \bmod 4$
	and $m$ such that the smallest divisor of $n$ is larger than $p$. We want to
	show that $B_{n}$ always has a primitive divisor. So we may assume that $n$ is odd, and since $n$ is square-free, it follows that $m \geq p+2$. Furthermore, the case of $p=5$ is handled by Theorem \ref{Teo} here, so we can assume that $p \geq 13$. We will assume that $B_{n}$ does not have a primitive divisor and obtain a contradiction.
	
	Since $p$ is odd, from Lemmas \ref{res} and \ref{gn}, we have
	\[
	g_{p}\mid \left( 2\Delta^{2} \right)^{\frac{p^{2}-1}{4}}
	=\left( 2^{13}a^{6} \right)^{\frac{p^{2}-1}{4}}.
	\]
	Thanks to Lemma \ref{2p-2} and \ref{split}, we know that $\psi_p$ is reducible. 
	Under our conditions here, Lemma \ref{mp+2} holds and hence we can apply Proposition \ref{Bmk},
	along with the above upper bound for $g_{p}$, to obtain
	\begin{eqnarray}
	\label{eq:10}
	2(mp)^{2} \left( \frac{d}{p^{2}} - \rho(mp) \right) \hat{h}(P)
	& \leq & \frac{p^{2}-1}{4} \log \left( 2^{13} a^{6} \right) - d \log \left( \frac{K_{p}|a|^{-1/2}}{2} \right) \nonumber \\
	&      & +2dC+2\log(mp)+2C\omega(mp).
	\end{eqnarray}
	Here we can take $d=2p-2$, thanks to Lemma \ref{2p-2}.
	Also, from Lemma \ref{Kk}, we find that the left-hand side of \eqref{eq:10} is at most
	\begin{align*}
	&\frac{p^{2}-1}{4}\log \left( (2^{13}|a|^{6})  \right) + 4(p-1)C+2(\log(n)
	\\+& C\omega(n))+2(p-1) \log \Big( 5\cdot 10^{36}p^{6} \sqrt{a} \Big).
	\end{align*}
	It will help us in what follows to express the right-hand side of this inequality
	solely in terms of $a$, $m$ and $p$. To do so, we will collect together like
	terms. Since $\omega(n) \leq \log(mp)/\log(p)=\log(m)/\log(p)+1 \leq \log(m)/\log(13)+1$
	and by the definition of $C$ in Lemma \ref{C}, we obtain
	\begin{align*}
	2C\omega(n)+2\log(n)
	 \leq& \Big(0.52+\frac{\log |a|}{2}\Big) \left( \frac{\log(m)}{\log(13)}+1 \right)+2\log(m)+2\log(p) \\
	=& \log(m) \left( \frac{0.52}{\log(13)}+2 +\frac{\log|a|}{2\log(13)} \right)\\&+0.52+\frac{\log|a|}{2}+2\log(p),
	\end{align*}
	and
	\begin{eqnarray*}
		&      & 4(p-1)C+\frac{p^{2}-1}{4} \log \left( 2^{13}|a|^{6} \right)+2(p-1) \log \left( 5\cdot 10^{36}p^{6} \sqrt{a} \right) \\
		& \leq & 4(p-1)(0.26+\log |a|/4) + \frac{p^{2}-1}{4} \log \left( 2^{13} \right) + (3/2)\left( p^{2}-1 \right) \log |a| \\
		&      & +12(p-1)\log(p) + (p-1)\log|a| + 2(p-1) \log \left( 5\cdot 10^{36} \right) \\
		&   =  & \log|a| \left( (3/2)\left( p^{2}-1 \right) + 2(p-1) \right) + \frac{p^{2}-1}{4} \log \left( 2^{13} \right)\\
		&      &  + 12(p-1)\log(p) + 2(p-1) \log \left( 5\cdot 10^{36} \right)
		+ 1.04(p-1).
	\end{eqnarray*}
	
	Combining these two inequalities, we find that
	\begin{eqnarray*}
		&      & 2m^{2}p^{2} \hat{h}(P) \left( \frac{2p-2}{p^{2}} - \rho(n) \right)\\
		& \leq & \log(m) \left( 2.21+0.2\log|a| \right)
		+\log|a| \left( (3/2)\left( p^{2}-1 \right) + 2p-3/2 \right) \\
		&      & +2.26\left( p^{2}-1 \right)
		+(12p-10)\log(p) + 170.1(p-1)+0.52.
	\end{eqnarray*}
	
	Since $p\geq 13$, with some basic analysis we obtain
	\begin{align*}
	(3/2)\left( p^{2}-1 \right) + 2p-3/2 &< 1.64p^{2} \text{ and} \\
	2.26\left( p^{2}-1 \right)+(12p-10)\log(p) + 170.1(p-1)+0.52 &< 16.55p^{2}.
	\end{align*}
	Therefore,
	\[
	2m^{2}\Big( \frac{2p-2}{p^{2}} - \rho(n) \Big)\\
	 \leq \frac{\log(m) \left( 2.21+0.2\log|a| \right)
	+p^2(1.64\log|a|+16.55)}{\hat{h}(P)p^2}.
	\]
	Using the inequalities for $\hat{h}(P)$ in Lemmas \ref{h} and \ref{100},
	\[
	\frac{0.2\log|a|+2.21}{\hat{h}(P)} \leq 28.7
	\]
	and
	\[
	\frac{1.64\log|a|+16.55}{\hat{h}(P)} \leq 214.4.
	\]
	
	Therefore, in order for the inequality in (\ref{eq:10}) to hold, we must also have
	\begin{equation}
	\label{eq:11}
	2m^{2}\left( \frac{2p-2}{p^{2}} - \rho(n) \right)
	\leq 28.7\frac{\log(m)}{p^{2}}+214.4.
	\end{equation}
	
	 Using Lemma \ref{0.8}, we obtain the inequality
	\begin{equation}
	\label{m}
	\frac{1.6m^{2}}{p}
	\leq 2m^{2}\left( \frac{2p-2}{p^{2}} - \rho(n) \right)
	\leq 28.7\frac{\log(m)}{p^{2}}+214.4.
	\end{equation}
	
	If we show that this inequality does not hold for some $m$ and $p$, then $B_{mp}$ always has a primitive divisor.
	If we fix $p\geq 13$, then the function
	\[
	\frac{1.6m^2}p-28.7\frac{\log m}{p^2}-214.4
	\]
	is increasing for $m\geq p+2$. Take $p> 129$, then
	\[
	\frac{1.6m^2}p-28.7\frac{\log m}{p^2}-214.4\geq \frac{1.6(p+2)^2}p-28.7\frac{\log(p+2)}{p^2}-214.4\geq 0
	\]
	where the last inequality follows from the assumption $p> 129$. So, there is always a primitive divisor for $p> 129$ and $m\geq p+2$ since (\ref{m}) does not hold. If $m\geq p+34$ and $p\geq 13$, then we have
	\[
		\frac{1.6m^2}p-28.7\frac{\log m}{p^2}-214.4\geq\frac{1.6(p+34)^2}p- 28.7\frac{\log(p+34)}{p^2}-214.4\geq 0.
	\]
	Therefore, it remains to check only finitely many cases in the form $p\cdot m$ with $p\leq  129$ and $p<m<p+34$. Observe that, since $p$ is the smallest divisor of $n$, then in these cases $m$ is prime. Hence, it remains to check the 85 cases where $n=pq$ with $p$ and $q$
	primes satisfying $13\leq p\leq 129$ with $p\equiv 1 \mod{4}$ and $p<q< p+34$. For these cases we substitute the values in (\ref{eq:11}), taking $m=q$ and we obtain 
	\[
	2q^2\Big(\frac{2p-3}{p^2}-\frac{1}{q^2}\Big)\leq 28.7\frac{\log q}{p^2}+214.4.
	\]
	If $n\neq 13\cdot 17$, $13\cdot19$, $13\cdot 23$, $17\cdot 19$, $ 17\cdot 23$, $17\cdot 29$, $17\cdot 31$, $29\cdot 31$, $29\cdot 37$ $37\cdot 41$, $37\cdot 43$, $41\cdot 43$ or $41\cdot 47$ then this inequality does not hold and so $B_n$ has always a primitive divisor. 
	
	Now, we show how to deal with the case $n=13\cdot 17$. Using the command "polroots" of PARI/GP, we can compute the constant $K_{13}$. We have $K_{13}\geq 0.04$. We substitute the value $n=13\cdot 17$ in (\ref{eq:10}) using this bound and we obtain
	\[
	\hat{h}(P)\leq \frac{497+277\ln \abs{a}}{12956}.
	\]
	Using again the usual inequalities on $\hat{h}$, we have that this inequality does not hold and then $B_{13\cdot 17}$ has always a primitive divisor. The other cases are analogous.
\end{proof}
\begin{remark}
	An analogue of Theorem \ref{thm2} can also be obtained for any elliptic curve, $E$, with complex multiplication and for all the primes that split in $\End(E)$. In order to find uniform bounds, it is necessary to have inequalities similar to the inequalities in Lemma \ref{h} and \ref{100}.
\end{remark}

\begin{remark}
	We cannot apply the technique used in the proof of the last theorem in order to prove an analogue for the term in the form $B_p$ with $p\equiv 1\mod 4$. Indeed, substituting $m=1$ in the inequality (\ref{eq:10}) we would like to show that the inequality does not hold. But the LHS grows as $p^2$ and the RHS grows as $p$, so the inequality holds for $p$ large enough. Hence, for $p$ large this method does not work.
\end{remark}

\section*{Acknowledgements}
The author would like to thank the anonymous referee for
their careful reading and numerous suggestions to improve this paper.

\normalsize
\baselineskip=17pt
\bibliographystyle{siam}
\bibliography{biblio}

\begin{thebibliography}{10}

\bibitem{everestward}
{\sc G.~Everest, G.~Mclaren, and T.~Ward}, {\em Primitive divisors of elliptic
  divisibility sequences}, J. Number Theory, 118 (2006), pp.~71--89.

\bibitem{ingram2}
{\sc P.~Ingram}, {\em Elliptic divisibility sequences over certain curves}, J.
  Number Theory, 123 (2007), pp.~473--486.

\bibitem{lmfdb}
{\sc T.~{LMFDB Collaboration}}, {\em The {L}-functions and modular forms
  database}.
\newblock \url{http://www.lmfdb.org}, 2020.

\bibitem{disc}
{\sc H.~Schmidt}, {\em Resultants and discriminants of multiplication
  polynomials for elliptic curves}, J. Number Theory, 149 (2015), pp.~70--91.
\newblock Appendix A by Schmidt and Jung Kyu Canci.

\bibitem{silverman}
{\sc J.~H. Silverman}, {\em Wieferich's criterion and the {$abc$}-conjecture},
  J. Number Theory, 30 (1988), pp.~226--237.

\bibitem{arithmetic}
\leavevmode\vrule height 2pt depth -1.6pt width 23pt, {\em The arithmetic of
  elliptic curves}, vol.~106 of Graduate Texts in Mathematics, Springer,
  Dordrecht, second~ed., 2009.

\bibitem{PARI2}
{\sc {The PARI~Group}}, {\em {PARI/GP version {\tt 2.11.1}}}, Univ. Bordeaux,
  2018.
\newblock available from \url{http://pari.math.u-bordeaux.fr/}.

\bibitem{sagemath}
{\sc {The Sage Developers}}, {\em {S}ageMath, the {S}age {M}athematics
  {S}oftware {S}ystem ({V}ersion 8.2)}, 2018.
\newblock {\tt https://www.sagemath.org}.

\bibitem{yabutavoutier}
{\sc P.~Voutier and M.~Yabuta}, {\em Primitive divisors of certain elliptic
  divisibility sequences}, Acta Arith., 151 (2012), pp.~165--190.

\bibitem{yabutavoutier2}
\leavevmode\vrule height 2pt depth -1.6pt width 23pt, {\em Lang's conjecture
  and sharp height estimates for the elliptic curves {$y^2=x^3+ax$}}, Int. J.
  Number Theory, 9 (2013), pp.~1141--1170.

\end{thebibliography}
	MATTEO VERZOBIO, UNIVERSIT\'A DI PISA, DIPARTIMENTO DI\\ MATEMATICA, LARGO BRUNO PONTECORVO 5, PISA, ITALY\\
\textit{E-mail address}: matteo.verzobio@gmail.com

\end{document}